\numberwithin{equation}{section}
\theoremstyle{plain} 
\newtheorem{Theorem}{Theorem}[section]
\newtheorem{conjecture}[Theorem]{Conjecture} 
\newtheorem{Proposition}[Theorem]{Proposition} 
\newtheorem{Corollary}[Theorem]{Corollary} 
\newtheorem{Lemma}[Theorem]{Lemma}
\theoremstyle{remark}
\theoremstyle{definition} 
\newtheorem{Question}[Theorem]{Question}
\newtheorem{Definition}[Theorem]{Definition}
\newtheorem*{Acknowledgements}{Acknowledgements}
\newtheorem*{Strong Atiyah conjecture}{Strong Atiyah conjecture}
\newtheorem*{Strong Atiyah conjecture(Algebraic Version)}{Strong Atiyah conjecture (Algebraic Version)}
\newtheorem*{Main Theorem}{Main Theorem}
\newcommand{\showcomments}{yes}
\newsavebox{\commentbox}
\DeclareMathOperator{\lcm}{lcm} 
\title{Cocompact cubulations of mixed 3-manifolds}
\author{Joseph Tidmore}
\affil{
Department of Mathematical Sciences\\
University of Wisconsin-Milwaukee\\
Milwaukee, WI 53201, USA\\
email: jtidmore@uwm.edu}
\date{}
\begin{document}

\maketitle

\begin{abstract}
In this paper, we complete the classification of which compact 3-manifolds have a virtually compact special fundamental group by addressing the case of mixed 3-manifolds. A compact aspherical 3-manifold $M$ is mixed if its JSJ decomposition has at least one JSJ torus and at least one hyperbolic block. We show $\pi_1M$ is virtually compact special iff $M$ is chargeless, i.e. each interior Seifert fibered block has a trivial Euler number relative to the fibers of adjacent blocks.
\end{abstract}

\section{Introduction} \label{introduction}

\subsection{Statement of Main Result}

The special cube complexes of Haglund-Wise \cite{hw08} play a key role in the proof of the virtual Haken and virtual fibering conjectures. An important step in proving these conjectures is showing that the fundamental groups of hyperbolic 3-manifolds are virtually compact special i.e., virtually the fundamental group of a compact special cube complex, proved by Wise \cite{w12a} and Agol \cite{ago13} in the cusped hyperbolic and closed cases respectively. The main goal of this paper is to answer Question 9.4 of Aschenbrenner, Friedl, and Wilton in \cite{afw13} which we state below.

\begin{Question} \label{mainquestion}

Let $M$ be a compact, connected, aspherical 3-manifold. For which $M$ is $\pi_1M$ virtually compact special?

\end{Question}

For a geometric manifold $M$ which is not hyperbolic, then $\pi_1M$ is virtually compact special if and only if $M$ admits an $\mathbb{E}^3$, $\mathbb{H}^2 \times \mathbb{R}$, $S^2 \times \mathbb{R}$, or $S^3$ geometry by an observation of Hagen-Przytycki \cite{hp15}. The main result of Hagen-Przytycki \cite{hp15} was answering Question \ref{mainquestion} for graph manifolds, which left only the case when $M$ is a \emph{mixed manifold} unresolved, with mixed defined as follows: Let $M$ be a compact, connected, orientable, irreducible 3-manifold with $\partial{M}$ either empty or a disjoint union of tori. Then $M$ can be cut along tori called \emph{JSJ tori} so that each component is a hyperbolic 3-manifold or a Seifert fibered space. The 3-manifold $M$ is \emph{mixed} if this decomposition has at least one hyperbolic component and at least one JSJ torus. Each component of this \emph{JSJ decomposition} is a \emph{block}. Przytycki-Wise proved mixed manifold groups are virtually special in \cite{pw13} but did not address the issue of compactness. In this paper, we completely answer Question \ref{mainquestion} by studying the mixed manifold case.

The answer to Question 1.1 for mixed manifolds is similar to Hagen-Przytycki's answer for graph manifolds. They showed the obstruction to virtually compact special for nongeometric graph manifold groups is the charge. For a Seifert fibered block $B$ of a mixed manifold $M$ which is \emph{interior}, meaning $B$ does contain a boundary torus and is not adjacent to a hyperbolic block, the charge of $B$ is its Euler number relatively to the $S^1$-fibers of the adjacent blocks. The 3-manifold $M$ is \emph{chargeless} if all its interior Seifert fibered blocks are chargeless.

\begin{Main Theorem}
Let $M$ be a mixed manifold. The following are equivalent:
\begin{enumerate}
\item \label{Mchargeless} $M$ is chargeless.
\item \label{Mcocompact} $\pi_1M$ is virtually the fundamental group of a compact nonpositively curved cube complex.
\item \label{Mspecial} $\pi_1M$ is virtually compact special.
\end{enumerate}
\end{Main Theorem}

Przytycki-Wise in \cite{pw13} demonstrated that virtually special cubulations of the hyperbolic blocks and maximal graph manifold components could be combined using relatively hyperbolic techniques of Hruska-Wise \cite{hw14} to produce virtually special cubulations of mixed manifold groups (without addressing the issue of cocompactness). For chargeless mixed manifolds, we follow a similar strategy taking extra care to assure we preserve cocompactness. We combine the virtually compact special cubulations of Hagen-Przytycki \cite{hp15} for the chargeless graph manifold components with a more tightly constrained variation of Wise's \cite{w12a} virtually compact special cubulations of the hyperbolic blocks to produce virtually compact special cubulations for chargeless mixed manifold groups.

Showing that the fundamental group of a 3-manifold is virtually compact special has a number of consequences. Schreve in \cite{sch14} proved that any virtually compact special group $G$ satisfies the Strong Atiyah Conjecture. For a torsion free group G, the Strong Atiyah Conjecture states that for any space $\widetilde{X}$ with a properly discontinuous and cocompact $G$-action, the $L^2$-Betti numbers of $\widetilde{X}$ are integers.

%Roughly speaking, the Strong Atiyah Conjecture states that for any compact space $X$ with fundamental group $G$, the $L^2$-Betti numbers of the universal cover of $X$ are rational with denominators controlled by the size of the torsion subgroups of $G$. (It is vacuous for $G$ with unbounded torsion.) Thus our main theorem implies:\begin{comment} Run this by Kevin. One option is to just talk about torsion-free case. \end{comment}

\begin{Corollary}
Let $M$ be a chargeless mixed manifold. Then $\pi_1M$ satisfies the Strong Atiyah Conjecture.
\end{Corollary}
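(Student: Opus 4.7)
The plan is to derive this as an immediate consequence of the Main Theorem together with Schreve's theorem cited in the paragraph above. By the Main Theorem, the implication $(\ref{Mchargeless}) \Rightarrow (\ref{Mspecial})$ says that if $M$ is chargeless, then $\pi_1 M$ is virtually compact special. Schreve's result in \cite{sch14} then applies to conclude that $\pi_1 M$ satisfies the Strong Atiyah Conjecture. The only point requiring any remark is that SAC, as stated here, is formulated for torsion-free groups: this hypothesis is automatic since $M$ is a mixed manifold, hence aspherical by definition, so the universal cover $\widetilde{M}$ is contractible and $\pi_1 M$ is torsion-free.

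There is no real obstacle here, since all of the substantive work is carried inside the Main Theorem (whose proof is the body of the paper) and Schreve's theorem. I would write the corollary proof as a single short paragraph: apply the Main Theorem to obtain a virtually compact special model for $\pi_1 M$, note torsion-freeness from asphericity, and then invoke \cite{sch14} to conclude that the $L^2$-Betti numbers of any free, properly discontinuous, cocompact $\pi_1 M$-space are integers.
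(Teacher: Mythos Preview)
Your proposal is correct and matches the paper's approach exactly: the paper does not give a separate proof of this corollary, treating it as an immediate consequence of the Main Theorem together with Schreve's theorem \cite{sch14} cited in the preceding paragraph. Your additional remark about torsion-freeness via asphericity is a reasonable clarification, though the paper leaves even this implicit.
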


%Our main result also gives a second independent proof that chargeless mixed manifold groups are biautomatic.

Niblo and Reeves in \cite{nr97} showed that cocompactly cubulated groups are biautomatic.

\begin{Corollary} \label{biautomatic}
Let $M$ be a chargeless mixed manifold. Then $\pi_1M$ is biautomatic.
\end{Corollary}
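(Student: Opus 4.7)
The plan is to derive this as a short consequence of the Main Theorem together with the cited Niblo--Reeves result. By item (\ref{Mcocompact}) of the Main Theorem, since $M$ is a chargeless mixed manifold, there exists a finite-index subgroup $H \leq \pi_1 M$ which is isomorphic to the fundamental group of a compact nonpositively curved cube complex $X$. Lifting to the universal cover, $H$ acts properly discontinuously and cocompactly by isometries on the CAT(0) cube complex $\widetilde{X}$, so in particular $H$ is cocompactly cubulated in the sense of \cite{nr97}.

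Next, I would apply the Niblo--Reeves theorem directly to $H$ to conclude $H$ is biautomatic. The final step is to upgrade biautomaticity from $H$ to the finite-index overgroup $\pi_1 M$. This uses the standard fact that biautomaticity is preserved under passing to finite-index overgroups (originally due to Mosher; see also Epstein--Cannon--Holt--Levy--Paterson--Thurston), so $\pi_1 M$ itself is biautomatic.

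Since the argument is really just a chaining of black boxes, there is no substantive obstacle: the work has already been done in establishing the Main Theorem. The only mildly delicate point is remembering that one must invoke finite-index stability of biautomaticity \emph{in both directions} (down to $H$ implicitly via the Main Theorem's virtual statement, and then back up from $H$ to $\pi_1 M$), rather than applying Niblo--Reeves directly to $\pi_1 M$ itself, which is not in general the fundamental group of a compact nonpositively curved cube complex.
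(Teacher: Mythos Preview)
Your proof is correct and matches the paper's intended argument: the paper does not write out a formal proof but simply prefaces the corollary with the Niblo--Reeves citation, leaving the reader to combine it with the Main Theorem exactly as you have done. Your explicit mention of the finite-index overgroup step (which the paper glosses over) is a useful clarification rather than a deviation.
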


Corollary \ref{biautomatic} could also be derived from the main result of Rebecchi's thesis \cite{reb01} since $\pi_1M$ is hyperbolic relative to chargeless graph manifold groups which are biautomatic by Hagen-Przytycki \cite{hp15} and Niblo-Reeves \cite{nr97}.

The fundamental group of a mixed manifold $M$ has a natural relatively hyperbolic structure described in Section \ref{review}. Aschenbrenner, Friedl, and Wilton make the following conjecture for fully relatively quasiconvex subgroups (definition \ref{fullyrq}) of $\pi_1M$:

\begin{conjecture}[Conjecture 9.3 of \cite{afw13}] \label{fullyrelqc}

Let $M$ be a mixed manifold with $\pi_1M$ equipped with its natural relatively hyperbolic structure. If $H$ is a fully relatively quasiconvex subgroup of $\pi_1M$ then $H$ is a virtual retract. In particular, $H$ is separable.

\end{conjecture}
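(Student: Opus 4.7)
The strategy is to combine the Main Theorem with Haglund's virtual retract criterion: a subgroup of the fundamental group of a compact special cube complex that acts cocompactly on a convex subcomplex of the universal cover is a virtual retract.

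I would first treat the case when $M$ is chargeless. By the Main Theorem, there is a finite-index subgroup $G_0 \leq \pi_1M$ and a compact special cube complex $X_0$ with $G_0 \cong \pi_1 X_0$. Set $H_0 = H \cap G_0$; then $H_0$ has finite index in $H$ and remains fully relatively quasiconvex with respect to the peripheral structure on $G_0$ induced from that on $\pi_1M$. Next, I would apply the cubical convex core construction of Sageev-Wise and Haglund-Wise to obtain an $H_0$-invariant convex subcomplex $Y_0 \subseteq \widetilde{X_0}$. The crucial step is showing $H_0$ acts cocompactly on $Y_0$: because $H_0$ is \emph{fully} relatively quasiconvex, its intersection with each conjugate of a peripheral subgroup is either trivial or of finite index, and combined with the cocompactness of the $G_0$-action on $\widetilde{X_0}$ this forces cocompactness on the core. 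Haglund's theorem then yields that $H_0$ is a virtual retract of $G_0$, and virtual retraction transfers to finite-index overgroups and back along the finite-index inclusion $H_0 \leq H$, so $H$ is a virtual retract of $\pi_1M$. Separability is then automatic from residual finiteness of 3-manifold groups.

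The principal obstacle is the non-chargeless case, in which the Main Theorem produces only a virtually sparsely (not compactly) special cubulation via Przytycki-Wise. A complete proof would require a cosparse version of Haglund's retract theorem. The guiding heuristic is that full relative quasiconvexity localizes $H$ near cusps: away from the cusps the standard canonical completion of Haglund-Wise applies, while on each cusp $H$ agrees with the peripheral subgroup up to finite index, so one can splice cusp-by-cusp retractions onto the canonical completion of the compact part. Making this precise in the CAT(0) cube complex setting is delicate but is plausibly accessible through the relative Dehn filling techniques of Groves-Manning combined with the relatively hyperbolic cubulation machinery developed by Hruska-Wise \cite{hw14}. The hard part is verifying that canonical completions behave well near the non-cocompact cusps; once this is in place, the chargeless argument above transfers essentially verbatim to the general mixed case.
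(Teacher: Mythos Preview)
This statement is a \emph{conjecture}, and the paper does not claim to prove it in full. The paper only establishes the chargeless case, recorded as Corollary~\ref{fully}, and the argument there is a one-line citation: the Main Theorem gives that $\pi_1M$ is virtually compact special, and then Theorem~5.8 of Chesebro--DeBlois--Wilton \cite{cdw12} immediately yields that every fully relatively quasiconvex subgroup is a virtual retract. There is no further work done in the paper.

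Your chargeless argument is essentially an unpacking of what \cite{cdw12} does, so in that sense it is correct but more elaborate than what the paper needs. One point to be careful about: your sentence ``combined with the cocompactness of the $G_0$-action on $\widetilde{X_0}$ this forces cocompactness on the core'' hides the real content. The peripheral subgroups here are graph-manifold-cluster groups, not virtually abelian cusp groups, so the passage from full relative quasiconvexity to convex-cocompactness in the cube complex is not as immediate as in the classical cusped-hyperbolic setting; this is exactly what \cite{cdw12} handles, and you should cite it rather than reprove it.

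For the non-chargeless case, your proposal is honest speculation, not a proof, and the paper makes no claim here either. Note that the Main Theorem is an equivalence: if $M$ is not chargeless then $\pi_1M$ is \emph{not} virtually compact special, so any approach through compact special cubulations is definitively blocked. Your suggested route through cosparse cubulations and relative Dehn filling is plausible as a research direction, but none of the steps you outline (a cosparse Haglund retract theorem, good behaviour of canonical completions near non-cocompact peripherals) is currently available in the literature, and the conjecture remains open in that case.
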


Theorem 5.8 of Chesebro, DeBlois, and Wilton \cite{cdw12} states any fully relatively quasiconvex subgroup of a relatively hyperbolic, virtually compact special group is a virtual retract. Thus we can partially answer this conjecture.

\begin{Corollary} \label{fully}
Let $M$ be a chargeless mixed manifold. Any fully relatively quasiconvex subgroup of $\pi_1M$ is a virtual retract and, in particular, is separable.
\end{Corollary}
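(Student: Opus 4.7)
The plan is to apply the Main Theorem of the paper in combination with Theorem 5.8 of Chesebro-DeBlois-Wilton \cite{cdw12}. The argument is essentially a direct concatenation of results, so the work is in checking that the hypotheses line up rather than in any new construction.

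First, I would recall the natural relatively hyperbolic structure on $\pi_1 M$ described in Section \ref{review}: $\pi_1 M$ is hyperbolic relative to the collection of fundamental groups of the maximal graph manifold pieces and the cusp subgroups of the hyperbolic blocks. This is a standard structure for mixed manifold groups and it is the one with respect to which the notion of \emph{fully relatively quasiconvex} (Definition \ref{fullyrq}) in the statement is taken.

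Next, since $M$ is chargeless, the Main Theorem applies and yields that $\pi_1 M$ is virtually compact special. In particular, $\pi_1 M$ is a relatively hyperbolic, virtually compact special group. Theorem 5.8 of \cite{cdw12} asserts that in any such group, every fully relatively quasiconvex subgroup is a virtual retract. Applying this directly to the hypothesized subgroup $H \le \pi_1 M$ gives that $H$ is a virtual retract of $\pi_1 M$.

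Finally, to conclude separability, I would invoke the standard observation that any virtual retract of a residually finite group is separable: if $H \le H' \le_{f.i.} G$ with $H$ a retract of $H'$, then $H$ is closed in the profinite topology on $H'$, and since $H'$ has finite index in $G$, $H$ is closed in the profinite topology on $G$. Residual finiteness of $\pi_1 M$ follows from virtual compact specialness (via Haglund-Wise \cite{hw08}), so this step is automatic. There is essentially no substantive obstacle here; the entire content of the corollary is the Main Theorem together with the citation to \cite{cdw12}, and I would keep the written proof to a few lines.
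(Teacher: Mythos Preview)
Your proposal is correct and matches the paper's approach exactly: the paper derives Corollary~\ref{fully} immediately from the Main Theorem together with Theorem~5.8 of \cite{cdw12}, just as you do. One small remark: in the paper's setup (Theorem~\ref{relhyp}) the peripheral subgroups are precisely the graph manifold cluster subgroups---the modified JSJ decomposition inserts thin Seifert fibered blocks so that no separate cusp subgroups appear---but this does not affect the argument.
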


%With the case of a mixed manifold resolved, we can classify virtually compact special 3-manifold groups. 

Combining the main theorem with previously known results allows us to completely answer Question 1.1.

\begin{Theorem} \label{classification}

Let $M$ be a compact, aspherical 3-manifold whose boundary is empty or a disjoint union of tori. Then $\pi_1M$ is virtually compact special iff either of the following holds.
\begin{enumerate}
\item $M$ is geometric and its interior admits one of the following five geometries: $\mathbb{H}^3$, $\mathbb{E}^3$, $\mathbb{H}^2 \times \mathbb{R}$, $S^2 \times \mathbb{R}$, or $S^3$.

%\item $M$ is a hyperbolic 3-manifold with boundary or without boundary.
%\item $M$ is a Seifert fibered manifold with boundary.
%\item $M$ is a closed Seifert fibered manifold whose Euler number vanishes. In particular, $M$ has admits an $\mathbb{R}^3$, $\mathbb{H}^2 \times \mathbb{R}$, or $S^2 \times \mathbb{R}$ structure.
%\item $M$ is a spherical manifold, but only in the trivial sense that $\pi_1M$ is finite.
\item $M$ is nongeometric and chargeless.

\end{enumerate}

\end{Theorem}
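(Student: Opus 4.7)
The plan is to prove Theorem \ref{classification} as a case analysis based on the JSJ decomposition of $M$, stitching together the Main Theorem with the results already assembled in the introduction. By Thurston's geometrization theorem, a compact aspherical 3-manifold with empty or toroidal boundary either is geometric or admits a nontrivial JSJ decomposition; in the latter situation $M$ is either a nongeometric graph manifold (every JSJ block is Seifert fibered) or a mixed manifold (at least one hyperbolic JSJ block). These three alternatives align with the two cases in the statement after a further split of the geometric case into hyperbolic and non-hyperbolic.

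First I would handle the geometric case by splitting across the eight model geometries. If $M$ admits an $\mathbb{H}^3$ geometry, then $\pi_1M$ is virtually compact special by Wise \cite{w12a} in the cusped case and Agol \cite{ago13} in the closed case. For the four other geometries listed in (1), namely $\mathbb{E}^3$, $\mathbb{H}^2 \times \mathbb{R}$, $S^2 \times \mathbb{R}$, and $S^3$, I would cite the Hagen-Przytycki observation \cite{hp15} recalled in Section \ref{introduction}. Because that same observation is an \emph{iff} for non-hyperbolic geometric $M$, it also excludes the remaining geometries $\mathrm{Sol}$, $\mathrm{Nil}$, and $\widetilde{SL_2(\mathbb{R})}$, yielding the only-if direction in the geometric case.

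Next I would dispatch the nongeometric case. If $M$ is a nongeometric graph manifold, the main theorem of Hagen-Przytycki \cite{hp15} is precisely the equivalence between chargelessness and virtual compact specialness of $\pi_1M$. If $M$ is a mixed manifold, this equivalence is the Main Theorem proven in the body of the present paper. Combining the two cases gives (2) in both directions.

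Since the argument is an assembly of previously established theorems, there is no genuinely new obstacle once the Main Theorem is available. The only bookkeeping to verify is that the dichotomy geometric/nongeometric, together with the further subdivisions into $\mathbb{H}^3$, other-geometry, graph-manifold, and mixed, exhausts all compact aspherical 3-manifolds with toroidal or empty boundary; this follows from the JSJ decomposition together with geometrization. The one subtlety worth flagging is that the definition of ``chargeless'' for mixed manifolds (in terms of interior Seifert fibered blocks) specializes correctly to the Hagen-Przytycki notion when $M$ happens to be a graph manifold, so that the statement in (2) is unambiguous across both nongeometric subcases.
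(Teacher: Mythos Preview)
Your proposal is correct and follows essentially the same route as the paper: split into geometric versus nongeometric via geometrization, handle $\mathbb{H}^3$ by Wise/Agol, the remaining geometries by the Hagen--Przytycki observation, and the nongeometric cases by \cite{hp15} for graph manifolds and the Main Theorem for mixed manifolds. The only difference is cosmetic: the paper's proof unpacks the Hagen--Przytycki observation in the geometric case (spelling out why Sol, Nil, and $\widetilde{SL}(2,\mathbb{R})$ are excluded via the solvable subgroup theorem and the CAT(0) flat torus theorem) rather than citing it as a black box.
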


\subsection{Outline for the proof of the main theorem}

In the main theorem, the implication $(\ref{Mspecial}) \implies (\ref{Mcocompact})$ is obvious. That $(\ref{Mcocompact}) \implies (\ref{Mchargeless})$ will be an application of Hagen-Przytycki \cite{hp15} and Theorem 7.12 of Hruska-Wise \cite{hw14}. Most of the work in this paper is proving $(\ref{Mchargeless}) \implies (\ref{Mspecial})$. Our strategy is to construct a collection of surfaces immersed in a chargeless mixed manifold $M$ and study the induced action of $\pi_1M$ on the dual $CAT(0)$ cube complex to show that $\pi_1M$ is virtually compact special. The construction of the dual cube complex, due to the Sageev, takes as input a collection of immersed codimension-$1$ surfaces in a 3-manifold $M$ and yields a $CAT(0)$ cube complex $\widetilde{X}$ dual this collection of surfaces together with an action of $\pi_1M$ on $\widetilde{X}$. Combinatorial features of the collection of immersed surfaces lead to various finiteness properties of the action of $\pi_1M$ on $\widetilde{X}$ such as proper, cocompact, special, etc.

Przytycki-Wise \cite{pw13} proved mixed manifold groups are virtually special (without addressing cocompactness) by combining collections of immersed surfaces due to Przytycki-Wise \cite{pw14} inducing virtually special cubulations of the graph manifold components and surfaces due to Wise \cite{w12a} inducing virtually compact special cubulations of the hyperbolic blocks to produce a certain collection of immersed surfaces in a mixed manifold. They then study the action on the dual cube complex using a theorem of Hruska-Wise \cite{hw14} to prove the action is proper with a virtually special quotient.

%Przytycki-Wise \cite{pw13} used Sageev's construction with surfaces built by combining the virtually special cubulations of graph manifolds with boundary of Przytycki-Wise \cite{pw14} and the virtually compact special cubulations of cusped hyperbolic 3-manifolds of Wise \cite{w12a}. 

In general, the surfaces constructed by Przytycki-Wise do not provide a cocompact cubulation. One reason why is that a proper and cocompact actions requires additional constraints on how the surfaces intersect the JSJ and boundary tori. To see this, first consider a well-known example of Sageev's construction: Suppose we have a collection of closed curves in a torus $T$. The \emph{slope} of a closed curve $\gamma$ in $T$ is the commensurablility class of $\langle \gamma \rangle$ in $\pi_1T$. For a collection of closed curves with $n$ distinct slopes, the dual cube complex is $\mathbb{R}^n$ tessellated by $n$-cubes. Thus $\pi_1T = \mathbb{Z} \times \mathbb{Z}$ acts properly when $n \geq 2$ and acts cocompactly when $n \leq 2$. The action is proper and cocompact iff $n = 2$. Prztycki-Wise \cite{pw13} chose surfaces independently in the graph manifold components and hyperoblic blocks guaranteeing at least two slopes of curves in each JSJ torus $T$, but not exactly two since the slopes contributed by each block containing $T$ might not match.

For a chargeless mixed manifold $M$, we use in each graph manifold component the surfaces used by Hagen-Przytycki \cite{hp15} to obtain a virtually compact special cubulation of a chargeless graph manifold. The surfaces in the graph manifold components put a \emph{framing} on each JSJ torus contained in a graph manifold component, i.e. a choice of two slopes. We then add surfaces to the hyperbolic blocks whose boundary curves intersect the JSJ tori in the slopes that come from the framing. Our more tightly constriained variation of Wise's virtually compact special cubulation for cusped hyperbolic 3-manifold groups \cite{w12a} provides a collection of surfaces that induces a virtually compact special cubulation and that is true to any given framing of the boundary tori.

\begin{Theorem} \label{HyperbolicCube}

Let $N$ be a hyperbolic 3-manifold whose boundary is nonempty and a union of framed tori, $\partial{N} = T_1 \cup \cdots \cup T_k$. In each $T_i$ choose simple closed curves $C_i$ and $D_i$ whose slopes are those given by the framing. There is a finite collection $\mathcal{S}$ of surfaces properly immersed in $N$ which are geometrically finite and in general position so that $\pi_1N$ acts freely, properly, and cocompactly on the cube complex complex $\widetilde{X}$ dual to $\mathcal{S}$ and $X = \widetilde{X} / \pi_1N$ is virtually compact special. Further, if $H \subset X$ is an immersed hyperplane of $X$ and a conjugate of $\pi_1H \leq \pi_1X = \pi_1N$ intersects some $\pi_1T_i$ then that intersection lies in either $\pi_1C_i$ or $\pi_1D_i$.

\end{Theorem}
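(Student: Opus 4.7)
The plan is to refine Wise's virtually compact special cubulation \cite{w12a} of the cusped hyperbolic 3-manifold group $\pi_1 N$ so that every surface in the resulting finite collection has all boundary curves in the framed slopes $\{C_i, D_i\}_{i=1}^k$. I would begin from a finite collection $\mathcal{S}_0$ of geometrically finite immersed surfaces in $N$, as produced by Wise, which cubulates $\pi_1 N$ properly and cocompactly with virtually compact special quotient but whose boundary slopes are arbitrary. For each surface $S \in \mathcal{S}_0$ and each boundary component $\gamma$ of $S$ on a torus $T_i$ of slope other than $C_i$ or $D_i$, I would use the identity $[\gamma] = p[C_i] + q[D_i]$ in $H_1(T_i;\mathbb{Z})$ to produce an embedded compact cobordism surface $F_\gamma \subset T_i \times I$ from $\gamma$ to a disjoint union of $|p|$ parallel copies of $C_i$ together with $|q|$ parallel copies of $D_i$. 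Pushing $F_\gamma$ slightly into a horotorus deep inside the cusp and gluing it to $S$ along $\gamma$, then smoothing and repeating for every offending boundary curve, replaces each $S$ by a new immersed surface $\tilde S$ whose boundary slopes on each $T_i$ all lie in $\{C_i, D_i\}$.

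Next I would verify that the modified finite collection $\mathcal{S} = \{\tilde S : S \in \mathcal{S}_0\}$ retains the key properties. Geometric finiteness of $\pi_1 \tilde S$ follows because the attached cobordisms sit entirely inside the cusp regions and so do not alter the limit set of $\pi_1 \tilde S \le \pi_1 N$; general position is achieved by small perturbation. Sageev's dual cube complex $\widetilde X$ to $\mathcal{S}$ inherits a free, proper $\pi_1 N$-action since the walls coming from $\mathcal{S}$ still separate every pair of distinct endpoints of loxodromic axes in $\partial \mathbb{H}^3$ (the separation used in Wise's properness argument is untouched by cusp modifications), and $\pi_1 N$ is torsion-free. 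Cocompactness is the heart of the refinement: because each peripheral $\pi_1 T_i \cong \mathbb{Z}^2$ now sees exactly the two slopes $C_i, D_i$ in the wall system, it acts cocompactly on the subcomplex of walls it stabilizes (as $\mathbb{Z}^2$ acts on the standard square tessellation of $\mathbb{R}^2$). Combined with properness, this peripheral cocompactness upgrades to global cocompactness of the $\pi_1 N$-action on $\widetilde X$ via the relatively hyperbolic cubulation criterion of Hruska-Wise (Theorem 7.12 of \cite{hw14}); Wise's malnormal special quotient theorem applied in the relatively hyperbolic setting then gives virtual specialness of $X = \widetilde X / \pi_1 N$.

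The slope condition on immersed hyperplanes of $X$ is then automatic: the stabilizer of such a hyperplane is a conjugate of some $\pi_1 \tilde S$, and its intersection with a conjugate of $\pi_1 T_i$ is generated by a boundary curve of $\tilde S$ on $T_i$, which by construction has slope $C_i$ or $D_i$. The main obstacle I anticipate is the first step: ensuring the cusp modification preserves $\pi_1$-injectivity and relative quasiconvexity of each modified surface subgroup. While the topological existence of $F_\gamma$ is elementary, one must verify that gluing $F_\gamma$ deep inside the cusp yields a new surface whose lift to $\mathbb{H}^3$ remains properly embedded and whose stabilizer introduces no spurious parabolic elements outside $\pi_1 C_i \cup \pi_1 D_i$. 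Placing each cobordism at sufficient depth in the cusp, where the hyperbolic metric is arbitrarily close to Euclidean on large horotori, should reduce these checks to routine horocyclic geometry, while the relatively hyperbolic Dehn filling machinery underlying Wise's virtual specialness argument adapts to the modified collection without substantive change.
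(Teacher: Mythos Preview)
Your cusp-modification step has a fatal flaw: the modified surfaces $\tilde S$ are almost never $\pi_1$-injective. If $\gamma$ has slope $p[C_i]+q[D_i]$ with both $p,q\neq 0$, then the cobordism $F_\gamma\subset T_i\times I$ is a compact surface with at least three boundary components, so $\pi_1 F_\gamma$ is free of rank $\ge 2$; but its image in $\pi_1 N$ lies in $\pi_1 T_i\cong\mathbb Z^2$ and is therefore abelian. Hence $F_\gamma\to N$ is not $\pi_1$-injective, and neither is $\tilde S=S\cup_\gamma F_\gamma$. Concretely, a copy of $C_i$ and a copy of $D_i$ coming from the same $F_\gamma$ generate a free subgroup of $\pi_1\tilde S$ whose image in $\pi_1 N$ is a rank-$2$ abelian group, so the commutator is in the kernel. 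Without $\pi_1$-injectivity the elevations of $\tilde S$ to $\widetilde N$ are not embedded planes, so they do not define walls and Sageev's construction does not apply. Even setting that aside, the curve $\gamma$ survives in $\tilde S$ as an interior curve homotopic into $T_i$ but not homotopic in $\tilde S$ to any boundary curve of slope $C_i$ or $D_i$; this is an accidental parabolic of slope $\gamma$, so the hyperplane stabilizer would meet a conjugate of $\pi_1 T_i$ in the wrong slope, violating the very conclusion you want.

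The paper avoids this by working at a different level: rather than post-processing the surfaces, it reopens Wise's proof that $\pi_1 N$ is virtually compact special and controls the two slopes there. Proposition~\ref{stop} produces an embedded geometrically finite surface whose boundary on each $T_i$ has slope $C_i$; this surface is used to split $\pi_1 N$ as the graph of groups underlying Wise's Theorem~16.28. The second slope enters only when Wise cubulates the expanded edge groups $\pi_1 E^+$ by attaching a square torus along the edge-group curve, and at that moment one is free to choose the transverse $1$-cube to be $D_i$. The collection $\mathcal S$ of Theorem~\ref{hypsurfaces} is then obtained from hyperplanes of this tailored cube complex, so its surfaces are $\pi_1$-injective, geometrically finite, free of accidental parabolics, and true to the framing by construction. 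In short, the slope control must be built into Wise's cubulation from the start; it cannot be grafted on afterward by topological surgery in the cusps.
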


In Section \ref{setup} we define a \emph{frame efficient collection} of surfaces reducing the proof of the implication $(\ref{Mchargeless}) \implies (\ref{Mspecial})$ in the main theorem to two steps.

\begin{Proposition} \label{step1}

Let $M$ be a chargeless mixed manifold. Then $M$ admits a frame efficient collection of surfaces.

\end{Proposition}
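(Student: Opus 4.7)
The plan is to assemble a frame efficient collection piecewise, treating the graph manifold components and hyperbolic blocks separately and arranging them to agree along the shared JSJ tori. First I would identify the maximal graph manifold components $G_1, \ldots, G_r$ of $M$. Since $M$ is chargeless, each $G_j$ is itself chargeless as a graph manifold. Applying Hagen--Przytycki \cite{hp15} to each $G_j$ yields a finite collection of properly immersed surfaces whose dual cube complex admits a free, proper, cocompact, virtually compact special action of $\pi_1 G_j$. A crucial feature of that cubulation is that the boundary curves of these surfaces on each JSJ torus $T$ of $G_j$ lie in exactly two slopes; these are the framing slopes for every JSJ torus of $M$ adjacent to a graph manifold component, and for every component of $\partial M$ meeting some $G_j$.

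On the remaining boundary tori of the hyperbolic blocks of $M$ -- namely JSJ tori between two hyperbolic blocks, and components of $\partial M$ meeting only a hyperbolic block -- I would pick any pair of transverse slopes as the framing, making the same choice on both sides of a shared JSJ torus. Now every boundary torus of every hyperbolic block $N$ of $M$ is framed, and the framings on JSJ tori shared between two blocks agree by construction. Applying Theorem \ref{HyperbolicCube} to each hyperbolic block $N$ produces a finite collection of properly immersed, geometrically finite, general position surfaces so that $\pi_1 N$ acts freely, properly, and cocompactly on the dual cube complex with virtually compact special quotient, and with boundary curves in $\partial N$ realizing the chosen framing slopes.

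The candidate collection $\mathcal{S}$ is the union of all these surfaces, after a small generic perturbation to put them in general position throughout $M$. By construction, on each JSJ torus the curves contributed by the two sides lie in a common pair of slopes, so no extra slopes are introduced at the interfaces. The main obstacle will be verifying that $\mathcal{S}$ satisfies each clause of the definition of frame efficient from Section \ref{setup}: beyond proper immersion, geometric finiteness, and general position (each of which is preserved block by block), the delicate part is checking that the slopes on the two sides of each JSJ torus match in exactly the way the definition demands, not merely as abstract slopes but in a compatible product neighborhood of the torus, so that the block-by-block cubulations can later be shown to assemble into a virtually compact special cubulation of $\pi_1 M$. This compatibility is forced by our use of a single framing on both sides, but making it rigorous -- together with arranging that each hyperbolic-block framing can be realized by curves $C_i, D_i$ as in the hypothesis of Theorem \ref{HyperbolicCube} -- is where the technical content of the proof lies.
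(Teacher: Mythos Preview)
Your proposal has a genuine gap: the surfaces you produce are not properly immersed in $M$. The efficient collection in a graph manifold cluster $N$ consists of surfaces properly immersed in $N$, and the surfaces from Theorem~\ref{HyperbolicCube} are properly immersed in the hyperbolic block; but many of these surfaces have boundary components lying on \emph{transitional} tori, which are interior JSJ tori of $M$. Simply taking the union of these block-level collections (and perturbing) yields surfaces with boundary in the interior of $M$, violating the very first requirement in Definition~\ref{efficient}. Matching slopes on both sides of a transitional torus does not by itself produce a properly immersed surface in $M$; one must actually \emph{glue} the pieces together along those boundary curves.

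This gluing is exactly where the work lies, and the paper devotes Section~\ref{cubulating} to it. Two obstacles arise: the boundary curves on either side of a transitional torus may cover their common image with different degrees, and the two sides may contribute different numbers of boundary components of a given slope. The paper handles the first via Lemma~\ref{degree} (pass to finite covers so all boundary curves have a uniform degree) and the second by taking carefully chosen multiples of the surfaces together with the ``capping off'' surfaces of Proposition~\ref{cap} in the hyperbolic blocks, so that multiplicities match and every boundary component gets attached. Only after this assembly does one have properly immersed incompressible surfaces in $M$, and one must then verify Definition~\ref{efficient}(\ref{cut})--(\ref{verticalaccidental}), including the Strong Separation property, for the glued surfaces. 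A smaller point: in the paper's modified JSJ decomposition every JSJ torus adjacent to a hyperbolic block is transitional and every boundary torus of $M$ lies in a Seifert fibered block, so there are no ``JSJ tori between two hyperbolic blocks'' or boundary tori meeting only a hyperbolic block to frame independently.
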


\begin{Proposition} \label{step2}

Let $M$ be a mixed manifold admitting a frame efficient collection of surfaces $\mathbf{S}$. Then $\pi_1M$ is virtually compact special.

\end{Proposition}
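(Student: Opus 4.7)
The plan is to run Sageev's dual cube complex construction on the frame efficient collection $\mathbf{S}$ and then verify, in order, the three properties we need: properness, cocompactness, and virtual specialness of the induced $\pi_1M$-action on the dual cube complex $\widetilde{X}$. The natural framework is relatively hyperbolic cubulation, because $\pi_1M$ is hyperbolic relative to the fundamental groups of its maximal graph manifold components together with peripheral tori, and frame efficiency is designed precisely so that the dual cube complex fits the hypotheses of Hruska-Wise \cite{hw14}.

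First I would use the relatively hyperbolic version of Sageev's construction (as developed in Hruska-Wise \cite{hw14}) to show the action $\pi_1M \curvearrowright \widetilde{X}$ is proper. The inputs are that each surface in $\mathbf{S}$ coming from a hyperbolic block is geometrically finite by Theorem \ref{HyperbolicCube}, and that the restrictions of $\mathbf{S}$ to each graph manifold component form the Hagen-Przytycki \cite{hp15} collection, which is relatively quasiconvex relative to the JSJ tori. Combining these, $\mathbf{S}$ is a relatively quasiconvex collection in $\pi_1M$ that separates pairs of points, and Hruska-Wise's criterion then yields a proper action on $\widetilde{X}$.

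The heart of the argument, and the place where frame efficiency is essential, is cocompactness. Here I would appeal to Theorem 7.12 of Hruska-Wise \cite{hw14}: the quotient $X = \widetilde{X}/\pi_1M$ is compact provided the action on the peripheral cosets is cocompact. By the $\mathbb{Z}^2$-model example recalled in the Introduction, the action of a peripheral torus subgroup $\pi_1T \cong \mathbb{Z}^2$ on its associated sub-cube-complex is cocompact precisely when exactly two commensurability classes of slopes appear on $T$. Frame efficiency enforces exactly this: the graph manifold surfaces put a framing on each JSJ torus $T$, and by the framing clause in Theorem \ref{HyperbolicCube} the hyperbolic block surfaces cross $T$ only in those same two framing slopes. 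So for every peripheral torus (JSJ or boundary), precisely two slopes arise, the relative cocompactness hypothesis of Hruska-Wise is satisfied, and $X$ is compact. This is the step I expect to be the main obstacle, since matching the two sides of each JSJ torus slope-by-slope is exactly what fails for the Przytycki-Wise \cite{pw13} surfaces.

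Finally, for virtual specialness, I would apply the relatively hyperbolic special quotient theorem: $\pi_1M$ is hyperbolic relative to virtually compact special subgroups (the chargeless graph manifold components by Hagen-Przytycki \cite{hp15}, and peripheral $\mathbb{Z}^2$'s), the action on $\widetilde{X}$ is proper and cocompact by the previous steps, and the walls are relatively quasiconvex. Under these hypotheses Hruska-Wise combined with Agol \cite{ago13} and Wise \cite{w12a} give a finite cover of $X$ that is special; one passes to a further finite regular cover so that $\pi_1M$ itself is virtually compact special. This completes the three implications needed to deduce the proposition.
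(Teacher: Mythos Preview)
Your outline correctly identifies the three properties to verify and that Theorem~\ref{relcpt} is the right framework. Properness and virtual specialness are indeed inherited from Przytycki--Wise \cite{pw13}, since a frame efficient collection satisfies conditions (1)--(5) of Definition~\ref{efficient}; the paper simply cites this.

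The gap is in your cocompactness step. The peripheral subgroups of $\pi_1M$ in its natural relatively hyperbolic structure (Theorem~\ref{relhyp}) are the fundamental groups of the \emph{graph manifold clusters} $N_i$, not of individual JSJ or boundary tori. Theorem~\ref{relcpt} therefore reduces cocompactness of $\pi_1M \curvearrowright \widetilde{X}$ to cocompactness of each $P_i = \pi_1 N_i$ on its associated convex subcomplex $C(\widetilde{N}_i)$, and establishing this for \emph{thick} clusters is the entire content of the paper's proof. Your two-slope argument for $\mathbb{Z}^2$ covers at best the thin cluster case. For a thick cluster one must analyze an arbitrary collection $\mathcal{V}$ of pairwise crossing walls in the $\widetilde{N}_i$-wallspace and produce a uniform $L$-center in $\widetilde{N}_i$ so that Lemma~\ref{center} applies. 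The paper does this by (i) arranging via Proposition~\ref{push} and criterion~(\ref{verticalaccidental}) that every wall in the $\widetilde{N}_i$-wallspace actually meets $\widetilde{N}_i$ (accidental parabolics would otherwise contribute walls that do not), (ii) using the Helly property for the JSJ tree to find a single Seifert block $\widetilde{B} \subset \widetilde{N}_i$ meeting every wall of $\mathcal{V}$, (iii) invoking Lemma~\ref{close} so that the pieces in $\widetilde{B}$ are pairwise $R$-close, and (iv) exploiting the product structure $\widetilde{B}\cong E\times\mathbb{R}$ together with criteria~(\ref{turbine}) and~(\ref{coveringembedding}) of frame efficiency to locate the center: Lemma~\ref{HyperbolicCenter} on the hyperbolic base $E$ handles the vertical pieces, and the disjointness of horizontal pieces bounds their number and hence their spread along the fiber. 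The two-slopes-per-torus observation is a necessary consequence of frame efficiency, as the paper remarks after Definition~\ref{efficient}, but it is not by itself a proof of peripheral cocompactness for thick graph manifold clusters.
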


%We will construct surfaces in our efficient collection from smaller surfaces in the blocks of $M$. The surfaces in the Seifert fibered blocks will be from \cite{hp15}. We construct surfaces in hyperbolic blocks using a generalization of Wise's process for cusped hyperbolic 3-manifolds.

$\mathbf{Organization.}$ Section \ref{setup} establishes notation, gives an overview of the cubulating techniques used in this paper, and gives a proof of the implication $(\ref{Mcocompact}) \implies (\ref{Mchargeless})$ of the main theorem. Section \ref{graph} describe a collection of surfaces immersed in a chargeless graph manifold constructed by Hagen-Przytycki \cite{hp15}, called an efficient collection. Section \ref{hyperbolic} constructs the surfaces we use in the hyperbolic blocks and proves Theorem \ref{HyperbolicCube}. Section \ref{cubulating} constructs a frame efficient collection of surfaces in a chargeless mixed manifold proving Proposition \ref{step1}. Section \ref{mainproof} proves Proposition \ref{step2}, completing the proof of the main theorem. Section \ref{finale} proves Theorem \ref{classification}, classifying virtually compact special 3-manifold groups.

\begin{Acknowledgements}

I would like to think my adviser Chris Hruska for all his help and advice throughout this paper. I would also like to think Boris Okun and Kevin Schreve for helpful clarifications on the Strong Atiyah conjecture.

\end{Acknowledgements}

\section{Background} \label{setup}

We need several tools to prove the main theorem. Section \ref{3mfld} provides some background in 3-manifold theory. Section \ref{efficient} defines a frame efficient collection of surfaces in a mixed manifold. Section \ref{review} describes a natural relatively hyperbolic structure of mixed manifold groups and a key result of \cite{hw14} for cubulating relatively hyperbolic groups.

\subsection{3-Manifold Background} \label{3mfld}

Here we describe two decompositions for mixed manifolds and present some background in 3-manifold theory. A good reference for many of the results here is \cite{afw13}.

\textbf{Modified JSJ decomposition.} We first describe the classical JSJ composition. Let $M$ be a compact connected oriented irreducible 3-manifold whose boundary is either empty or a disjoint union of tori. The 3-manifold $M$ has a unique, up to isotopy, minimal collection of incompressible tori which are not $\partial$-parallel called \emph{JSJ tori} such that when $M$ is cut open along these tori each component of the cut-open space, called a \emph{block} of $M$, is either atoroidal or admits a Seifert fibered structure.  The 3-manifold $M$ is a \emph{mixed manifold} if it has at least one JSJ torus and at least one atoroidal block. When $M$ has at least one JSJ torus, Thurston's hyperbolization tells us each atoroidal block of $M$ admits a hyperbolic structure. We refer to the blocks of our mixed manifold as hyperbolic blocks and Seifert fibered blocks as appropriate.

Let $M_0$ denote the space obtained by cutting $M$ along all the JSJ tori. Each JSJ torus $T \subset M$ is the preimage of two distinct components $T_1$ and $T_2$ of $\partial{M_0}$. If $B_1$ and $B_2$ are the blocks containing $T_1$ and $T_2$ respectively we say $B_1$ and $B_2$ each \emph{contain $T$}. We also say $B_1$ and $B_2$ are \emph{adjacent via $T$}. Note it is possible that $B_1 = B_2$ so a block can be adjacent to itself via some JSJ torus.

As in \cite{pw13} we modify the above the decomposition in a way that is useful for mixed manifolds. Suppose $T$ is either a JSJ or boundary torus of $M$ and is not contained in any Seifert fibered block. Choose a parallel copy of $T$ in $M$ and call it a JSJ torus also. The product region $T \times I$ bounded $T$ and this parallel copy has many possible Seifert fibered structures and we call it a \emph{thin} Seifert fibered block. Throughout this paper when we consider a mixed manifold we will refer to this modified decomposition as its JSJ decomposition.

\textbf{Transitional decomposition.} We also use another decomposition from \cite{pw13} for mixed manifolds. A JSJ torus $T$ of a mixed manifold $M$ is a \emph{transitional torus} if it is contained in at least one hyperbolic block. Cutting $M$ along the transitional tori gives us its \emph{transitional decomposition} with each component being either a hyperbolic block of $M$ or a graph manifold which we call a \emph{graph manifold cluster} of $M$. If a graph manifold cluster $N$ of $M$ consists of just a thin Seifert fibered block, $N$ is \emph{thin}. The other graph manifold clusters are \emph{thick}. We define adjacency for components of the transitional decomposition in an analogous way to that of blocks of the JSJ decomposition.  Note the transitional decomposition is bipartite in that a hyperbolic block is only adjacent to graph manifold clusters and vice versa.

\textbf{Elevation.} Let $\phi \colon N \to M$ be a map between manifolds of any dimension and $\widehat{M} \to M$ a covering space. A map $\widehat{\phi} \colon \widehat{N} \to \widehat{M}$, with $\widehat{N}$ a cover of $N$, is an \emph{elevation} of $\phi$ if $\widehat{\phi}$ covers $\phi$ and does not factor through any intermediate cover of $N$. %We will consider elevations of the blocks of our mixed manifold and JSJ tori of $M$ as well as elevations of surfaces immersed in $M$.

The elevations of the JSJ, boundary, and transitional tori of $M$ to its universal cover $\widetilde{M}$ are \emph{JSJ}, \emph{boundary}, and \emph{transitional planes} respectively. Similarly, the elevations of the hyperbolic and Seifert fibered blocks are hyperbolic and Seifert fibered blocks of $\widetilde{M}$ respectively. The elevations of graph manifold clusters to $\widetilde{M}$ are graph manifold clusters of $\widetilde{M}$. This last case is a slight abuse of notation since these elevations are not compact.  In our notation, a graph manifold cluster of $\widetilde{M}$ is a connected subspace of $\widetilde{M}$ which covers a graph manifold cluster of $M$.

\textbf{Properties of Immersed Surfaces.} An connected, immersed (embedded) surface $\phi \colon S \to M$ in a 3-manifold $M$ is \emph{properly immersed (embedded)} if $\phi^{-1} (\partial M) = \partial S$. An immersed surface $\phi \colon S \to M$ which is not a $2$-sphere is \emph{immersed incompressible} if $\phi$ is $\pi_1$-injective and elevates to an embedding in $\widetilde{M}$.

\textbf{Pieces of Surfaces.} If $B$ is a hyperbolic (Seifert fibered) block of $M$, any restriction of $\phi$ to a component of $\phi^{-1}(B)$ is a \emph{hyperbolic (Seifert fibered) piece of $S$ in $B$}. For a graph manifold cluster $N$ of $M$, we call a component of $\phi^{-1}(N)$ a \emph{piece cluster}.

%For a block $B$, either hyperbolic or Seifert fibered, of $M$, any restriction of $\phi$ to a component of $\phi^{-1}(B)$ is a \emph{piece of $S$ in $B$}. For a graph manifold cluster $N$ of $M$, we call a component of $\phi^{-1}(N)$ a \emph{piece cluster}.

\textbf{Accidental parabolic.} Let $S \to M$ be a properly immersed surface and $C$ a closed curve in $S$. Suppose the image of $C$ in $M$ is freely homotopic in $M$ to a curve lying in some transitional torus $T$. Then $C$ is an \emph{accidental parabolic} if there is a homotopy of $S \to M$ so that both the following hold: The image of $C$ in $M$ lies in the interior of a hyperbolic block, and $C$ is not freely homotopic in $S$ to a curve that maps into $T$.

\textbf{Chargeless.} Rather than defining the notion of the charge of a Seifert fibered block, we instead discuss only the notion of chargeless blocks. This condition concerns \emph{interior} Seifert fibered blocks of $M$, i.e. those that neither contain a boundary torus of $M$ nor are adjacent to a hyperbolic block.

\begin{Definition}\label{chargeless}
Let $M$ be a compact, oriented, irreducible 3-manifold with at least one JSJ torus and whose boundary is either empty or a union of tori. An interior Seifert fibered block $B$ of $M$ is \emph{chargeless} if the following holds:

Let $T_1$, ... ,$T_k$ be the JSJ tori contained in $B$. For each $T_i$, let $B'_i$ denote the Seifert fibered block adjacent to $B$ via $T_i$ and $Z_i$ a circle in $T_i$ which is a fiber of $B'_i$. Let $[Z_i]$ denote class of $Z_i$ in $H_1(B;\mathbb{Z})$. $B$ is \emph{chargeless} if we can assign nonzero integers $n_1$, ... , $n_k$ so that

\[ \sum\limits_{i=1}^{k} n_{i} [Z_i] = 0 \text{ in } H_1(B;\mathbb{Z}). \]

We say $M$ is \emph{chargeless} if every interior Seifert fibered block of $M$ is chargeless.

\end{Definition}

We will later see the cycle $\sum\limits_{i=1}^{k} n_{i} [Z_i] = 0 \text{ in } H_1(B;\mathbb{Z})$ bounds an embedded horizontal surface in $B$. An analogous property holds for all hyperbolic blocks as a consequence of Theorem \ref{stop} proved later. In the latter case the cycle $\sum\limits_{i=1}^{k} n_{i} [Z_i] = 0 \text{ in } H_1(B;\mathbb{Z})$ bounds a geometrically finite surface. This plays a key role in the construction of a frame efficient collection of surfaces.

We need a few facts about hyperbolic 3-manifolds and Seifert fibered spaces.

\textbf{Geometrically Finite.} Suppose $G$ is a Kleinian group i.e., a discrete subgroup of $PSL(2, \mathbb{C}) = Isom^{+}(\mathbb{H}^3)$. Consider the spherical boundary $S^2_{\infty}$ of $\mathbb{H}^3$. For any $x \in \mathbb{H}^3$, the \emph{limit set of $G$}, denoted by $\Lambda_G$, is the set of all accumulation points of $Gx$ in $S^2_{\infty}$. Note this is independent of the choice of $x \in \mathbb{H}^3$. Let $C(\Lambda_G)$ denote the convex hull of $\Lambda_G$ in $\mathbb{H}^3 \cup S^2_{\infty}$. Then $G$ is \emph{geometrically finite} if there is an $\epsilon > 0$ so that $\mathcal{N}_{\epsilon} (C(\Lambda_G \cap \mathbb{H}^3) / G$ has finite volume. An immersed incompressible surface $S$ in a hyperbolic 3-manifold $N$ is \emph{geometrically finite} if $\pi_1S \leq \pi_1N \leq Isom^{+}(\mathbb{H}^3)$ is geometrically finite.

\textbf{Horizontal and Vertical.} A surface immersed in a Seifert fibered space is \emph{horizontal} if it only has transverse intersections with the $S^1$-fibers. It is \emph{vertical} if it a union of $S^1$-fibers. Hass showed in \cite{has84} that every immersed (embedded) surface in a Seifert fibered space is a homotopic to either a horizontal or vertical immersion (embedding). Rubinstein and Wang applied this to show that a surface immersed in a graph manifold can be homotoped so that each piece is either horizontal or vertical in the Seifert fibered block it maps into. (Lemma 3.3 of \cite{rw98}.) We will assume the piece clusters of any surface we consider to be homotoped into this form.

\subsection{Frame Efficient Collection} \label{DefineEfficient}

The goal of this subsection is to define the notion of a frame efficient collection of properly immersed surfaces. For this section, let $M$ be a mixed manifold and $\widetilde{M}$ its universal cover.

First, we describe how a collection of surfaces gives $\widetilde{M}$ the structure of a wallspace in the sense of Haglund-Paulin. Suppose $\mathbf{S}$ is a collection of properly immersed incompressible surfaces in a mixed manifold $M$. Let $\widetilde{\mathbf{S}}$ denote the collection of all elevations of surfaces in $\mathbf{S}$ to $\widetilde{M}$. Each $\widetilde{S} \in \mathbf{\widetilde{S}}$ is a \emph{wall} in $\widetilde{M}$ meaning that cutting $\widetilde{M}$ along $\widetilde{S}$ decomposes it into two halfspaces $U$ and $V$. Thus $\widetilde{\mathbf{S}}$ endows $\widetilde{M}$ with a Haglund-Paulin wallspace structure $( \widetilde{M}, \widetilde{\mathbf{S}} )$. (We follow the more flexible treatment in \cite{hw14} where $U \cap V$ can be nonempty.)

The $CAT(0)$ cube complex $\widetilde{X}$ dual to the wallspace $( \widetilde{M}, \widetilde{\mathbf{S}} )$ was first constructed by Sageev \cite{sag95}. The action of $\pi_1M$ on $\widetilde{M}$ preserves the wallspace structure inducing an action of $\pi_1M$ on the dual cube complex. We do not describe Sageev's construction here. For background see e.g. Hruska-Wise \cite{hw14} which gives a self-contained account similar to the treatment in this paper. We need to highlight some key properties. A \emph{midcube} of an $n$-cube $[ -1, 1 ]^n$ is a subspace obtained by restricting one of its coordinates to $0$. A \emph{hyperplane} $\widetilde{H}$ of $\widetilde{X}$ is a connected subspace which intersects each cube of $\widetilde{X}$ in either a midcube or the empty set. Each wall $\widetilde{S} \in \mathbf{\widetilde{S}}$ is associated to a unique hyperplane $\widetilde{H}$ of $\widetilde{X}$ and $\widetilde{H}$ has the property that $stab(\widetilde{S}) = stab(\widetilde{H})$ implying $\pi_1S \leq \pi_1M$ is a finite index subgroup of $stab(\widetilde{H})$.

The statement of Theorem \ref{HyperbolicCube} uses the notion of an \emph{immersed hyperplane} in a nonpositively curve cube complex $X$ so we define this as well. Given a hyperplane $\widetilde{H}$ in $\widetilde{X}$, the universal cover of $X$, with $K = stab(\widetilde{H}) \leq \pi_1X$, the induced map $H = \widetilde{H} / K \to X$ is an \emph{immersed hyperplane} of $X$. Note that $H \to X$ is a local isometry and hence $\pi_1$-injective.

A cube in a cube complex is \emph{maximal} if it is not a proper subset of another cube. If the dual $CAT(0)$ cube complex $\widetilde{X}$ is finite-dimensional and we consider a maximal collection of pairwise crossing (i.e., intersecting) walls then the collection of hyperplanes associated to those walls is a maximal collection of pairwise crossing hyperplanes. Further, each maximal cube of $\widetilde{X}$ corresponds to a unique maximal collection of pairwise crossing hyperplanes. In fact, they cross in that cube. Thus each maximal collection of pairwise crossing walls corresponds to the unique maximal cube of $\widetilde{X}$. A common strategy for proving a group acts cocompactly on a dual cube complex is to show there are finitely many orbits of collections of pairwise crossing walls.

To define a frame efficient collection of surfaces we need some terminology.

\textbf{Cut-surface.} An \emph{axis} for a nontrivial element $g \in \pi_1M$ acting on $\widetilde{M}$ is a copy of $\mathbb{R}$ in $\widetilde{M}$ on which $g$ acts by nontrivial translation. A \emph{cut-surface} for $g \in \pi_1M$ is an immersed incompressible surface $S \to M$ covered by $\widetilde{S} \subset \widetilde{M}$ such that there is an axis $\mathbb{R}$ for $g$ satisfying $\widetilde{S} \cap \mathbb{R} = \{0\}$, where the intersection is transverse.

The existence of cut-surfaces is important for proper actions. Suppose $\mathbf{S}$ is a collection of properly immersed incompressible surfaces containing a cut-surface for every nontrivial element of $\pi_1M$. If the dual cube complex of $\mathbf{S}$ is finite-dimensional, then $\pi_1M$ acts freely and properly on it. (See Theorem 5.5 of \cite{hw14}.)

\textbf{Strong Separation.} Finally, we need to define the \emph{Strong Separation} property for a collection of properly immersed surfaces $\mathbf{S}$, which is Definition 2.2 of \cite{pw13}. Equip $M$ with a Riemannian metric and lift it to $\widetilde{M}$. Let $\widetilde{\mathbf{S}}$ be the collection of all elevations of surfaces in $\mathbf{S}$ to $\widetilde{M}$. Then $\mathbf{S}$ satisfies the \emph{Strong Separation} property if there exists $D > 0$ so that the following hold:

\begin{enumerate}

\item Suppose $\widetilde{S}$, $\widetilde{S}' \in \mathbf{\widetilde{S}}$ both intersect a hyperbolic block $\widetilde{N}$. If there is no JSJ plane contained in $\widetilde{N}$ intersecting both $\widetilde{S}$ and $\widetilde{S}'$ and $\widetilde{S} \cap \widetilde{N}$ and $\widetilde{S}' \cap \widetilde{N}$ are distance $\geq D$ from each other then there is a surface in $\mathbf{\widetilde{S}}$ which separates $\widetilde{S}$ from $\widetilde{S}'$.

\item Suppose $\widetilde{S}$, $\widetilde{S}' \in \mathbf{\widetilde{S}}$ both intersect a graph manifold cluster $\widetilde{N}'$. If $\widetilde{S} \cap \widetilde{N}'$ and $\widetilde{S}' \cap \widetilde{N}'$ are distance $\geq D$ from each other, then there is a surface from $\mathbf{\widetilde{S}}$ which separates $\widetilde{S}$ from $\widetilde{S}'$.

\end{enumerate}

Note whether or not a collection of surfaces satisfies the Strong Separation property is independent of the Riemannian metric we chose.

\begin{Definition} \label{efficient}

Let $M$ be a mixed manifold and $\mathbf{S}$ a finite collection of properly immersed incompressible surfaces in $M$ which are in general position. Choose any Seifert fibration of the thin graph manifold clusters of $M$. Then $\mathbf{S}$ is a \emph{frame efficient collection} if all of the following hold:

\begin{enumerate}
\item \label{cut} Each nontrivial element of $\pi_1M$ has a cut-surface in $\mathbf{S}$.

\item \label{jsj} All JSJ tori belong to $\mathbf{S}$.

\item \label{vembedded} For any piece cluster $S_0 \subset S$, the map $S_0 \to N$ into a graph manifold cluster is a virtual embedding for all $S \in \mathbf{S}$.

\item \label{geometricallyfinite} Each hyperbolic piece of $S$ is geometrically finite for all $S \in \mathbf{S}$.

\item \label{strongseparation} The collection $\mathbf{S}$ satisfies the Strong Separation property.

%\item For $S \in \mathbf{S}$ and a block $N_i$, if $S_p$ is a piece of $S$ in $N_i$, then there is at most one Seifert fibered block $B$ of $N_i$ so that $S \cap B$ contains a horizontal piece.

\item \label{turbine} Two horizontal Seifert fibered pieces of a surface $S \in \mathbf{S}$ cannot be directly attached in the following sense: Suppose $B$ is a Seifert fibered block of $M$ and a piece of $S_0 \subset S$ is immersed horizontally in $B$. If $B'$ is a Seifert block adjacent to $B$ via a JSJ torus $T$, then each component of $S_0 \cap T$ is an $S^1$-fiber of $B'$.

%then $S_O$ factors through an embedding into $B$. Further, if $B'$ is a Seifert fibered block adjacent to $B$ via a JSJ torus $T$, then each component of $S_O \cap B'$ is a circle fiber of $B'$.

\item \label{coveringembedding} Let $B$ be a Seifert fibered block. The images of pieces of surfaces immersed horizontally in $B$ do not intersect one another. Further, each piece immersed horizontally in $B$ maps into $B$ via the composition of a covering map between surfaces and an embedding into $B$.

%For any Seifert fibered block $B$, any two distinct horizontal pieces in $B$ are disjoint. Further, let $\widetilde{B}$ denote the universal cover of $B$. If $S \in \mathbf{S}$ contains a piece that a horizontal in $B$, then the elevations of that piece are all disjoint.

\item \label{verticalaccidental} The accidental parabolics are all vertical in the following sense: Suppose $S \in \mathbf{S}$ contains an accidental parabolic $C \subset S$ freely homotopic into a transitional torus $T$. Let $B$ denote the Seifert fibered block adjacent to $T$. Then the image of $C$ is freely homotopic to a fiber of $B$.

\end{enumerate}
\end{Definition}

Przytycki-Wise showed a mixed manifold $M$ admits a collection of properly immersed incompressible surfaces satisfying Definition \ref{efficient}(1)-(5) (Theorem 2.1 of \cite{pw13}) and that $\pi_1M$ acts freely on the dual $CAT(0)$ cube complex of any such collection with a virtually special quotient.  Criteria (\ref{turbine}) and (\ref{coveringembedding}) are motivated by the collected of surfaces used by Hagen-Przytycki in \cite{hp15}, which they call an efficient collection. They had embedded horizontal pieces, but for technical reasons that emerge in Section \ref{cubulating} where we construct a frame efficient collection we need our weaker criterion (\ref{coveringembedding}). The significance of criterion (\ref{verticalaccidental}) emerges in the following discussion.

\textbf{Two slopes in each torus.} Implicit in Definition \ref{efficient} is that the surfaces of a frame efficient intersect each JSJ and boundary torus $T$ in a collection of closed curves with exactly two slopes. Definition \ref{efficient}(\ref{cut}) implies there are at least two slopes in order to provide cut-surfaces for the elements of these tori subgroups. Definition \ref{efficient}(\ref{coveringembedding}) implies there are at most two since, if $B$ is the Seifert fibered block containing $T$, the pieces of surfaces immersed horizontally in $B$ all intersect $T$ in curves of the same slope. The second slope is the slope of an $S^1$-fiber of $B$. When studying cocompactness of the action on the dual cube complex, accidental parabolics homotopic into $T$ behave similar to curves in $T$. Definition \ref{efficient}(8) is stronger than necessary since we only need to ensure the accidental parabolics do not add new slopes to $T$, but requiring that they are vertical in $B$ simplifies the proof of Proposition \ref{step2}.

Being aware of the necessary condition that surfaces intersects the JSJ and boundary tori in two slopes of curves is key to understanding our methods when we later construct a frame efficient collection, but we never use this condition explicitly when proving the main theorem. A full explanation of why this condition is necessary requires understanding how a subspace of $M$ can be associated to a convex subcomplex of the dual cube complex of a collection of surfaces, which Hruska-Wise describe in \cite{hw14}. Since we never need to explicitly use this condition, we do not give the full explanation here.

%Although we need a key result of \cite{hw14} to prove Proposition \ref{step2}, we never need to explicitly verify the slope condition, so we do not give a full explanation of why it is necessary and instead refer the reader to Hruska-Wise \cite{hw14}.

%\textbf{Slopes in the tori.} Given a collection $\mathbf{S}$ of surfaces immersed in a mixed manifold $M$ which are in general position, the surfaces of $\mathbf{S}$ intersect each JSJ and boundary torus in a collection of a closed curves. The \emph{slope} of a closed curve $\gamma$ in a torus $T$ is the commensurability class of $\langle \gamma \rangle$ in $\pi_1T$. For $\pi_1M$ to act properly and cocompactly on the dual cube complex associated to $\mathbf{S}$, it is necessary for the surfaces of $\mathbf{S}$ to intersect each JSJ or boundary torus $T$ in a collection of curves with exactly two slopes. Definition \ref{efficient}(\ref{coveringembedding}) implies this condition holds for a frame efficient collection since, if $B$ is a Seifert fibered block containing $T$, the horizontal pieces in $B$ intersect $T$ in curves of the same slope. The other slope comes from surfaces whose pieces in $B$ are vertical. When studying cocompactness of the action on the dual cube complex, accidental parabolics act similar to curves in the tori. Definition \ref{efficient}(8) is stronger than necessary since we only need to ensure the accidental parabolics do not add new slopes to the tori, but requiring that they are vertical simplifies the proof of Proposition \ref{step2}.

\textbf{Constructing a frame efficient collection.} To construct a frame efficient collection we modify the strategy of Przytycki-Wise \cite{pw13} for constructing a collection of surfaces satisfying Definition \ref{efficient}(1)-(5). They first chose surfaces immersed in the hyperbolic blocks and graph manifold clusters. These surfaces were not all properly immersed in $M$, since some of them had boundary components lying in transtional tori. To extend these surfaces to be properly immersed in $M$, they first added extra surfaces in the hyperbolic blocks and graph manifold clusters with the slopes of their boundary curves chosen so that they match the slopes of boundary curves from surfaces in adjacent blocks. They then used  these extra surfaces to ``cap off'' the boundary curves of surfaces in adjacent blocks and obtain surfaces properly immersed in their mixed manifold. The result of this strategy is a virtually special cubulation but not, in general, a cocompact cubulation since we could have as many as four slopes in a JSJ or boundary torus.

To construct a frame efficient collection we first add to each graph manifold cluster the efficient collection of surfaces used in Hagen-Przytycki \cite{hp15}, described in Section \ref{graph}. These surfaces add exactly two slopes to each transitional torus. The boundary curves from surfaces in the efficient collections equip each hyperbolic block $N$ with a \emph{framing} in the sense defined below.

\begin{Definition} \label{frame}

Let $N$ be a compact 3-manifold whose boundary is a nonempty union of tori $\partial{N} = T_1 \cdots T_k$. A \emph{framing} of $N$ is a choice in each $T_i$ of two nonhomotopic simple closed curves $C_i$ and $D_i$. (Alternatively, we could choose a pair of slopes in each $T_i$.) If a framing for $N$ is chosen, then $N$ is \emph{framed}. 

A collection of properly immersed surfaces $\mathcal{S}$ in $N$ is \emph{true to a $\{ C_i , D_i \}$-framing} (or \emph{true to the framing} if the framing has already been specified) if, for each $S \in \mathcal{S}$ and each $T_i$, every component of $\partial{S}$ immersed in $T_i$ has the same slope as either $C_i$ or $D_i$.

\end{Definition}

In Section \ref{hyperbolic}, we prove that a hyperbolic block $N$ with any framing admits a collection of geometrically finite surfaces that is true to the framing and that provides a cut-surface for every nontrivial element of $\pi_1N$. To prove Proposition \ref{step1}, we construct in Section \ref{cubulating} a frame efficient collection by attaching surfaces from the efficient collections in the graph manifold clusters to surfaces in the hyperbolic blocks that are true to the framings induced by the efficient collections in the graph manifold clusters.

%%%%%%%%%%%%%%%%%%%%%%%%%%%%%%%%%%%%%%%%%%5

\subsection{Relatively Hyperbolic Groups and Cube Complexes} \label{review}

In this section we describe a natural relatively hyperbolic structure on the fundamental group of a mixed manifold $M$ and its role in finding cocompact cubulations. We also prove Theorem \ref{CocompactImpliesChargeless} which proves the implication in the main theorem that $\pi_1M$ being virtually cocompactly cubulated implies $M$ is chargeless.

Gromov originally introduced the notion of a relatively hyperbolic group in \cite{gro87}.  The definition we give here is due to Bowditch \cite{bow12}. For finitely generated groups, it is equivalent to Gromov's definition. (See \cite{hru10} for more on the various definitions for relatively hyperbolic.)

\begin{Definition}[Definition 2 of \cite{bow12}] \label{relhyp}

Suppose $G$ is a group acting on a connected hyperbolic graph $\Gamma$. Suppose the following all hold.
\begin{enumerate}

\item $\Gamma$ is $\delta$-hyperbolic.
\item For any positive integer $n$, each edge of $\Gamma$ lies in only finitely many circuits of length $n$, a circuit being a closed path which does not repeat any vertices.
\item There are only finitely many $G$-orbits of edges and each edge stabilizer is finite.
\item Each vertex stabilizer is finitely generated.
\end{enumerate}

Let $\mathbb{P}$ be a collection of subgroups consisting on one representative from each conjugacy class of infinite vertex stabilizers. We say $G$ is \emph{hyperbolic relative to $\mathbb{P}$}. The subgroups of $\mathbb{P}$ and their conjugates are the \emph{peripheral subgroups} of $G$. (In other words, the infinite vertex stabilizer of $G$ are its peripheral subgroups.) We call $\Gamma$ a \emph{$(G$, $\mathbb{P})$-graph}.

%Let $\mathcal{P}$ denote the collection of infinite vertex stabilizer of $G$. Then $G$ is \emph{hyperbolic relative to $\mathcal{P}$}. We refer to the clusters of $\mathcal{P}$ as \emph{peripheral subgroups} of $G$.%

\end{Definition}

We will not appeal directly to the definition of relatively hyperbolic, but we make use of a natural relatively hyperbolic structure on mixed manifold groups first described by Dru{\c{t}}u-Sapir in \cite{ds05}. To prove mixed manifolds have this relatively hyperbolic structure, Dru{\c{t}}u-Sapir used highly intricate techniques combining their results with a result of Kapovich-Leeb \cite{kl95}. Kapovich-Leeb showed the asymptotic cone of a mixed manifold group is tree-graded and Dru{\c{t}}u-Sapir showed asymptotically tree graded groups are relatively hyperbolic. For an elementary proof of the theorem below using the $(G , \mathbb{P})$-graph definition of relatively hyperbolic, see \cite{bw13}.

\begin{Theorem}[Dru{\c{t}}u-Sapir] \label{relhyp}

Let $M$ be a mixed manifold and let $N_1 , ... , N_k$ denote the graph manifold clusters of $M$. For each $N_i$ choose a conjugate $P_i$ of $\pi_1N_i$ sitting inside $\pi_1M$. The group $\pi_1M$ is hyperbolic relative to $\{ P_i \}$.

\end{Theorem}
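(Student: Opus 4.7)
The plan is to verify Bowditch's definition of relative hyperbolicity directly by constructing an explicit $(\pi_1 M, \{P_i\})$-graph. The natural starting point is the Bass-Serre tree $T$ dual to the transitional decomposition: $T$ is bipartite, with one vertex per elevation of a hyperbolic block to $\widetilde{M}$ and one vertex per graph manifold cluster of $\widetilde{M}$, and with edges corresponding to transitional planes. Under the $\pi_1 M$-action, edge stabilizers are conjugates of the transitional torus subgroups $\mathbb{Z}^2$, graph-manifold vertex stabilizers are exactly the conjugates of the $P_i$, and hyperbolic-block vertex stabilizers are conjugates of $\pi_1 B_j$ for the hyperbolic blocks $B_j$.

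This tree is not itself a $(\pi_1 M, \{P_i\})$-graph, because the hyperbolic-block vertex stabilizers are infinite but are not among the $P_i$. To fix this, I would blow up each hyperbolic-block vertex using the relatively hyperbolic structure of the block. By Thurston's theorem each hyperbolic block $B$ is a finite-volume cusped hyperbolic 3-manifold, so $\pi_1 B$ is hyperbolic relative to its peripheral $\mathbb{Z}^2$ subgroups, and hence admits a $(\pi_1 B, \{\mathbb{Z}^2\})$-graph $\Gamma_B$ in the sense of Definition \ref{relhyp}. Form a new graph $\Gamma$ by replacing each hyperbolic-block vertex $v \in T$ with an equivariant copy of $\Gamma_{B_v}$, identifying each edge of $T$ incident to $v$ with the unique peripheral vertex of $\Gamma_{B_v}$ whose stabilizer is the corresponding transitional torus subgroup. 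The action of $\pi_1 M$ on $T$ lifts to an action on $\Gamma$.

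It remains to verify the four conditions of Definition \ref{relhyp} for $\Gamma$. Finitely many orbits of edges and finite edge stabilizers follow directly from the corresponding properties of $T$ and of each $\Gamma_B$, since $M$ has finitely many JSJ blocks. The vertex stabilizers of $\Gamma$ are finite (from interior vertices of some $\Gamma_B$), conjugates of $\mathbb{Z}^2$ (from peripheral vertices of some $\Gamma_B$, equivalently edge vertices of $T$), or conjugates of $P_i$; all are finitely generated, and the infinite ones are the $P_i$ together with the transitional torus subgroups. The latter are absorbed, up to conjugacy, into the graph-manifold-cluster stabilizers $P_i$ because every transitional torus bounds a Seifert-fibered block of some graph manifold cluster, so in $\pi_1 M$ the infinite vertex stabilizers are exactly the conjugates of $\{P_i\}$.

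The main obstacle is establishing that $\Gamma$ is $\delta$-hyperbolic and fine; everything else is bookkeeping. For hyperbolicity, $\Gamma$ is a tree of hyperbolic pieces (copies of $\Gamma_B$ and single graph-manifold vertices) glued along uniformly quasiconvex subsets, and standard combination-theorem machinery à la Bestvina-Feighn, Dahmani, or the ``tree of relatively hyperbolic pieces'' arguments in the style of \cite{bw13} yields hyperbolicity. For fineness, one argues that any circuit of length $n$ in $\Gamma$ either is contained in a single $\Gamma_B$, where fineness follows from the choice of $\Gamma_B$, or crosses finitely many edges of the underlying tree in only finitely many combinatorial patterns, reducing once again to the fineness of the individual pieces. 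Carrying out these two verifications carefully is the technical heart of the argument and is what the cited elementary proof handles directly with the $(G,\mathbb{P})$-graph formalism.
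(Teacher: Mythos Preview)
The paper does not actually prove this theorem; it attributes the original result to Dru\c{t}u--Sapir via asymptotic cones and tree-graded spaces, and it points to \cite{bw13} for an elementary proof in the $(G,\mathbb{P})$-graph formalism. Your proposal is precisely an attempt to carry out the \cite{bw13}-style argument, so in spirit you are following the route the paper cites rather than the Dru\c{t}u--Sapir route.

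There is, however, a concrete gap in your construction. You assert that finite edge stabilizers for $\Gamma$ ``follow directly from the corresponding properties of $T$ and of each $\Gamma_B$.'' But the edges of the Bass--Serre tree $T$ that you retain in $\Gamma$ --- the ones joining a peripheral vertex of some $\Gamma_{B_v}$ to the adjacent graph-manifold-cluster vertex --- have stabilizer equal to the transitional torus subgroup $\mathbb{Z}^2$, which is infinite. This violates condition~(3) of Definition~\ref{relhyp}. The same issue underlies your claim that the $\mathbb{Z}^2$ vertex stabilizers are ``absorbed'' into the $P_i$: being a subgroup of some $P_i$ does not remove $\mathbb{Z}^2$ from the list of conjugacy classes of infinite vertex stabilizers, so as written your $\Gamma$ is a $(\pi_1 M,\{P_i\}\cup\{\mathbb{Z}^2\text{'s}\})$-graph, not a $(\pi_1 M,\{P_i\})$-graph.

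The fix is straightforward and is what \cite{bw13} actually does: rather than keeping the tree edge and the peripheral vertex of $\Gamma_{B_v}$, collapse that edge by attaching the edges of $\Gamma_{B_v}$ incident to the peripheral vertex \emph{directly} to the graph-manifold-cluster vertex. Since those edges of $\Gamma_{B_v}$ already have finite stabilizers, condition~(3) now holds, and the only infinite vertex stabilizers remaining are the conjugates of the $P_i$. With this correction the rest of your outline (hyperbolicity and fineness via the combination arguments of \cite{bw13}) goes through.
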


We also describe the notion of a relatively quasiconvex subgroup. Introduced by Dahmani in \cite{dah03}, relative quasiconvexity is also a rich property with many equivalent definitions. We use a definition in the hyperbolic graph setting due to Mart{\'{\i}}nez-Pedroza and Wise \cite{mw11}

\begin{Definition}

Let $G$ be hyperbolic relative to subgroups $\{ \mathbb{P} \}$ and $\Gamma$ a $(G$, $\{ \mathbb{P} \})$-graph. Suppose $H \leq G$ is a subgroup. Then $H$ is \emph{relatively quasiconvex} in $G$ if there is a quasi-isometrically embedded subgraph $K$ of $\Gamma$ which is $H$-invariant and has finitely many $H$-orbits of edges. 

%Let $G$ be hyperbolic relative to $\{ P_i \}$ with a finite generating set $\mathcal{S}$. Let $\mathcal{P} = \cup_{P_i} P_i - \{ e \}$. A subgroup $H \leq G$ is \emph{relatively quasiconvex} if there is a constant $\kappa$ such that the following holds: For each geodesic $\gamma$ in Cayley$(G$, $\mathcal{S} \cup \mathcal{P})$ connecting two points of $H$, every vertex of $\gamma$ lies within distance $\kappa$ of $H$ in Cayley $(G$, $\mathcal{S})$.

\end{Definition}

Surfaces in a frame efficient collection correspond to relatively quasiconvex subgroups by the following application of results of Hruska \cite{hru10} and Bigdely-Wise \cite{bw13}:

\begin{Proposition}[Przytycki-Wise in \cite{pw13}] \label{relhypsurface}

Let $M$ be a mixed manifold and $S \to M$ a properly immersed incompressible surface. Suppose each piece of $S$ in a hyperbolic block is geometrically finite. Then $\pi_1S$ maps into $\pi_1M$ as a relatively quasiconvex subgroup.

\end{Proposition}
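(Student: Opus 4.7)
The plan is to exploit the natural graph-of-groups decomposition of $\pi_1 M$ coming from the transitional tori and apply the combination theorem of Bigdely-Wise \cite{bw13} for relatively quasiconvex subgroups of relatively hyperbolic groups. Cutting $M$ along its transitional tori expresses $\pi_1 M$ as the fundamental group of a graph of groups whose vertex groups are $\pi_1$ of the hyperbolic blocks and $\pi_1$ of the graph manifold clusters, and whose edge groups are the $\mathbb{Z}^2$ subgroups carried by the transitional tori. Under Theorem \ref{relhyp}, the peripheral subgroups of $\pi_1 M$ are precisely (conjugates of) the vertex groups coming from graph manifold clusters.

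The first step is to establish that every vertex group is relatively quasiconvex in $\pi_1 M$. The graph manifold cluster subgroups are peripheral, so this is automatic. For a hyperbolic block $B$, the group $\pi_1 B$ is hyperbolic relative to the subgroups carried by the boundary tori of $B$; each such boundary torus is either a boundary torus of $M$ or a transitional torus lying in the interior of a graph manifold cluster, so in either case the associated $\mathbb{Z}^2$ is contained in a peripheral subgroup of $\pi_1 M$. This makes $\pi_1 B$ a \emph{full} relatively quasiconvex subgroup of $\pi_1 M$; the transitional torus subgroups serving as edge groups are likewise full.

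Next I would decompose $S$ along the preimages of the transitional tori, which induces a graph-of-groups structure on $\pi_1 S$ compatible with that of $\pi_1 M$. The vertex groups are $\pi_1$ of the hyperbolic pieces of $S$ and $\pi_1$ of its piece clusters; the edge groups are cyclic or trivial, coming from components of $S$ intersected with transitional tori. For a piece cluster $S_0$ in a graph manifold cluster $N$, $\pi_1 S_0 \le \pi_1 N$ is contained in a peripheral subgroup of $\pi_1 M$, hence is relatively quasiconvex. For a hyperbolic piece $S_1$ in a block $B$, the hypothesis of geometric finiteness together with Hruska's theorem \cite{hru10} gives that $\pi_1 S_1$ is relatively quasiconvex in $\pi_1 B$; combined with the relative quasiconvexity of $\pi_1 B$ in $\pi_1 M$ and transitivity, we get that $\pi_1 S_1$ is relatively quasiconvex in $\pi_1 M$.

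The last step is to feed this data into the Bigdely-Wise combination theorem to conclude that $\pi_1 S$, as the fundamental group of a graph of relatively quasiconvex subgroups of $\pi_1 M$, is itself relatively quasiconvex in $\pi_1 M$. I expect the main obstacle to be verifying the matching hypotheses of the combination theorem, specifically that each edge subgroup $\pi_1(S \cap T)$ of $\pi_1 S$ embeds as a relatively quasiconvex, fully peripherally consistent subgroup of both adjacent vertex groups. This uses that $S$ is properly immersed and incompressible, so each component of $S \cap T$ in a transitional torus $T$ represents a cyclic subgroup of the $\mathbb{Z}^2$ peripheral structure on both sides; since cyclic subgroups of $\mathbb{Z}^2$ are automatically quasi-isometrically embedded, the compatibility conditions for Bigdely-Wise follow and the proposition is established.
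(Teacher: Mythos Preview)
Your proposal is correct and follows essentially the same approach as the paper: both arguments invoke Hruska's result \cite{hru10} to deduce relative quasiconvexity of the geometrically finite hyperbolic pieces, and then apply the Bigdely--Wise combination theorem \cite{bw13} (Theorem 4.17) to assemble these into relative quasiconvexity of $\pi_1 S$ in $\pi_1 M$. The paper's proof is a terse two-sentence citation of exactly these two ingredients, whereas you have spelled out the graph-of-groups setup and the verification of the edge-group hypotheses in more detail.
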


\begin{proof}

If $N$ is a hyperbolic block and $S_0 \to N$ is a geometrically finite piece of $S$ in $N$ then $\pi_1S_0$ is relatively quasiconvex in $\pi_1N$ by Corollary 1.3 of \cite{hru10}. It then follows from Theorem 4.17 of \cite{bw13} that $\pi_1S$ is relatively hyperbolic in $\pi_1M$.

\end{proof}

Corollary \ref{fully} deals with the notion of fully relatively quasiconvex subgroups, a notion also introduced by Dahmani \cite{dah03}, so we give this definition as well.

\begin{Definition} \label{fullyrq}

Let $G$ be a relatively hyperbolic group. A relatively quasiconvex subgroup $H$ of $G$ is \emph{fully relatively quasiconvex} if each intersection of $H$ with a peripheral subgroup of $G$ is either finite or finite index.

\end{Definition}

To motivate how we use relative hyperbolicity and relative quasiconvexity and provide some necessary background, let us consider the word hyperbolic case. Let $G$ be a word hyperbolic group acting on a wallspace $( Y , \mathcal{W} )$. Sageev proved in \cite{sag97} that if there are finitely many $G$-orbits of hyperplanes and each hyperplane stabilizer is quasiconvex in $G$, then $G$ acts cocompactly on $\widetilde{X}$. Note Sageev worked in a different setting where $G$ acts on its Cayley graph and instead of walls we have codimension-$1$ subgroups with each codimension-$1$ subgroup $H$ associated to an $H$-almost invariant set. There are two steps to his proof. We need both of these facts for our proof of Proposition \ref{step2} so we state them. 

The first step, Lemma \ref{HyperbolicCenter}, appears implicitly in \cite{sag97} where Sageev deduced it from results in \cite{gmrs98}. Our version is a slight modification of Lemma 7.3 of \cite{hw14} tailored to the wallspace setting.

\begin{Lemma}[Sageev] \label{HyperbolicCenter}

Suppose a group $G$ acts properly and cocompactly by isometries on a wallspace $( Y , \mathcal{W} )$ with $Y$ a $\delta$-hyperbolic space. Suppose there are finitely many $G$-orbits of walls and each wall stabilizer is $\kappa$-quasiconvex in $G$. Then for any $D \geq 0$ there is a constant $L = L( D , \delta , \kappa )$ so that the following holds: Let $\mathcal{V} \subset \mathcal{W}$ be a collection of pairwise $D$-close walls in $Y$. There is a point $y_0 \in Y$ which is distance $\leq L$ from each wall of $\mathcal{V}$.

In particular, there are finitely many orbits of pairwise crossing walls.

\end{Lemma}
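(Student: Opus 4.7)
The plan is to reduce the wallspace statement to a Helly-type property for quasiconvex subsets of the hyperbolic space $Y$, and then deduce the claim about pairwise crossing collections by a cocompactness argument.

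First I would translate walls into quasiconvex subsets of $Y$. Fix a basepoint $y_* \in Y$ and let $R$ be the diameter of a fundamental domain for the cocompact $G$-action. For each wall $W \in \mathcal{W}$, set $Q_W := \mathrm{Stab}(W) \cdot y_*$. Cocompactness together with $\kappa$-quasiconvexity of $\mathrm{Stab}(W)$ in $G$ implies that $Q_W$ is $\kappa'$-quasiconvex in $Y$ for some $\kappa' = \kappa'(\kappa, \delta, R)$. Because there are only finitely many $G$-orbits of walls, the Hausdorff distance between $W$ and $Q_W$ is bounded by a universal constant $\rho$; hence two pairwise $D$-close walls $W, W'$ give pairwise $D' = D + 2\rho$-close quasiconvex subsets $Q_W, Q_{W'}$. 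It therefore suffices to find $y_0 \in Y$ within $L'$ of each $Q_W$ for $W \in \mathcal{V}$.

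Next I would establish the Helly-type property: in a $\delta$-hyperbolic space, any family of uniformly quasiconvex subsets that are pairwise $D'$-close admits a common $L'$-center, with $L' = L'(D', \delta, \kappa')$. The key is the three-set case: given $Q_1, Q_2, Q_3$ pairwise $D'$-close, pick realizing pairs $(p_{ij}, q_{ij})$ with $d(p_{ij}, q_{ij}) \le D'$ where $p_{ij} \in Q_i$, $q_{ij} \in Q_j$. Applying thinness of geodesic triangles to the triangle on $p_{12}, p_{13}, p_{23} \in Q_1$ (after using $D'$-closeness to slide vertices into a single $Q_i$) and to triangles formed in $Q_2$ and $Q_3$, one locates a uniform center $c$ lying within distance $L'(\delta, D', \kappa')$ of each $Q_i$. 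For larger collections one reduces to the three-set case: pick any two sets $Q_\alpha, Q_\beta$, take their realizing closest points, and observe that by hyperbolicity every other $Q_\gamma$ must pass within a controlled distance of this segment; iterating gives a uniform center independent of $|\mathcal{V}|$. This step is the main obstacle, and the cleanest route is to cite the coset-intersection argument of \cite{gmrs98} applied to the quasiconvex sets $Q_W$ after the translation above.

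Finally, I would deduce the "in particular" clause. Pairwise crossing walls are pairwise $0$-close, so by the main statement with $D = 0$ every maximal pairwise crossing collection $\mathcal{V}$ has a common $L$-center $y_0$. Using cocompactness, we may $G$-translate $\mathcal{V}$ so that $y_0$ lies in a fixed compact fundamental domain $K \subset Y$. The walls of $\mathcal{V}$ then all meet the $L$-neighborhood $N_L(K)$, which is compact. Because there are finitely many $G$-orbits of walls and each wall stabilizer acts cocompactly on its wall (after the translation to $Q_W$), only finitely many walls intersect $N_L(K)$. Hence there are finitely many possibilities for $\mathcal{V}$ up to $G$-action, which is what we wanted.
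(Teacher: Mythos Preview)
The paper does not give its own proof of this lemma. It records that the result ``appears implicitly in \cite{sag97} where Sageev deduced it from results in \cite{gmrs98}'' and that the stated version is ``a slight modification of Lemma 7.3 of \cite{hw14} tailored to the wallspace setting.'' Your proposal follows precisely the route the paper attributes to Sageev: replace walls by quasiconvex orbits of their stabilizers and then invoke the coset--intersection/Helly property for quasiconvex subsets from \cite{gmrs98}. So your approach matches what the paper cites rather than diverging from it.

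One small point worth tightening: the step ``the Hausdorff distance between $W$ and $Q_W$ is bounded by a universal constant $\rho$'' tacitly uses that $\mathrm{Stab}(W)$ acts cocompactly on $W$, not just that $\mathrm{Stab}(W)$ is quasiconvex in $G$. This is not listed explicitly among the hypotheses of the lemma as stated, but it is part of the standing assumptions in the Hruska--Wise wallspace framework the paper is working in (and holds for all the walls arising from immersed surfaces in the paper). With that assumption made explicit, your translation to quasiconvex subsets and the subsequent appeal to \cite{gmrs98} is sound, and your deduction of the ``in particular'' clause via cocompactness is the standard one.
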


Whenever the conclusion of Lemma \ref{HyperbolicCenter} holds we say $y_0$ is an \emph{$L$-center of $\mathcal{V}$}. The second step is Lemma \ref{center}.

\begin{Lemma} \label{center}

Let $G$ be a finitely generated group acting on a wallspace $( Y , \mathcal{W} )$. Suppose $G$ acts cocompactly on a subspace $Z \subset Y$. Suppose for any $D > 0$ there exists a constant $L = L(D)$ with the following property: If $\mathcal{V} \subset \mathcal{W}$ is a collection of pairwise $D$-close walls in $Y$ then there is a point $z_0 \in Z$ so that each wall in $\mathcal{V}$ is distance $\leq L$ from $z_0$. Then there are finitely many orbits of collections of pairwise $D$-close walls. In particular, $G$ acts cocompactly on the dual $CAT(0)$ cube complex $\widetilde{X}$ of $( Y, \mathcal{W})$.

\end{Lemma}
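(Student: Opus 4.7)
\textbf{Proof proposal for Lemma \ref{center}.} The plan is to use the cocompactness of the $G$-action on $Z$ to translate every pairwise $D$-close collection of walls into a region where only finitely many walls live, and then extract finiteness of orbits.

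First I would fix a compact set $K \subset Z$ such that $G \cdot K = Z$, which exists by cocompactness of the $G$-action on $Z$. Given any pairwise $D$-close collection $\mathcal{V} \subset \mathcal{W}$, the hypothesis produces an $L$-center $z_0 \in Z$ with every wall in $\mathcal{V}$ at distance $\leq L = L(D)$ from $z_0$. Choose $g \in G$ with $g z_0 \in K$. Then $g\mathcal{V}$ is still pairwise $D$-close, and every wall of $g\mathcal{V}$ lies in the set
\[
\mathcal{F} \ := \ \{\, W \in \mathcal{W} \ : \ d(W, K) \leq L \,\}.
\]
Thus each $G$-orbit of pairwise $D$-close collections is represented by a subcollection of $\mathcal{F}$.

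Next I would invoke local finiteness of the wallspace to conclude $\mathcal{F}$ is finite. In the intended application to a frame efficient collection of surfaces in $\widetilde{M}$, this local finiteness is immediate, since the walls are the elevations of finitely many properly immersed surfaces and only finitely many such elevations meet any bounded neighborhood of a compact set. Granting this, $\mathcal{F}$ has only finitely many subsets, so there are only finitely many $G$-orbits of pairwise $D$-close collections of walls. This proves the first assertion.

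For the second assertion, observe that if two walls $W_1, W_2$ cross then they intersect, so in particular $d(W_1, W_2) = 0$, i.e., every pairwise crossing collection of walls is pairwise $0$-close. Applying the first assertion with $D = 0$ yields finitely many $G$-orbits of pairwise crossing collections of walls, and hence of maximal such collections. Since maximal cubes of the dual cube complex $\widetilde{X}$ are in $G$-equivariant bijection with maximal pairwise crossing collections of walls (as discussed in Section \ref{DefineEfficient}), $\widetilde{X}$ has finitely many $G$-orbits of maximal cubes; the bound $\dim \widetilde{X} \leq |\mathcal{F}|$ moreover makes $\widetilde{X}$ finite-dimensional, so cocompactness of the $G$-action on $\widetilde{X}$ follows.

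The main obstacle I anticipate is not the translation argument itself, which is essentially a pigeonhole, but rather identifying cleanly where local finiteness of $\mathcal{W}$ enters: the lemma as stated is purely wallspace-theoretic, so one must either bake local finiteness into the hypotheses (as is standard in the Hruska--Wise framework being used) or derive it from the cocompact action on $Z$ together with properties of the walls. A secondary subtlety is verifying that ``pairwise $0$-close'' actually captures the crossing relation used in Sageev's construction in the more flexible setting where half-spaces may overlap; this is where the Hruska--Wise conventions need to be checked.
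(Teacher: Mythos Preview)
Your proposal is correct and follows essentially the same route as the paper's proof: translate each pairwise $D$-close collection by $G$ so that its $L$-center lies in a fixed compact piece of $Z$, invoke local finiteness of the walls to see that only finitely many walls meet the $L$-neighborhood of that piece, and conclude that there are finitely many orbits of such collections and hence of maximal cubes. The paper's write-up phrases the translation step as ``finitely many $G$-orbits of $L$-centers'' rather than fixing a compact $K$, and it likewise uses local finiteness without naming it (``a closed ball \ldots can only intersect finitely many walls''), so your explicit flagging of that hypothesis is on point rather than a gap. One cosmetic remark: since the hypothesis is stated for $D>0$, you should take any positive $D$ rather than $D=0$ for the crossing case---pairwise crossing walls are pairwise $D$-close for every $D>0$, so this changes nothing.
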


\begin{proof}

For each collection of pairwise $D$-close walls, choose an $L$-center in $Z$ for that collection. Since $G$ acts cocompactly on $Z$ we can assume, possibly enlarging $L$, that we have finitely many $G$-orbits of $L$-centers. A closed ball of radius $D$ can only intersect finitely many walls. Since there are finitely many orbits of $D$-centers, this puts an upper bound on the size of any collection of pairwise $D$-close walls. In particular, there is an upper bound on the size of any collection of pairwise crossing walls. Therefore $\widetilde{X}$ is finite dimensional and each cube lies in a maximal cube.

Some of the points we chose might be a center for more than one collection, but they can each only be a center for finitely many collections. Since there are finitely many $G$-orbits of centers this implies $\mathcal{W}$ contains finitely many collections of pairwise $D$-close walls. In particular, there are finitely many collections of pairwise crossing walls. Therefore $G$ acts cocompactly on $\widetilde{X}$.

\end{proof}

When applying Lemma \ref{center}, we will consider the situation where $G$ is a peripheral subgroup of a mixed manifold group and our wallspace is a $Z$-wallspace, defined below:

\begin{Definition} \label{hemi}

Let $( Y , \mathcal{W} )$ be a wallspace with $Y$ a metric space. Let $Z \subset Y$ be a subspace with $diam(Z) = \infty$. Let $\mathcal{W}_Z \subset \mathcal{W}$ consists of walls all $W \in \mathcal{W}$ with the following property: There is $r > 0$ so that if $W$ decomposes $Y$ into halfspaces $U$ and $V$ then $diam( U \cap \mathcal{N}_r(Z)) = diam( V \cap \mathcal{N}_r(Z)) = \infty$.

Then $(Y , \mathcal{W}_Z)$ is the \emph{$Z$-wallspace of $( Y , \mathcal{W}_Z )$}.

\end{Definition}

Let $C(Z)$ be the dual $CAT(0)$ cube complex of $(Y , \mathcal{W}_Z)$. If $\widetilde{X}$ is the dual cube complex of $(Y , \mathcal{W})$, then there is a canonical embedding of $C(Z)$ as a convex subcomplex of $\widetilde{X}$. (See sections 3.4 and 7.2 of Hruska-Wise \cite{hw14}.) We call $C(P)$ the \emph{convex subcomplex associated to $P$}.

The following is our main tool for verifying cocompactness.

\begin{Theorem}[Theorem 7.12 \cite{hw14}]\label{relcpt}

Let $( Y , \mathcal{W} )$ be a wallspace such that $Y$ is also a length space. Suppose a group $G$ acts properly and cocompactly by isometries on $Y$ preserving its wallspace structure. Suppose the action on $\mathcal{W}$ has finitely many $G$-orbits of walls. Suppose $G$ is hyperbolic relative to a finite collection of subgroups $ \{ P_i \}$. Suppose for each $W \in \mathcal{W}$ that $H = Stab(W)$ acts cocompactly on $W$ and $H$ is relatively quasiconvex in $G$. For each peripheral subgroup $P_i \in \{ P_i \}$, let $Z_i$ be a nonempty, $P_i$-invariant, and $P_i$-cocompact subspace.

Let $\widetilde{X}$ denote the dual $CAT(0)$ cube complex of $( X , \mathcal{W} )$. For each $Z_i$, let $C(Z_i)$ be the convex subcomplex associated to $Z_i$. Then there exist a compact subcomplex $K \in C(X)$ such that

\begin{enumerate}

\item \label{HyperbolicCore} $C(X) = GK \cup (\mathbf{\cup}_i GC(Z_i))$,

\item \label{IsolatedPeripherals} $gC(Z_i) \cap C(Z_j) \subset GK$ unless $j=i$ and $g \in P_i$, and

\item \label{PeripheralCore} $P_i$ acts cocompactly on $C(Z_i) \cap GK$.

\end{enumerate}

\end{Theorem}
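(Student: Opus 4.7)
The plan is to adapt the hyperbolic-case argument of Lemmas \ref{HyperbolicCenter} and \ref{center} to the relatively hyperbolic setting by passing to the coned-off space $\widehat{Y}$ obtained from $Y$ by collapsing each translate $gZ_i$ to a bounded-diameter cone point. Relative hyperbolicity of $G$ makes $\widehat{Y}$ a $\widehat{\delta}$-hyperbolic space on which $G$ still acts, and relative quasiconvexity of each wall stabilizer $\mathrm{Stab}(W)$ (which acts cocompactly on $W$ by hypothesis) forces $W$ to project to a uniformly quasiconvex subset of $\widehat{Y}$. I would then prove a relative centering dichotomy: for every $D$ there exist $r$ and $L$ such that any collection $\mathcal{V} \subset \mathcal{W}$ of pairwise $D$-close walls in $Y$ satisfies either (a) $\mathcal{V} \subset \mathcal{W}_{gZ_i}$ for some coset $gP_i$, or (b) $\mathcal{V}$ admits an $L$-center $y_0 \in Y$ lying outside every deep region $g\mathcal{N}_r(Z_i)$. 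This is proved by applying Sageev's hyperbolic centering lemma to the $\widehat{Y}$-projections of $\mathcal{V}$ and inspecting whether the resulting center is a cone point or an honest point.

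The compact core $K$ is then assembled from alternative (b). Cocompactness of the $G$-action on $Y$ together with boundedness of deep peripheral penetration shows that the complement in $Y$ of $G \bigcup_i \mathcal{N}_r(Z_i)$ is $G$-cocompact; the uniform bound $L$ therefore yields finitely many $G$-orbits of alternative (b) collections. Their maximal-cube representatives span a compact subcomplex $K \subset \widetilde{X}$, and any cube not in $GK$ is dual to an alternative (a) collection and hence lies in some $gC(Z_i)$, giving conclusion (\ref{HyperbolicCore}). For (\ref{IsolatedPeripherals}) I would invoke bounded coset penetration of the relatively hyperbolic structure: distinct peripheral cosets $gP_i \neq P_j$ coarsely intersect in a uniformly bounded-diameter set, so any cube in $gC(Z_i) \cap C(Z_j)$ is dual to walls lying in both $g\mathcal{W}_{Z_i}$ and $\mathcal{W}_{Z_j}$ and therefore sits within uniform distance of the thick part; enlarging $K$ by that constant forces the containment. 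Conclusion (\ref{PeripheralCore}) then follows because a cube of $C(Z_i) \cap GK$ is precisely one whose dual walls admit a thick-part $L$-center near $Z_i$, and $P_i$-cocompactness of $Z_i$ provides finitely many $P_i$-orbits of such cubes.

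The main obstacle is verifying case (a) of the dichotomy: knowing that the $\widehat{Y}$-projections of the walls in $\mathcal{V}$ all lie near a single cone point $\widehat{gZ_i}$ only gives bounded projection of each $W$ onto $gZ_i$ in the coned-off metric, whereas case (a) demands the stronger Definition \ref{hemi} statement that both halfspaces of $W$ have infinite-diameter intersection with $\mathcal{N}_r(gZ_i)$. Bridging this gap is where the hypotheses enter most crucially: cocompactness of $\mathrm{Stab}(W)$ on $W$, combined with the fact that relatively quasiconvex subspaces are quasi-isometrically controlled by their interactions with peripheral cosets, is needed to promote a coned-off concentration of $W$ into genuine deep penetration of both of its halfspaces in $Y$. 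Once this upgrade is established, the remaining steps are essentially bookkeeping combined with the hyperbolic centering argument transplanted to $\widehat{Y}$.
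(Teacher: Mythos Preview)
The paper does not prove Theorem~\ref{relcpt}; it is quoted verbatim as Theorem~7.12 of Hruska--Wise \cite{hw14} and used as a black box. There is therefore no proof in the present paper to compare your proposal against.

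That said, your outline is broadly in the spirit of how such relative cocompactness statements are established: pass to a coned-off (electrified) space where relative hyperbolicity becomes ordinary hyperbolicity, use relative quasiconvexity of wall stabilizers to get uniform quasiconvexity of walls in the coned-off space, and then run a Sageev-type centering argument to produce a dichotomy between ``thick-part'' cubes and ``peripheral'' cubes. The weak point you yourself flag is genuine: showing that a collection of pairwise $D$-close walls whose coned-off center lies at a cone point $\widehat{gZ_i}$ actually belongs to $\mathcal{W}_{gZ_i}$ in the sense of Definition~\ref{hemi} is the substantive step, and it does not follow from coned-off proximity alone. One needs a more careful analysis of how a relatively quasiconvex wall interacts with a peripheral coset when it penetrates deeply --- in particular, that deep penetration forces the wall stabilizer to intersect the peripheral in an infinite subgroup, which is what makes both halfspaces have infinite-diameter trace on $\mathcal{N}_r(gZ_i)$. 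Your sketch gestures at this but does not supply the mechanism. If you intend to write out a full proof, consult \cite{hw14} directly, as the argument there is where the real content lives.
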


For a group action in the setting above, we say $G$ \emph{acts cocompactly on $\widetilde{X}$ relative to $ \{ C(Z_i) \}$}. 

%For a peripheral subgroup $P_i$, there is always a $P_i$-invariant and $P_i$-cocompact subspace. For example, choose any point $x \in X$ and let $Y_i$ be its $P_i$ orbit.

The following Corollary is key to proving the implication (\ref{Mcocompact}) $\implies$ (\ref{Mchargeless}) of the main theorem.

\begin{Corollary} \label{InducedCubulations}

Let $(G , \{ P_i \} )$ be a relatively hyperbolic group. If $G$ is cocompactly cubulated then so each peripheral subgroup of $G$.

\end{Corollary}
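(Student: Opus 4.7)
The plan is to apply Theorem \ref{relcpt} to the given cocompact action of $G$ on a CAT(0) cube complex $\widetilde{X}$, viewed as a wallspace whose walls are the hyperplanes of $\widetilde{X}$. The aim is to produce, for each peripheral subgroup $P_i$, a convex subcomplex $C(Z_i) \subset \widetilde{X}$ on which $P_i$ acts properly and cocompactly.

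To apply Theorem \ref{relcpt} I verify its hypotheses: cocompactness of the $G$-action yields finitely many $G$-orbits of hyperplanes; each hyperplane stabilizer acts cocompactly on its hyperplane; and hyperplane stabilizers are relatively quasiconvex in $G$ (because in a cocompactly cubulated group, stabilizers of convex subcomplexes are quasiconvex in the absolute sense, and absolutely quasiconvex subgroups of a relatively hyperbolic group are relatively quasiconvex). For each peripheral $P_i$, fix a vertex $v \in \widetilde{X}$ and set $Z_i = P_i \cdot v$, which is $P_i$-invariant and $P_i$-cocompact. Theorem \ref{relcpt} then yields the $P_i$-invariant convex subcomplex $C(Z_i) \subset \widetilde{X}$, itself a CAT(0) cube complex on which $P_i$ acts properly (inherited from the $G$-action on $\widetilde{X}$).

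It remains to establish $P_i$-cocompactness on $C(Z_i)$. The $Z_i$-wallspace $(\widetilde{X}, \mathcal{W}_{Z_i})$ whose dual is $C(Z_i)$ has only finitely many $P_i$-orbits of walls, since $Z_i$ is $P_i$-cocompact and the hyperplanes of $\widetilde{X}$ are locally finite; combined with the finite-dimensionality of $C(Z_i) \subset \widetilde{X}$, each cube of $C(Z_i)$ is determined by a bounded collection of pairwise crossing walls, yielding finitely many $P_i$-orbits of cubes and hence cocompactness. The main obstacle is this last step: conclusion (3) of Theorem \ref{relcpt} only directly provides $P_i$-cocompactness on the intersection $C(Z_i) \cap GK$, so promoting it to cocompactness on the entire subcomplex $C(Z_i)$ requires either the cube-counting argument above or a careful analysis of the decomposition from conclusions (1)--(2) showing that the $P_i$-orbit of $C(Z_i) \cap GK$ exhausts $C(Z_i)$.
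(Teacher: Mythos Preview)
Your setup is essentially identical to the paper's: view $\widetilde{X}$ as a wallspace with walls the hyperplanes, verify the hypotheses of Theorem~\ref{relcpt} (hyperplane stabilizers act cocompactly on their hyperplanes and are relatively quasiconvex via Corollary~1.3 of \cite{hru10}), and take $Z_i$ to be a $P_i$-orbit of a point. The divergence is only at the very last step, and the ``main obstacle'' you flag dissolves immediately once you use the cocompactness of the $G$-action on $\widetilde{X}$ in the obvious way: since $G$ acts cocompactly, one may choose the compact subcomplex $K$ in Theorem~\ref{relcpt} so that $GK = \widetilde{X}$. Then conclusion~(\ref{PeripheralCore}) gives that $P_i$ acts cocompactly on $C(Z_i) \cap GK = C(Z_i) \cap \widetilde{X} = C(Z_i)$, and you are done. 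No separate cube-counting or decomposition analysis is needed.

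Your alternative route---arguing directly that $(\widetilde{X}, \mathcal{W}_{Z_i})$ has finitely many $P_i$-orbits of walls and then bounding orbits of cubes---is not obviously correct as stated. Local finiteness of hyperplanes and $P_i$-cocompactness of the discrete set $Z_i$ do not by themselves bound the number of $P_i$-orbits of walls in $\mathcal{W}_{Z_i}$: a wall of $\mathcal{W}_{Z_i}$ need not pass near $Z_i$, only coarsely separate it, and there is no a priori reason such walls lie in finitely many $P_i$-orbits without invoking something like relative quasiconvexity again. Even granting finitely many orbits of walls, passing to finitely many orbits of maximal cubes requires an $L$-center argument as in Lemma~\ref{center}, which you have not supplied. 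So this detour both adds work and has a genuine gap; the one-line observation $GK = \widetilde{X}$ avoids all of it.
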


\begin{proof}

Suppose $G$ acts properly and cocompactly on a $CAT(0)$ cube complex $\widetilde{X}$ and let $\mathcal{H}$ denote the collection of hyperplanes of $\widetilde{X}$. Cutting along any hyperplane decomposes $\widetilde{X}$ into two components (e.g. Theorem 2.13(4)) of \cite{wis12b}) thus $( \widetilde{X} , \mathcal{H} )$ is a wallspace. It is well-known that the dual cube complex of $( \widetilde{X} , \mathcal{H} )$ is isomorphic to $\widetilde{X}$ via an isomorphism that preserves the $G$-action, so we can use the same for both of them.

The action of $G$ on $\widetilde{X}$ preserves the wallspace structure. For each $H \in \mathcal{H}$, the subgroup $K = stab(H) \leq G$ acts cocompactly on $H$. Further, $H$ is isometrically embedded in $\widetilde{X}$ (e.g. Theorem 2.13(3) of \cite{wis12b}) implying $K$ is quasi-isometrically embedded in $G$. Thus $K$ is relatively quasiconvex in $G$ by Corollary 1.3 of \cite{hru10}. For each peripheral subgroup $P_i$, we can always find a $P_i$-invariant, $P_i$-cocompact subspace. E. g., choose any point $x_0$ and consider its $P_i$-orbit.

Thus our action satisfies the hypotheses of Theorem \ref{relcpt}. Since $G$ acts cocompactly, we choose the compact subcomplex $K$ so that $GK = \widetilde{X}$. Theorem \ref{relcpt}(\ref{PeripheralCore}) then implies $P_i$ acts cocompactly on the associated $C(Z_i) = C(Z_i) \cap GK$. Therefore $P_i$ is cocompactly cubulated.

\end{proof}

We can now prove the implication $(\ref{Mcocompact}) \implies (\ref{Mchargeless})$ in the main theorem.

\begin{Theorem}\label{CocompactImpliesChargeless} 
Suppose $M$ is a mixed manifold and $\pi_1M$ is virtually cocompactly cubulated. Then $M$ is chargeless. 
\end{Theorem}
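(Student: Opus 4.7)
The plan is to use the natural relatively hyperbolic structure on $\pi_1 M$ to transfer cocompact cubulations down to each graph manifold cluster, and then invoke the graph manifold case of Hagen-Przytycki \cite{hp15}. Let $G \leq \pi_1 M$ be a finite index subgroup acting properly and cocompactly on a CAT(0) cube complex $\widetilde{X}$. By Theorem \ref{relhyp}, $\pi_1 M$ is hyperbolic relative to $\{P_i\}$, where each $P_i$ is a chosen conjugate of $\pi_1 N_i$ and $N_1, \ldots, N_k$ are the graph manifold clusters of $M$. Relative hyperbolicity is inherited by finite index subgroups with peripheral subgroups given by representatives of the conjugacy classes of the intersections $g P_i g^{-1} \cap G$, each of which is a finite index subgroup of some conjugate of some $P_i$.

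Applying Corollary \ref{InducedCubulations} to the relatively hyperbolic group $G$ acting cocompactly on $\widetilde{X}$ shows that each peripheral subgroup of $G$ is cocompactly cubulated. Consequently, each $\pi_1 N_i$ is virtually cocompactly cubulated. The graph manifold analogue of the main theorem, due to Hagen-Przytycki \cite{hp15}, asserts that a compact graph manifold with virtually cocompactly cubulated fundamental group must be chargeless; hence every cluster $N_i$ is chargeless.

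To finish we must transfer chargelessness from the clusters to $M$ itself. Any interior Seifert fibered block $B$ of $M$ lies inside some graph manifold cluster $N_i$, since $B$ is not adjacent to any hyperbolic block. Because $B$ contains no boundary torus of $M$ and no transitional torus (the latter by non-adjacency to hyperbolic blocks), it contains no boundary torus of $N_i$ either, so $B$ is also an interior Seifert fibered block of $N_i$. The blocks of $M$ adjacent to $B$ all lie inside $N_i$ and coincide with the blocks of $N_i$ adjacent to $B$, so the chargeless conditions for $B$ as an interior block of $N_i$ and as an interior block of $M$ involve the same cycle $\sum n_i [Z_i]$ in $H_1(B; \mathbb{Z})$. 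Chargelessness of each $N_i$ therefore implies chargelessness of $M$.

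The main obstacle is to cite and assemble the two external ingredients cleanly: the inheritance of the relatively hyperbolic structure under a finite index subgroup with the peripherals described above, and the graph manifold version of the implication (virtually cocompactly cubulated $\Rightarrow$ chargeless) extracted from Hagen-Przytycki. Once these are in place, Corollary \ref{InducedCubulations} together with the direct comparison of the charge definitions across the JSJ and transitional decompositions completes the argument.
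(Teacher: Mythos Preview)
Your proof is correct and follows essentially the same strategy as the paper: pass to the finite-index cocompactly cubulated (sub)group, apply Corollary~\ref{InducedCubulations} via the relatively hyperbolic structure to conclude that each graph manifold cluster has virtually cocompactly cubulated fundamental group, and then invoke Hagen--Przytycki~\cite{hp15}. The paper packages the finite-index step geometrically by passing to a cover $\widehat{M}$ (whose graph manifold clusters cover those of $M$) rather than algebraically via inherited peripheral structure, and leaves your final paragraph on interior blocks implicit, but the argument is the same.
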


\begin{proof}

By Theorem B of \cite{pw13}, it is sufficient to show that for any thick graph manifold cluster $N$ of $M$, its fundamental group $\pi_1N$ is virtually cocompactly cubulated. If $\widehat{M}$ is a finite-sheeted cover of $M$ with $\pi_1\widehat{M}$ cocompactly cubulated, then Corollary \ref{InducedCubulations} implies the fundamental group of each graph manifold cluster of $\widehat{M}$ is cocompactly cubulated. This implies the fundamental group of each graph manifold cluster of $M$ is virtually cocompactly cubulated. Therefore each graph manifold cluster of $M$ is chargeless by Theorem B of \cite{pw13} and hence so is $M$.

\end{proof}

\section{Surfaces in Chargeless Graph Manifold Clusters}\label{graph}

The goal of the next three sections is to prove Proposition \ref{step1} which states that a chargeless mixed manifold $M$ admits a frame efficient collection of surfaces. This section contains preliminary results used in Section \ref{cubulating} to construct a frame efficient collection. Recall from Section \ref{DefineEfficient} our strategy for constructing the properly immersed surfaces of a frame efficient collection is to attach surfaces from efficient collections in graph manifold clusters to surfaces in hyperbolic blocks true to the framing induced by the efficient collections together along boundary curves. In this section we define an efficient collection and prove Theorem \ref{graphsurfaces} which shows that the graph manifold clusters of $M$ admit efficient collections.

Hagen-Przytycki \cite{hp15} constructed in a chargeless graph manifold a collection of surfaces they call an efficient collection. The definition below highlights the key properties of the collection they constructed.

\begin{Definition} \label{graphefficient}

Let $N$ be a graph a manifold. A finite collection $\mathcal{S}$ of surfaces properly immersed imcompressible in $N$ which are in general position is an efficient collection if $\mathcal{S}$ has the following properties:

\begin{enumerate}

\item Each element of $\pi_1N$ has a cut-surface in $\mathcal{S}$.

\item All JSJ tori belong to $\mathcal{S}$.

\item Each $S \in \mathcal{S}$ is virtually embedded in $N$.

\item Let $B$ be a Seifert fibered block. The horizontal pieces in $B$ are all embedded and do not intersect one another.

\item Two horizontal pieces of a surface $S \in \mathcal{S}$ cannot be directly attached in the following sense: If $S_0 \subset S$ is a piece of $S$ embedded horizontally in a Seifert fibered block $B$ and $B'$ is a block adjacent to $B$ via a JSJ torus $T$, then each component of $S_0 \cap B'$ is a fiber of $B'$.

\item For a boundary torus $T$ in a block $B$, there are exactly two surfaces $S, S' \in \mathcal{S}$ intersecting $T$ where $S \cap B$ is horizontal and $S' \cap B$ is vertical.

\end{enumerate}

\end{Definition}

Theorem \ref{graphsurfaces} is minor modification of a result of Hagen-Przytycki \cite{hp15}. Hagen-Przytycki worked in a setting where Seifert fibered spaces are not considered graph manifolds. We extend their proof to cover a Seifert fibered space with boundary using a trivial version of their argument. Note that although sol manifolds can be treated as graph manifolds, they are excluded from the statement below since sol manifolds are not chargeless.

\begin{Theorem}[Hagen-Przytycki in \cite{hp15}] \label{graphsurfaces}

Let $N$ be either a chargeless graph manifold or a Seifert fibered space with boundary. Then $N$ admits an efficient collection of surfaces.

\end{Theorem}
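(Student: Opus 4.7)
The proof splits into two cases. If $N$ is a chargeless graph manifold containing at least one JSJ torus, every property of Definition \ref{graphefficient} is verified by the construction of Hagen-Przytycki in \cite{hp15}, so nothing new is required. The remainder treats the case in which $N$ is a connected Seifert fibered space with $\partial N \ne \emptyset$ and no JSJ tori, so that properties (2) and (5) are vacuous and (4), (6) need only be checked inside the single block $N$.

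Let $p \colon N \to \Sigma$ be the Seifert fibration over the base orbifold $\Sigma$. Since $\partial \Sigma \ne \emptyset$, $N$ admits embedded horizontal surfaces; choose $H_0 \subset N$ to be a single connected embedded horizontal surface, which as a connected finite branched cover of $\Sigma$ automatically meets every boundary torus of $N$. Next, pick a finite collection of properly embedded essential arcs $\alpha_1, \dots, \alpha_k$ and essential simple closed curves $\beta_1, \dots, \beta_\ell$ in $\Sigma$, pairwise in general position, such that (a) their elevations to $\widetilde{\Sigma}$ cut every translation axis of the $\pi_1 \Sigma$-action, and (b) each boundary circle of $\Sigma$ contains the endpoints of exactly one $\alpha_j$. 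Set $A_j = p^{-1}(\alpha_j)$, $\tau_i = p^{-1}(\beta_i)$, and $\mathcal{S} = \{ H_0, A_1, \dots, A_k, \tau_1, \dots, \tau_\ell \}$.

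Properties (3) and (4) are immediate: each surface is embedded, and $H_0$ is the unique horizontal surface. Property (6) follows from (b): each boundary torus of $N$ is intersected by $H_0$ and by the unique $A_j$ whose arc $\alpha_j$ has endpoints on the corresponding boundary circle of $\Sigma$, while the $\tau_i$ do not meet $\partial N$. For property (1), any nontrivial fiber element of $\pi_1 N$ has axis transverse to every elevation of $H_0$, making $H_0$ a cut-surface for it; any element projecting nontrivially to $\pi_1 \Sigma$ has axis cut by some lift of an $\alpha_j$ or $\beta_i$ in $\widetilde{\Sigma}$ via (a), whose preimage in $\widetilde{N}$ is a vertical plane cutting the axis.

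The only technical point is arranging (a) and (b) simultaneously: condition (b) restricts the arcs to a paired or self-paired endpoint configuration on $\partial \Sigma$, which in general does not suffice for (a). The remedy is to supplement with interior essential simple closed curves $\beta_i$, whose preimages are vertical tori disjoint from $\partial N$ and so do not interfere with (b), until (a) holds. Any such choice yields an efficient collection for $N$.
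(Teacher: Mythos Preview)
Your argument is correct and follows essentially the same construction as the paper: one embedded horizontal surface, vertical tori coming from filling closed curves in the base, and one vertical non-$\partial$-parallel annulus per boundary torus. The paper differs only cosmetically---it passes to a finite product cover $F \times S^1$ before choosing the filling curves and treats the thin case $T \times I$ separately---whereas you work directly over the base orbifold and absorb the thin case into the general Seifert fibered case.
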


The proof depends on the following Proposition.

\begin{Proposition} \label{horizontal}

Suppose $N$ is a chargeless graph manifold with at least one JSJ torus. Let $B$ be a Seifert fibered block of $N$ and $T_1, ... , T_k$ the JSJ tori of $N$ contained in $B$. For each $T_i$, let $B'_i$ denote the Seifert fibered block of $N$ adjacent to $B$ via $T_i$.
 There is a properly embedded horizontal surface $S \subset B$ with the following property: For each $T_i$, every component of $S \cap T_i$ is an $S^1$-fiber of $B'_i$.

\end{Proposition}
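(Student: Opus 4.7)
\textbf{Proof proposal for Proposition \ref{horizontal}.} The plan is to translate the chargeless hypothesis into the existence of a null-homologous $1$-cycle on $\partial B$, cap it off with an embedded surface, then apply Hass's theorem to arrange that surface to be horizontal.

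First, I will construct a $1$-cycle $\gamma \subset \partial B$ such that $\gamma \cap T_i$ is $n_i Z_i$ with $n_i \neq 0$ for each $i$ and $[\gamma] = 0 \in H_1(B;\mathbb{Z})$. If $B$ is an interior Seifert fibered block of $N$, I take $\gamma = \sum_i n_i Z_i$ directly from Definition \ref{chargeless}. Otherwise $B$ contains at least one boundary torus of $N$; in that case I pick any nonzero integers $n_i$ and add a $1$-cycle $\alpha$ supported on the boundary tori of $N$ contained in $B$ so that $\gamma := \sum_i n_i Z_i + \alpha$ is null-homologous in $B$. The existence of $\alpha$ follows from a direct calculation in the Seifert-fibered structure of $B$, exploiting that classes on boundary tori of $N$ contribute freely to $H_1(B;\mathbb{Q})$.

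Next, since $[\gamma] = 0$ and $B$ is orientable and irreducible, $\gamma$ bounds a properly embedded, oriented surface $S \subset B$. Among all such bounding surfaces, I choose $S$ of minimum complexity so that $S$ is incompressible (and $\partial$-incompressible, after a standard normalization that does not alter the essential boundary slopes). By Hass's theorem \cite{has84}, $S$ is isotopic in $B$ to a surface that is either horizontal or vertical with respect to the Seifert fibration of $B$. A vertical surface would meet $T_i$ in a union of fibers of $B$; but essentiality of the JSJ torus $T_i$ forces the fibers of $B$ and $B'_i$ to be non-isotopic on $T_i$, so the vertical case is excluded and $S$ must be horizontal. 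A horizontal surface meets each $T_i$ in parallel simple closed curves of a single primitive slope, and since the homology class of $S \cap T_i$ in $H_1(T_i;\mathbb{Z})$ is $n_i [Z_i]$ with $n_i \neq 0$ and $[Z_i]$ primitive, those curves must be parallel to $Z_i$, hence fibers of $B'_i$, as required.

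The main obstacle is the non-interior case of the cycle construction, where I must exhibit $\alpha$ killing the obstruction class while keeping each $n_i \neq 0$; this requires a careful $H_\ast$ argument tailored to the Seifert-fibered structure. A secondary subtlety is arranging incompressibility without altering the prescribed boundary cycle, so that Hass's theorem applies and yields a horizontal surface with the boundary slopes dictated by $\gamma$.
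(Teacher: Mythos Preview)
Your route is genuinely different from the paper's. The paper does not use Hass's dichotomy at all; instead it Dehn-fills each JSJ torus $T_i$ along $Z_i$ to produce a Seifert fibered space $\bar B$. In the interior case, chargelessness says precisely that the Euler number of $\bar B$ vanishes; in the non-interior case $\bar B$ still has boundary. In either situation Proposition~2.2 of \cite{hat07} furnishes an embedded horizontal surface in $\bar B$ directly, and its restriction to $B$ is the desired $S$, since a horizontal surface meets each filling solid torus in meridian disks bounded by copies of $Z_i$. This produces the horizontal surface with the correct boundary slopes in one stroke, without ever constructing a cycle, bounding it, or normalizing the result.

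Your argument can be completed, but two points need real work beyond what you wrote. First, in the non-interior case you cannot take arbitrary nonzero $n_i$: writing $[Z_i]=a_i[\partial_i\Sigma]+b_i[f]$ in $H_1(B)$, the only relation among the boundary classes in $H_1(\Sigma)$ is $\sum_j[\partial_j\Sigma]=0$, so $\sum_{i\le k} n_i[Z_i]$ lies in the span of $\{[\partial_j\Sigma]:j>k\}\cup\{[f]\}$ only when all the products $n_ia_i$ coincide. The phrase ``contribute freely'' does not capture this constraint. Second, Hass's theorem requires $\partial$-incompressibility (and non-$\partial$-parallelity), and $\partial$-compressions alter $\partial S$. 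You can rescue the slope by noting that $\partial$-compressions preserve $[\partial S\cap T_i]\in H_1(T_i)$ and that an embedded essential multicurve on $T_i$ in the primitive direction $[Z_i]$ must consist of parallel copies of $Z_i$; but you must then also control leftover vertical annuli running between distinct boundary tori, since each such annulus shifts the horizontal component's boundary class on $T_i$ by $\pm[f]$ and hence changes its slope away from $Z_i$. None of this is insurmountable, but the Dehn-filling argument sidesteps all of it.
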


\begin{proof}

Obtain a Seifert fibered space $\bar{B}$ from $B$ using the following process: For every $T_i$, perform a Dehn filling along a fiber of $B'_i$. 

First assume $B$ is interior. In \cite{lw97}, Lueke and Wu define the Euler number of $B$ relative to the framing by the fibers of adjacent blocks to be the Euler number of $\bar{B}$. This relative Euler number differs from the charge by only a sign (See Section 1.3 of \cite{bs04} where they give a full definition of charge and observe this fact.) and is therefore $0$. Thus $\bar{B}$ has Euler number $0$. Proposition 2.2 of \cite{hat07} then implies $\bar{B}$ contains an embedded horizontal surface $S$. Each component of $S \cap T_i \subset \bar{B}$ bounds a disk in $\bar{B}$ is therefore isotopic to a fiber of $B'_i$ since we performed our Dehn filling along a fiber. It follows $S \cap B$ is the desired surface.

Assume now $B$ intersects $\partial{N}$. Then $\bar{B}$ has nonempty boundary, so Proposition 2.2 of \cite{hat07} implies it contains an embedded horizontal surface $S$ and we get the desired surface from the argument as above.

%Obtain a Seifert fibered space $\bar{B}$ from $B$ by the following process: For every $T_i$, perform a Dehn filling along the curve $Z_i$. Since $\partial{\bar{B}}$ is nonempty, Proposition 2.2 of \cite{hat07} tells us $\bar{B}$ contains an embedded horizontal surface $S$. Each component of $S \cap T_i$  bounds a disk in $\bar{B}$ and is therefore isotopic to $Z_i$.

%One way to think about this case is that you can choose your curve arbitrarily in the boundary tori. This actually allows one to always find a cycle in $H_1(B, \mathbb{Z})$ consisting of fibers from the adjacent blocks and appropriately chosen slopes on the boundary tori.

%In each $T_j$, let $l_j$ denote a fiber of $B'_j$ contained on $T_j$. 

\end{proof}

We now prove Theorem \ref{graphsurfaces}.

\begin{proof}[Proof of Theorem \ref{graphsurfaces}]

The proof is by construction. Most of the proof is found in \cite{hp15}. We summarize the key steps, with careful attention to the additional case of a Seifert fibered space with boundary.

We consider thick and thin graph manifolds separately.

\textbf{Thick graph manifold.} We review Hagen-Przytycki's construction in \cite{hp15} with an added detail for a thick graph manifold which is a Seifert fibered space with boundary. Their collection of surfaces is built from smaller subcollections.

\underline{Turbine collection.} First assume $N$ contains at least one JSJ torus. For a Seifert fibered block $B$ of $N$, choose two copies of the embedded horizontal surface $S$ provided by Proposition \ref{horizontal}. Let $T$ be a JSJ torus intersecting $B$ and $B'$ the block adjacent to $B$ via $T$. Choose in $B'$ an embedded, vertical, non-$\partial$-parallel annulus $A$ with boundary contained in $T$. For each curve $C \subset S \cap T$, cap off the two copies of $C$ in the two copies of $S$ using a copy of $A$. Do this for every component of $S \cap T$ and every JSJ torus contained in $B$ to obtain a new surface $S'$. The \emph{turbine collection} consists of one surface of this type for every block of $N$.

If $N$ is a Seifert space space, choose any embedded horizontal surface. The turbine collection then consists of this one surface. 

\underline{Vertical collection.} For each block $B$, consider a finite cover $F\times S^1 \to B$ with $F$ a compact hyperbolic surface with boundary of positive genus. In $F$, choose a family of geodesic simple closed curves $\mathcal{C}$ with the following property: When $F$ is cut along every curve of $\mathcal{C}$, each component of the resulting space is either a closed disc or an annulus which contains a component of $\partial{F}$. We say $\mathcal{C}$ \emph{fills} $F$.

Consider the family of vertical tori whose base curves in $F$ corresponded to the curves in $\mathcal{C}$. If $B$ intersects $\partial{N}$ then for each component $T$ of $B \cap \partial{N}$ add a vertical non-$\partial$-parallel annulus whose boundary curves lie in $T$. Map these vertical surfaces down into $B$. Construct such a family of vertical surfaces in every block of $N$ to obtain the \emph{vertical collection}.

Let $\mathcal{S}$ be the collection consisting of all the surfaces in the turbine and vertical collections together with all the JSJ tori of $N$. As explained by Hagen-Przytycki \cite{hp15}, $\mathcal{S}$ contains a cut surface for every nontrivial element of $\pi_1N$ and its surfaces are all virtually embedded so $\mathcal{S}$ is an efficient collection.

\textbf{Thin graph manifold.} For a thin graph manifold $N = T \times I$, choose two embedded annuli which are not homotopic to each other. $N$ has many possible Seifert fibrations. Fixing a fibration we may assume one of the annuli is vertical and the other horizontal.

\end{proof}

\section{Virtually compact special cubulations of cusped hyperbolic 3-manifolds with restricted boundary slopes} \label{hyperbolic}

This section contains more preliminary results we use in Section \ref{cubulating} to construct a frame efficient collection of surfaces in a chargeless mixed manifold. Recall our strategy for constructing properly immersed surfaces in a frame efficient collection is to glue surfaces from efficient collections in the graph manifold clusters to surfaces true to the induced framing in the hyperbolic blocks together along boundary curves. In the previous section we constructed the surfaces used in the graph manifold clusters. The main goals of this section are to prove Theorem \ref{hypsurfaces} which establishes the existence of the surfaces used in the hyperbolic blocks and then to prove Theorem \ref{HyperbolicCube} by showing that the surfaces provided by Theorem \ref{hypsurfaces} produce a virtually compact special cubulation of a cusped hyperbolic 3-manifold group.

Przytycki-Wise \cite{pw13} showed, using Wise's virtually compact special cubulation for hyperbolic 3-manifolds \cite{w12a}, there exists a collection of properly immersed, geometrically finite surfaces in a hyperbolic 3-manifold $N$ which provides a cut-surface for every nontrivial element of $\pi_1N$. Our Theorem \ref{hypsurfaces} strengthens Theorem 4.1 of \cite{pw13} by allowing these surfaces to be chosen to be true to a given framing.

\begin{Theorem}\label{hypsurfaces}

Let $N$ be a framed hyperbolic 3-manifold whose boundary is a disjoint union of tori written $\partial{N} = T_1 \cup \cdots \cup T_k$. There is a finite collection $\mathcal{S}$ of surfaces properly immersed incompressible in $N$ which are in general position and geometrically finite such that $\mathcal{S}$ is true to the framing and contains a cut-surface for all the nontrivial elements of $\pi_1N$. Moreover, the surfaces of $\mathcal{S}$ have no accidental parabolics.

\end{Theorem}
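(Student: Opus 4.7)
The plan is to strengthen Theorem 4.1 of Przytycki-Wise \cite{pw13}, which produces such a collection without the framing constraint, by controlling boundary slopes. I would split the argument into two cases based on whether an element $g \in \pi_1 N$ is peripheral (conjugate into some $\pi_1 T_i$) or loxodromic. For peripheral $g \in \pi_1 T_i$, the axis of $g$ is a line in an elevation $\widetilde T_i$ whose direction is determined by $g$'s slope, and any geometrically finite surface whose boundary component on $T_i$ has slope distinct from $g$'s is a cut-surface for $g$. I would invoke the classical existence of geometrically finite properly immersed incompressible surfaces with arbitrarily prescribed boundary slopes on each cusp of a cusped hyperbolic 3-manifold (via Cooper-Long or Masters-Zhang type results) to produce, for each $i$, surfaces $S_i^C, S_i^D$ whose boundary on $T_i$ has slope $C_i, D_i$ respectively. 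Since $C_i \ne D_i$ as slopes, at least one of these two surfaces is transverse to the slope of any nontrivial element of $\pi_1 T_i$, yielding the required peripheral cut-surfaces.

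For loxodromic $g$, the axis $\mathbb{R}_g$ lies in the interior of $\widetilde N$ and stays within bounded distance of the thick part, so the cut-surface property is insensitive to behavior deep in the cusps. Starting from the Przytycki-Wise collection $\mathcal{S}_0$, I would replace each surface $S \in \mathcal{S}_0$ whose boundary slopes are not all in the framing by a framing-respecting geometrically finite surface $S'$ that cuts the same loxodromic axes as $S$. The replacement proceeds by truncating $S$ along a deep horospherical cross-section inside each incorrectly-slopped cusp and recapping the truncation with an annular cusp of slope $C_i$ or $D_i$. This recapping is generally not available directly in $N$, but becomes available after passing to a finite cover $\hat N$ of $N$; the existence of such a cover is guaranteed by Wise's Malnormal Special Quotient Theorem applied to the peripheral subgroups of $\pi_1 N$. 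The modified surface in $\hat N$ then descends to a properly immersed geometrically finite surface in $N$ true to the framing.

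The main obstacle is the recapping step: verifying that in a suitable finite cover, truncated ends of the Przytycki-Wise surfaces can be capped by framing-respecting annular cusps while preserving $\pi_1$-injectivity, geometric finiteness, the cutting property on the relevant loxodromic axis, and absence of accidental parabolics. Geometric finiteness and the cutting property should follow from working in the cover and confining modifications to the cusps, so that the thick-part geometry of $S$ (through which the axis passes) is preserved on the nose. The no-accidental-parabolic property is preserved because the original $\mathcal{S}_0$ has none and cusp modifications do not introduce essential closed curves of the surface mapping to peripheral elements. After assembling the peripheral cut-surfaces $\{S_i^C, S_i^D\}$ with the modified loxodromic cut-surfaces and perturbing slightly to achieve general position, the resulting finite collection $\mathcal{S}$ satisfies all the conclusions of the theorem.
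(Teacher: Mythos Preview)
Your peripheral case is essentially fine and matches the spirit of Proposition~\ref{cap} in the paper, though you should be explicit that the surfaces $S_i^C, S_i^D$ must also have framing-respecting slopes on \emph{every} $T_j$, not just $T_i$.

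The loxodromic case, however, has a genuine gap in the recapping step. If an end of $S$ exits a cusp along slope $\alpha$, truncating along a horotorus produces a boundary circle of slope $\alpha$ on that horotorus. An ``annular cusp'' in $T_i \times [0,\infty)$ has the same slope on both ends, so you cannot cap this circle with an annulus of slope $C_i \neq \alpha$; the boundaries simply do not match. To change the slope you would need to attach a genuinely two-dimensional piece running through the thick part of the manifold, and then $\pi_1$-injectivity, geometric finiteness, the cut-surface property for the specific axis $\mathbb{R}_g$, and absence of accidental parabolics all become serious issues that your sketch does not address. The appeal to the Malnormal Special Quotient Theorem is also misplaced: MSQT produces hyperbolic quotients with virtually special structure, not finite covers in which such recappings become available, and it is not clear what statement you would extract from it here.

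The paper avoids this difficulty entirely by reversing the order of operations. Rather than starting from the unconstrained Przytycki--Wise surfaces and trying to fix their slopes, it first produces a virtually compact special cubulation $X$ of $\pi_1 N$ whose hyperplane subgroups already meet each $\pi_1 T_i$ only in $\langle C_i\rangle$ or $\langle D_i\rangle$ (this is Theorem~\ref{recube}, obtained by tracing through Wise's proof of Theorem~16.28 in \cite{w12a} and controlling the two slope choices at each cusp). Once this framing-respecting cubulation is in hand, the cut-surface for any $g$ is extracted exactly as in \cite{pw13}: take a hyperplane $\widetilde H$ transverse to an axis of $g$ in $\widetilde X$, pass to the $\operatorname{stab}(\widetilde H)$-cover of $N$, and use a boundary component of its convex core. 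The framing constraint on $\partial S$ is then automatic from the constraint on the hyperplane subgroups, and no surgery on surfaces is needed.
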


Before proving Theorem \ref{hypsurfaces}, we need to prove Theorem \ref{recube} which is our more tightly constrained variation of Wise's virtually compact special cubulation in \cite{w12a}. %First, we prove that Corollary \ref{HyperbolicCube} follows from Theorem \ref{hypsurfaces}.

\begin{Theorem}\label{recube}

Let $N$ be a hyperbolic 3-manifold whose boundary is nonempty, disjoin union of tori $\partial{N} = T_1 , ... , T_k$. In each $T_i$, choose nonhomotopic simple closed curves $C_i$ and $D_i$. Then $\pi_1N$ is virtually the fundamental group of a compact special cube complex $X$ with the property that if $H \subset X$ is an immersed hyperplane of $X$ and a conjugate of $\pi_1H \leq \pi_1X = \pi_1N$ intersects some $\pi_1T_i$, then that intersection lies in either $\pi_1C_i$ or $\pi_1D_i$.

\end{Theorem}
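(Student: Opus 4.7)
The plan is to refine Wise's virtually compact special cubulation of the cusped hyperbolic 3-manifold $N$ from \cite{w12a} by building in the boundary slope constraint. Wise's proof provides a finite family of geometrically finite properly immersed surfaces in $N$ whose dual cube complex $\widetilde X$ supports a free, proper, cocompact, virtually special $\pi_1 N$-action. I will produce a similar family in which every surface has boundary slopes drawn from the framing $\{C_i, D_i\}$ on each $T_i$; the hyperplane constraint in the conclusion then follows automatically from the construction, since the stabilizer of the immersed hyperplane corresponding to a surface $S$ is commensurable with $\pi_1 S$.

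First I would produce, for every nontrivial $g \in \pi_1 N$, a framing-true cut-surface for $g$: a geometrically finite, properly immersed surface whose boundary slopes on each $T_i$ all lie in $\{C_i, D_i\}$ and whose elevation to $\widetilde N$ meets some axis of $g$ transversely in a single point. Starting from a geometrically finite cut-surface supplied by Wise, whose boundary slopes on $T_i$ may be arbitrary, I would alter its boundary behavior by excising a collar of each offending boundary curve and capping it off with a piece of a suitable auxiliary geometrically finite surface with boundary slope $C_i$ or $D_i$; the existence of geometrically finite surfaces with arbitrary prescribed boundary slope in a cusped hyperbolic 3-manifold is a standard Cooper-Long-Reid type fact. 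Because the modification happens entirely in the cusp regions, which can be taken disjoint from the axis of $g$, transversality is preserved, and geometric finiteness is preserved by a combination theorem applied along the cusp subgroups.

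Next I would assemble a finite framing-true family using the relatively hyperbolic version of Sageev's machinery set up in \cite{hw14}: adjoin to a finite collection of such cut-surfaces at least one framing-true surface of slope $C_i$ and one of slope $D_i$ on each $T_i$, so that every peripheral subgroup $\pi_1 T_i \cong \mathbb{Z}^2$ sees exactly the two framing slopes. Properness of the dual action on $\widetilde X$ is then immediate from the cut-surface property and Theorem 5.5 of \cite{hw14}, while cocompactness relative to peripherals follows from Theorem 7.12 of \cite{hw14}: the convex subcomplex associated to $\pi_1 T_i$ is a $\mathbb{Z}^2$-tessellated plane on which $\mathbb{Z}^2$ acts cocompactly, because exactly two wall slopes appear on $T_i$. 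Virtual specialness is inherited from Wise's argument, which depends only on quasiconvexity and double coset separability in $\pi_1 N$, features already established in \cite{w12a} and unaffected by our restricted choice of walls.

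The main obstacle is the first step: constructing framing-true cut-surfaces while simultaneously preserving geometric finiteness, the cut-surface transversality condition, and the absence of accidental parabolics. The cut-and-paste modification in the cusps must be arranged so that a combination theorem applies and no new accidental parabolic elements are introduced; verifying all these conditions simultaneously is the technical heart of the argument.
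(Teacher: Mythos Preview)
Your proposal reverses the paper's logical order and, in doing so, lands on a step that does not work as stated. The paper proves Theorem~\ref{recube} \emph{first}, by modifying Wise's graph-of-groups construction at the cube-complex level: Proposition~\ref{stop} supplies an embedded geometrically finite surface whose boundary on each $T_i$ has slope $C_i$; this surface splits $\pi_1 N$ as a graph of hyperbolic, virtually compact special groups; and then, in step~3 of Wise's proof of Theorem~16.28 of \cite{w12a}, the cubical structure on each $T_i$ is chosen with $C_i$ and $D_i$ as the two $1$-cubes. Only \emph{after} obtaining the cube complex does the paper extract framing-true cut-surfaces (Theorem~\ref{hypsurfaces}) from its hyperplanes.

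You instead try to build framing-true cut-surfaces directly and then cubulate. The gap is in your first step: the operation ``excise a collar of each offending boundary curve and cap it off with a piece of a surface of slope $C_i$ or $D_i$'' cannot change the boundary slope. If the collar you remove has outer boundary of slope $\alpha$ on $T_i$, then its inner boundary also has slope $\alpha$, and anything glued there must match; so the replacement piece, being an annulus in a cusp neighborhood, again meets $T_i$ in slope $\alpha$. More invariantly, the class $[\partial S]\in H_1(\partial N)$ is the image of $[S]\in H_2(N,\partial N)$ under the connecting map, so altering boundary slopes requires a global change of the surface, not a local cusp modification. The Cooper--Long--Reid surfaces you invoke do exist, but to use them you would have to glue along matching $\alpha$-curves (not $C_i$-curves), and then argue that the resulting combination is still geometrically finite, still a cut-surface for $g$, and has \emph{only} framing slopes left over---none of which is addressed. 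You correctly flag this as ``the main obstacle,'' but the proposal does not overcome it; the paper sidesteps the issue entirely by controlling slopes at the level of the cubical structure on the tori rather than at the level of the surfaces.
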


We are not yet ready to prove Theorem \ref{recube} but can outline the two-step strategy. Passing to a finite cover $\widehat{N}$, Wise found a properly embedded, incompressible, geometrically finite surface $S$ in $\widehat{N}$ which intersects each boundary torus. This adds one slope to each $T_i$, the slope of each component of $\partial{S} \cap T_i$. Our first step is Proposition \ref{stop} which states that we can choose $S$ so that it intersects each $T_i$ in curves with the same slope as the respective curve $C_i$. Cutting along this surface decomposes $\widehat{N}$ into a graph of spaces with corresponding graph of groups satisfying the hypotheses of Theorem 16.28 in \cite{w12a} where, in particular, each vertex group is word hyperbolic and virtually compact special. In the proof of Theorem 16.28 in \cite{w12a}, Wise constructs a virtually compact special cubulation of $\pi_1 \widehat{N}$. This process involves choosing a second curve in each $T_i$ which adds a second slope. Our second step is showing we can choose that second curve in each $T_i$ to be $D_i$ respectively.

The following is a modification of Proposition 4.6 of \cite{pw13}.

\begin{Proposition}\label{stop}
There is a finite index cover $\widehat{N} \to N$ with a properly embedded incompressible, possibly disconnected surface $S \subset \widehat{N}$ where each component is geometrically finite, and $S$ has the following property: For each boundary torus $T_i$ of $N$ and each boundary torus $\widehat{T}_{i_j}$ of $\widehat{N}$ covering $T_i$, the surface $S$ has a nonempty intersection with $\widehat{T}_{i_j}$ consisting of parallel copies of a curve in $\widehat{T}_{i_j}$ covering $C_i$. Further, the components of $S$ contain no accidental parabolics.

\end{Proposition}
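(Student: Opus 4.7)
The plan is to modify the proof of Proposition 4.6 of \cite{pw13}, the only difference being that the boundary slopes on each $\widehat T_{i_j}$ must now cover the prescribed curve $C_i$ instead of being arbitrary. I will construct, for each boundary torus $T_i$ separately, a properly immersed, incompressible, geometrically finite surface $S_i \to N$ whose boundary lies in $T_i$ and consists of parallel curves of slope $C_i$, and then use separability of $\pi_1 S_i$ in $\pi_1 N$ to embed all of these surfaces simultaneously in a single finite cover of $N$. Taking the union of those embedded lifts will yield the desired surface $S$.

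For the first step, I would invoke the classical result of Cooper and Long that, for any simple closed curve $C$ on a cusp torus $T$ of a finite-volume hyperbolic $3$-manifold $N$, there is a properly immersed $\pi_1$-injective surface in $N$ all of whose boundary components lie in $T$ with the slope of $C$ (and which does not meet the other cusp tori). One arranges for this surface to be geometrically finite, either directly from the construction or by truncating at the cusp. Accidental parabolics can then be eliminated by the standard cut-and-paste: each simple closed curve in $S_i$ whose image is freely homotopic into $T_i$ but which is not itself peripheral in $S_i$ is removed by cutting along it and capping off the two resulting boundary components with annular pieces running into $T_i$. These modifications preserve both the prescribed boundary slope and geometric finiteness, and strictly decrease the number of accidental parabolics, so iterating terminates.

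For the second step, $\pi_1 N$ is virtually compact special by \cite{w12a}, and so its geometrically finite subgroups (in particular $\pi_1 S_i$) are separable. Hence for each $i$ there is a finite cover $\widehat{N}_i \to N$ in which the chosen elevation of $S_i$ embeds. Let $\widehat{N} \to N$ be a regular finite cover dominating all of the $\widehat{N}_i$ and define $S \subset \widehat{N}$ as the union, over $i$, of all elevations of $S_i$ to $\widehat{N}$. Since $\partial S_j \subset T_j$ for $j \neq i$, a boundary torus $\widehat T_{i_j}$ of $\widehat{N}$ lying over $T_i$ meets only elevations of $S_i$, and every boundary arc of such an elevation in $\widehat T_{i_j}$ is a lift of $C_i$, hence a parallel copy of a single closed curve in $\widehat T_{i_j}$. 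If some $\widehat T_{i_j}$ has empty intersection with $S$, I pass to a further regular cover so that every component of the preimage of every $T_i$ is hit by at least one elevation of $S_i$. The main obstacle I expect is the very first step: simultaneously achieving the prescribed boundary slope, geometric finiteness, and freedom from accidental parabolics for $S_i$; once that is accomplished, the lifting and common-cover argument is essentially the same as in \cite{pw13}.
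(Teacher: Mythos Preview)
Your overall strategy mirrors what Przytycki--Wise do in the proof of their Proposition~4.6, and the paper in fact cites that proof directly for the existence of the finite cover $\widehat N$ and the embedded surface $S'$ with the prescribed slopes $C_i$; so your first and second steps together essentially reconstruct what the paper imports as a black box. The genuine difference is in how accidental parabolics are removed, and here your argument has a gap.

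You only treat accidental parabolics of $S_i$ that are freely homotopic into $T_i$, but an accidental parabolic of $S_i$ can just as well be homotopic into some other cusp torus $T_j$ with $j\neq i$. Your cut-and-cap procedure would then create new boundary curves on $T_j$, and there is no reason their slope should be $C_j$; this destroys exactly the conclusion you are trying to establish. The paper avoids this by reversing the order of operations: it first obtains (from the proof of Proposition~4.6 in \cite{pw13}) an \emph{embedded} surface $S'\subset\widehat N$ with the correct slopes, and only then invokes Lemma~14.22 and Remark~14.23 of \cite{w12a} to replace $S'$ by an embedded $S$ with no accidental parabolics and $\partial S'\subset\partial S$. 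The crucial observation, which has no analogue in your immersed setting, is that an embedded surface meeting a boundary torus in several curves forces all of them to have the same slope, so the extra boundary components in $\partial S\setminus\partial S'$ automatically inherit the prescribed slopes. Your immersed-then-embed route gives up this automatic slope control.

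There is also a secondary issue: the union over $i$ of embedded elevations of the $S_i$ in a common regular cover $\widehat N$ need not itself be embedded, since elevations of $S_i$ and $S_j$ for $i\neq j$ may intersect. You would need a further cut-and-paste or passage to a deeper cover to arrange embeddedness of the union, and you do not address this.
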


\begin{proof}

In the proof of Proposition 4.6 in \cite{pw13}, Przytycki-Wise find a finite cover $\widehat{N}$ and a properly embedded, possibly disconnected surface $S' \subset \widehat{N}$ with the following properties: Each component of $S'$ is incompressible and geometrically finite. Further, for each $T_i$ and each JSJ torus $T_{i_j}$ of $\widehat{N}$ covering $T_i$, the intersection $S' \cap T_{i_j}$ is nonempty and consists of parallel copies of a curve covering $C_i$.

The components of $S'$ might contain accidental parabolics, but by Lemma 14.22 and Remark 14.23 of \cite{w12a} there is a properly embedded surface $S$ with each component geometrically finite such that $S$ has no accidental parabolics and $\partial{S}' \subset \partial{S}$. If an embedded surface intersects a boundary torus in multiple curves, they have to have the same slope, so the components of $\partial{S} - \partial{S}'$ do not add new slopes to the boundary tori.

%Przytycki-Wise \cite{pw13} then described a process for turning intersections with the boundary tori into accidental parabolics. We go the opposite direction and apply Lemma \ref{accident} to $S'$ to obtain a new surface $S$ with no accidental parabolics. Since $S'$ is embedded, any accidental parabolic freely homotopic into some $T_i$ is freely homotopic to $C_i$. Therefore, accidental parabolic removal does add any new slopes to the boundary tori.

\end{proof}

We now show how to modify the proof of Theorem 16.28 in \cite{w12a} to prove our Theorem \ref{recube}.

\begin{proof}[Proof of Theorem \ref{recube}]

Let $\widehat{N}$ and $S$ be the finite cover and embedded surface guaranteed by Proposition \ref{stop} where we choose $S$ so that for each boundary torus $T_{i_j}$ of $\widehat{N}$ covering a boundary torus $T_i$ of $N$, the intersection $S \cap T_{i_j}$ is nonempty and consists of closed curves with the same slope as a curve covering $C_i$. To simplify notation assume $\widehat{N} = N$. Passing to a further finite cover we may assume that for each boundary torus $T_i$ that $C_i$ and $D_i$ form a basis for $\pi_1T_i$.

As in the proof of Theorem 14.29 of \cite{w12a}, $S$ splits $\pi_1N$ as a graph of groups where each vertex group is word hyperbolic and virtually compact special and each edge group is quasi-isometrically embedded. The proof of Theorem 16.28 in \cite{w12a} extends this splitting to a cubulation of $\pi_1N$. The proof involves a lot of machinery, so we just explain how to modify this proof in order to make an arbitrary choice for the second slope in each boundary torus.

Choose a component $E$ of $S$ and let $T_i$ be a boundary torus intersecting $E$. In step 3 of Wise's proof, he gives the torus $T_i$ a cubical structure with $C_i$ as a $1$-cube and attaches it to a cubulation of a subgroup of $\pi_1N$ called the \emph{expanded edge group $\pi_1E^+$ of $\pi_1E$}. Obtain the extended edge space $E^{+}$ of $E$ by taking the union of $E$ together with all the boundary tori intersecting $E$. The subgroup $\pi_1E^+ \leq \pi_1N$ is the \emph{expanded edge group} of $\pi_1E$. It consists of $\pi_1E$ together with multiple HNN extensions corresponding to the intersections of $\pi_1E$ with the tori subgroups of $\pi_1N$.

Equip $T_i$ with a cubical structure that uses $C_i$ and $D_i$ as $1$-cubes. Choose a compact cubulation $B$ of $E$.  The group $\pi_1E$ is free so it's well-known that such cubulations exist. Attach $T_i$ to $B$ along a local isometry representing  $\pi_1C_i \to \pi_1E$. This may require subdividing the cubical structure for $T_i$. (In general, Wise passes to a further finite index subgroup $\pi_1N$ before carrying out this process.)

Doing this for every boundary torus interesting $E$ yields cocompact cubulation $\pi_1E^+$. Repeat this process for every component of $S$ noting that each boundary torus intersects some component of $S$. The rest of the proof in \cite{w12a} extends these cubulations to a virtually compact special cubulation of $\pi_1N$.

\end{proof}

Having proved Theorem \ref{recube}, we can now prove Theorem \ref{hypsurfaces}.

\begin{proof}[Proof of Theorem \ref{hypsurfaces}]

Most of the proof is found as the proof of Theorem 4.1 of \cite{pw13}. We outline the construction and explain how the slopes in the tori are controlled.

In each boundary torus $T_i$ choose simple closed curves $C_i$ and $D_i$. We may assume, by passing to a finite cover and applying Theorem \ref{recube}, that $\pi_1N = \pi_1X$ for a compact special cube complex $X$ with universal cover $\widetilde{X}$ and that any conjugate of an immersed hyperplane subgroup of $\pi_1X$ only intersecting some $\pi_1T_i$ intersects in either $\pi_1C_i$ or $\pi_1D_i$.

Let $g \in \pi_1N$. Choose an axis for $g$ in $\widetilde{X}$ and a hyperplane $\widetilde{H}$ which intersects that axis transversally. Let $K = stab(\widetilde{H}) \leq \pi_1N$.

Let $\widetilde{N}$ be the universal cover of $N$ and let $\widehat{N} \to N$ be the $\pi_1\widetilde{H}$ cover of $N$. Let $L$ be the hyperbolic convex core of the cover $\widehat{N}$ and $\widetilde{L}$ an elevation of $L$ to $\widetilde{N}$. Since $\widetilde{N}$ and $\widetilde{X}$ are quasi-isometric, any axis for $g$ intersects $\widetilde{L}$ transversally. Thus there is an elevation $\widetilde{S} \subset \partial{\widetilde{N}}$ of a component $S$ of $\partial{N}$ which intersects the axis transversally. Immersing $S$ into $N$ gives us a cut-surface for $g$. As explained in the proof of Theorem 4.1 of \cite{pw13}, $S$ is geometrically finite and can be made to have no accidental parabolics.

\end{proof}

We now prove Theorem \ref{HyperbolicCube} by showing that the collection of surfaces provided by Theorem \ref{hypsurfaces} is dual to a virtually compact special cubulation.

\begin{proof}

Let $\mathcal{S}$ be the collection of surfaces provided by Theorem \ref{hypsurfaces} that is true to the framing on $N$. The action of $\pi_1M$ on $\widetilde{X}$ is free and proper since $\mathcal{S}$ contains a cut-surface for every non-trivial element of $\pi_1N$.  Each hyperplane subgroup of $\pi_1X$ is commensurable to a conjugate of a surface subgroup $\pi_1S$ with $S \in \mathcal{S}$ so the interactions with tori subgroups are as desired. Since the surfaces intersect each boundary torus in only two slopes of curves, it is a straightforward exercise applying Theorem \ref{relcpt} to show the action is cocompact.

It remains to show $X$ is special. Passing to finite-sheeted cover, we can assume $\pi_1X = \pi_1M$ is special. Since $X$ is compact, the hyperplane subgroups are quasi-isometrically embedded, so by Theorem 16.23 of \cite{w12a} they are separable in $\pi_1X$ and satisfy double coset separability. Theorem 9.19 of \cite{hw08} then implies $X$ is virtually special.

\end{proof}

In Section \ref{DefineEfficient} we mentioned that Przytycki-Wise used additional ``capping off'' surfaces to construct a collection of surfaces satisfying Definition \ref{efficient} (1)-(5). When constructing a frame efficient collection, we use their capping off surfaces in the hyperbolic blocks.

\begin{Proposition}[Prop. 4.6 of \cite{pw13}]\label{cap}

Let $C_1$, ... , $C_k$ be essential closed curves in the respective boundary tori $T_1$, ... , $T_k$ of $N$. There exists a geometrically finite immersed incompressible surface $S \to N$ with $S \cap \partial{N}$ covering $C_1$ such that all the parabolic elements of $\pi_1S$ are conjugate into some $\pi_1C_i$.

\end{Proposition}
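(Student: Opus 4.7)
The plan is to first construct a properly immersed geometrically finite surface $S_0 \to N$ whose boundary covers $C_1$ and lies only in $T_1$, and then modify it so that its accidental parabolics are all conjugate into some $\pi_1 C_i$. For the existence of $S_0$, I would pass to a finite cover $\widehat{N} \to N$ in which $C_1$ lifts to an embedded simple closed curve $\widehat{C}_1$ that is null-homologous in $\widehat{N}$. Such a cover exists by LERF (available via the virtually compact special cubulation of Theorem \ref{recube}) together with the fact that a suitable power of any peripheral element of $\pi_1 N$ lies in the commutator subgroup of a sufficiently large regular cover. Since $\widehat{C}_1$ is null-homologous, it bounds a properly embedded orientable surface $R \subset \widehat{N}$, which after maximal compression is incompressible. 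Composing $R$ with the covering map $\widehat{N} \to N$ yields a properly immersed incompressible surface with boundary in $T_1$ only, covering $C_1$.

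To ensure geometric finiteness of $R$ and to control its accidental parabolics, I would arrange the cover $\widehat{N}$ via Theorem \ref{recube} with a framing $\{C_i, D_i\}$ in which each prescribed curve $C_i$ is one of the two framing slopes at $T_i$. Theorem \ref{recube} then equips $\pi_1 \widehat{N}$ with a virtually compact special cubulation whose immersed hyperplane subgroups intersect each $\pi_1 T_i$ only in $\pi_1 C_i$ or $\pi_1 D_i$. Applying Lemma 14.22 and Remark 14.23 of \cite{w12a}, exactly as in the proof of Proposition \ref{stop}, replaces $R$ by a geometrically finite incompressible surface in $\widehat{N}$ whose peripheral intersections lie among the framing slopes; a final cut-and-paste discarding any component meeting $\pi_1 T_i$ in $\pi_1 D_i$ leaves a surface whose boundary and accidental parabolics are all conjugate into some $\pi_1 C_i$.

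The main obstacle is reconciling the two pieces of the construction. The null-homology argument produces a surface with boundary only in $T_1$ but no guarantee of geometric finiteness, while Wise's surgery restores geometric finiteness at the cost of potentially introducing new boundary components in other cusps. The slope-restriction clause of Theorem \ref{recube} is the essential ingredient tying the two together: it forces every boundary slope output by the surgery to lie in the framing $\{C_i, D_i\}$, so by including each $C_i$ as one of the framing slopes and discarding the $D_i$-components, one confines the newly produced peripheral intersections to the conjugacy classes of the $\pi_1 C_i$. Re-interpreting any surgery-introduced boundary curves lying outside $T_1$ as accidental parabolics of the immersed surface — allowed by the weaker parabolic-control conclusion of Proposition \ref{cap} as compared to Proposition \ref{stop} — yields the desired $S \to N$.
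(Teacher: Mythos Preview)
The paper does not prove Proposition~\ref{cap}; it is quoted verbatim as Proposition~4.6 of \cite{pw13} and used as a black box. So there is no ``paper's own proof'' to compare against here.

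Your proposed argument has a genuine circularity. You invoke Theorem~\ref{recube} as the key tool, but Theorem~\ref{recube} is proved in this paper via Proposition~\ref{stop}, and the proof of Proposition~\ref{stop} begins with the words ``In the proof of Proposition~4.6 in \cite{pw13}, Przytycki--Wise find a finite cover\ldots''. In other words, Theorem~\ref{recube} already rests on the construction you are trying to reprove. Beyond the circularity, there is a second gap: the slope-restriction clause of Theorem~\ref{recube} constrains the intersections of \emph{hyperplane subgroups of the dual cube complex} with the $\pi_1 T_i$. Your surface $R$ is built by a null-homology argument and then modified by Wise's surgery (Lemma~14.22 of \cite{w12a}); neither step produces a hyperplane of that cube complex, so the framing conclusion of Theorem~\ref{recube} gives you no control over the boundary slopes or accidental parabolics of $R$. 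The sentence ``it forces every boundary slope output by the surgery to lie in the framing $\{C_i, D_i\}$'' is exactly the unjustified leap: Wise's surgery is a purely topological cut-and-paste and is oblivious to any cubulation. The actual proof in \cite{pw13} proceeds by a direct homological/embedded-surface argument in a finite cover and controls parabolics without appealing to a cubulation at all.
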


%%%%%%%%%%%%%%%%%%%%%%%%%%

\section{Constructing a Frame Efficient Collection} \label{cubulating}

%%%%%%%%%%%%%%%%%%%%%%%%%%

In this section we prove Proposition \ref{step1}, which states that a chargeless mixed manifold $M$ admits a frame efficient collection of surfaces. In the previous two sections, we constructed an efficient collection of surfaces in the graph manifold clusters of $M$ and a collection of surfaces in each hyperbolic block true to the framing induced by the efficient collections. We construct properly immersed surfaces in a frame efficient collection by attaching surfaces from these collections together along their boundary curves. To illustrate the two key challenges of this process, suppose $S$ and $S'$ are both surfaces immersed in $M$ with some of their boundary components mapping into a transitional torus $T$. Suppose at least some of those boundary components have the same slope. The first challenge is that these boundary curves of $S$ and $S'$ might map onto their images with different degrees. The second is that $S$ and $S'$ might have different numbers of boundary components mapping into $T$ with that particular slope.

We deal with matching the degrees in Lemma \ref{degree}, which is a fact Przytycki-Wise \cite{pw13} proved as part of the proof of their Theorem 2.1. A main part of the proof of Proposition \ref{step1} involves matching the multiplicities.

\begin{Lemma}[Matching the degrees.] \label{degree}

Let $M$ be a 3-manifold with boundary and $\mathcal{S}$ a collection of surfaces immersed in $M$, but not necessarily properly immersed.  There is a constant $d > 0$ with the following property: For every $S \in \mathcal{S}$ and every boundary arc $C \subset \partial{S}$ there is a finite cover $\widehat{S} \to S$ so that for any curve $\widehat{C} \subset \partial{\widehat{S}}$ covering $C$, the map $\widehat{C} \to M$ obtained by restricting the composition $\widehat{S} \to S \to M$ maps onto it image with degree $d$.

\end{Lemma}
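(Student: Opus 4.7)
The strategy is to take $d$ to be a common multiple of the wrapping degrees of all boundary curves of surfaces in $\mathcal{S}$, and then, for each pair $(S,C)$, produce a finite normal cover $\widehat{S}\to S$ in which every lift of $C$ wraps with a compensating degree so that the composition $\widehat{C}\to M$ has degree exactly $d$.

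For each $S\in\mathcal{S}$ and each boundary curve $C\subset\partial S$, let $n_{S,C}$ denote the degree with which $C$ wraps around its image in $M$. Since $\mathcal{S}$ is finite these form a finite set of positive integers, so we may set $d := \lcm\{n_{S,C}\}$. Given a pair $(S,C)$, let $c\in\pi_1 S$ represent the loop $C$ and put $m := d/n_{S,C}$. It suffices to produce a finite-index normal subgroup $H\trianglelefteq\pi_1 S$ in which the image of $c$ has order exactly $m$: the associated regular cover $\widehat{S}\to S$ then has every lift of $C$ being a degree-$m$ cover of $C$ (uniformity across lifts follows from normality of $H$), and so the composition $\widehat{C}\to C\to M$ has degree $m n_{S,C}=d$.

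Since $S$ is a compact surface with nonempty boundary, $\pi_1 S$ is a finite-rank free group. If $\partial S$ has at least two components then $[c]$ is a primitive element of $H_1(S;\mathbb{Z})$, and $H$ may be chosen as the kernel of the composition $\pi_1 S\to H_1(S;\mathbb{Z})\to\mathbb{Z}/m$ sending $[c]$ to a generator. If $\partial S$ has exactly one component then $c$ lies in the commutator subgroup of $\pi_1 S$, so every abelian quotient kills it; however, choosing a free basis $a_1,b_1,\dots,a_g,b_g$ for which $c=\prod_{i=1}^{g}[a_i,b_i]$ and mapping $a_1\mapsto s$, $b_1\mapsto r$, and the remaining generators to the identity inside a dihedral group $D_{2N}=\langle s,r\mid s^2=r^N=1,\,srs=r^{-1}\rangle$ sends $c$ to $r^{-2}$, which has order exactly $m$ upon taking $N=m$ or $N=2m$ according to whether $m$ is odd or even. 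Taking $H$ to be the kernel of this map yields the required finite-index normal subgroup.

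The main obstacle is the single-boundary-component case, where $c$ is invisible to every abelian quotient of $\pi_1 S$; the flexibility of free groups to surject onto dihedral groups of arbitrary order, together with the fact that the standard commutator hits every conjugacy class of the form $r^{-2}$ in $D_{2N}$, is what resolves this.
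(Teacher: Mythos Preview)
The paper does not actually prove this lemma; it simply attributes the result to Przytycki--Wise \cite{pw13}, where it appears as part of the proof of their Theorem~2.1. So there is no in-paper argument to compare against directly.

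Your argument is correct, provided you make explicit two hypotheses that the statement leaves implicit: $\mathcal{S}$ must be finite and each $S$ compact, so that $\{n_{S,C}\}$ is a finite set of positive integers and the lcm exists (both hold throughout the paper's applications). The case split is the natural one: when $\partial S$ has at least two components every boundary class is primitive in $H_1(S;\mathbb{Z})$, so an abelian quotient suffices; when $\partial S$ has a single component the boundary word $\prod[a_i,b_i]$ dies in every abelian quotient, so a nonabelian finite target is required. Your dihedral computation $[s,r]=srs\cdot r^{-1}=r^{-2}$ is correct, as is the parity adjustment $N\in\{m,2m\}$ ensuring $r^{-2}$ has order exactly $m$. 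Normality of $H$ is precisely what guarantees that \emph{every} elevation $\widehat{C}$ of $C$ has the same degree over $C$, which is the point of the lemma. One could instead invoke omnipotence of free groups or standard LERF arguments for surface groups to manufacture the cover, and it is likely that Przytycki--Wise's original argument runs along such lines, but your explicit construction is more elementary and entirely sufficient here.
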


%Recall Theorem \ref{graphsurfaces} provides surfaces whose horizontal pieces are embedded. Since, in general, we need to replace these surfaces with finite covers in order to attach them to surfaces in the adjacent hyperbolic blocks, our condition Definition \ref{efficient}(\ref{coveringembedding}) is the weaker requirement that the horizontal pieces are mapped into $M$ via a composition of a covering map and an embedding.

%The need to use finite covers of our surfaces is why Definition \ref{efficient}(\ref{coveringembedding}) requires our horizontal pieces be the composition of a covering map and an embedding.

The second challenge to constructing properly immersed surfaces in a frame efficient is the main focus of the proof of Proposition \ref{step1}. Before proving this result, we prove the following which gathers all the surfaces we use in the graph manifold clusters and hyperbolic blocks to construct a frame efficient collection.

%Note a slight abuse of terminology in the proofs of Lemma \ref{efficientpieces} and Theorem \ref{step1}. We sometimes say an immersed surface intersects a transitional torus when we mean their images in $M$ intersect.

\begin{Lemma}[Gathering materials in preparation for construction.] \label{efficientpieces}

Let $M$ be a chargeless mixed manifold. There is a collection of surfaces $\mathcal{S}$ immersed in $M$ with the following properties:

\begin{enumerate}

\item Each $S \in \mathcal{S}$ is either properly immersed in a graph manifold cluster or properly immersed in a hyperbolic block. Note they are not, in general, properly immersed in $M$.

\item For each hyperbolic block or graph manifold cluster $N$ of $M$ and each nontrivial element $g \in \pi_1N$, there is a surface $S \in \mathcal{S}$ immersed in $N$ which is a cut-surface for $g$.

\item All JSJ tori belong to $\mathcal{S}$.

\item Each $S \in \mathcal{S}$ immersed in a graph manifold cluster is virtually embedded.

\item Each $S \in \mathcal{S}$ in a hyperbolic block is geometrically finite.

\item Two horizontal pieces cannot be directly attached in the following sense: Let $S \in \mathcal{S}$ and $B$ be a Seifert fibered block. Suppose $S$ has a piece $S_0$ immersed in $B$ which is horizontal and $B'$ is a Seifert block adjacent to $B$ via a JSJ torus $T$. Then each component of $S_0 \cap T$ is an $S^1$-fiber of $B'$.

%then $S_O$ factors through an embedding into $B$. Further, if $B'$ is a Seifert fibered block adjacent to $B$ via a JSJ torus $T$, then each component of $S_O \cap B'$ is a circle fiber of $B'$.

\item Let $B$ be a Seifert fibered block. The images of horizontal pieces immersed in $B$ are disjoint. Further, each horizontal piece is the composition of a covering map between surfaces and an embedding into $M$.

%For any Seifert fibered block $B$, any two distinct horizontal pieces in $B$ are disjoint. Further, let $\widetilde{B}$ denote the universal cover of $B$. If $S \in \mathbf{S}$ contains a piece that a horizontal in $B$, then the elevations of that piece are all disjoint.

\item The accidental parabolics are all vertical in the following sense: Suppose $S \in \mathcal{S}$ contains an accidental parabolic $C \subset S$ freely homotopic into a transitional torus $T$. Let $B$ denote the Seifert fibered block adjacent to $T$. Then the image of $C$ is freely homotopic to a fiber of $B$.

\item $\mathcal{S}$ includes ``capping off'' surfaces. Suppose $S \in \mathcal{S}$ and a boundary curve $C \subset \partial{S}$ maps into a transitional torus $T$. Then there is another surface $S' \in \mathcal{S}$ immersed in the hyperbolic block adjacent to $T$ so that $\partial{S}$ maps only into $T$ and the image consists of closed curves with the same slope as $C$.

\item There is a uniform degree $d$ so that for every surface $S \in \mathcal{S}$, the immersion $S \to M$ maps each boundary curve $C \subset \partial{S}$ onto its image with degree $d$. Further, if $C$ maps into a transitional torus $T$, then there is exactly one surface $S'$ in the graph manifold cluster adjacent to $T$ such that $S' \cap T$ is nonempty consisting of curves homotopic to $C$.

\end{enumerate}
\end{Lemma}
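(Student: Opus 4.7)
The plan is to assemble $\mathcal{S}$ from three families of surfaces and then apply Lemma \ref{degree} to uniformize boundary degrees. First, for each graph manifold cluster $N_j$ of $M$, apply Theorem \ref{graphsurfaces} (applicable because $M$ is chargeless) to obtain an efficient collection $\mathcal{S}_{N_j}$. By Definition \ref{graphefficient}(6), each transitional torus $T$ is met by exactly two surfaces of $\mathcal{S}_{N_j}$, one horizontal and one vertical in the block of $N_j$ containing $T$, producing two distinct slopes on $T$ that frame the hyperbolic block on the other side. Second, apply Theorem \ref{hypsurfaces} to each hyperbolic block $B$ with this induced framing to obtain a collection $\mathcal{T}_B$ of geometrically finite surfaces in $B$ true to the framing, containing cut-surfaces for every nontrivial element of $\pi_1B$, and without accidental parabolics. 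Third, for each boundary curve $C$ of any surface in $\bigcup_j \mathcal{S}_{N_j} \cup \bigcup_B \mathcal{T}_B$ that maps into a transitional torus $T$, apply Proposition \ref{cap} in the hyperbolic block adjacent to $T$ to obtain a capping-off surface $S_C$ whose boundary covers $C$ and whose remaining boundary components lie in fiber slopes (hence in the framing). Let $\mathcal{S}_0$ be the union.

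Apply Lemma \ref{degree} to $\mathcal{S}_0$ to obtain a uniform boundary-degree $d$ and, for each surface, a finite cover whose boundary curves all map to their images with degree $d$. Passing to these covers (broken into connected components) yields a collection in which conditions (1), (3), (4), (5) of Lemma \ref{efficientpieces} are immediate by construction; (2) persists because an elevation witnessing the cut-surface property for an element $g$ survives when we pass to a cover and then to a connected component, giving an elevation still crossing the axis of $g$ once; (6)--(7) follow from Definition \ref{graphefficient}(4)--(5), with horizontal pieces now factoring as covering maps onto the original embedded horizontal pieces followed by their embeddings into $M$; (8) follows because accidental parabolics of turbine surfaces are fiber-parallel by construction and surfaces in $\mathcal{T}_B$ have none; (9) is ensured by the capping-off surfaces; and the uniform-degree half of (10) is exactly the output of Lemma \ref{degree}.

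The main obstacle is the uniqueness clause of (10): on the graph manifold cluster side of each transitional torus $T$, exactly one surface of $\mathcal{S}$ should intersect $T$ in any given slope. Definition \ref{graphefficient}(6) delivers this in the base efficient collection, but a disconnected cover supplied by Lemma \ref{degree} can produce multiple components all carrying the same slope on $T$, breaking uniqueness. The fix is to retain, for each (transitional torus, slope) pair, only one connected component of the cover from the graph manifold cluster side. Vertical tori cover by tori, so no pruning is ever needed there; only the turbine surfaces potentially require component selection. This pruning preserves property (2) because cut-surfaces for elements of $\pi_1 N_j$ whose axes do not enter the block containing $T$ are carried by the untouched non-torus-adjacent surfaces of $\mathcal{S}_{N_j}$, while the torus subgroup $\pi_1 T \cong \mathbb{Z}^2$ and nearby elements are still cut by the retained component together with the vertical torus, since any two slopes of cut-surfaces suffice to cut every element of $\mathbb{Z}^2$. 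Taking the pruned collection as $\mathcal{S}$ completes the construction.
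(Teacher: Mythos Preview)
Your overall strategy matches the paper's: take efficient collections in the graph manifold clusters, use the induced framing to apply Theorem~\ref{hypsurfaces} in the hyperbolic blocks, add capping-off surfaces via Proposition~\ref{cap}, and then uniformize boundary degrees with Lemma~\ref{degree}. Two points need attention.

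\textbf{A genuine gap in condition~(8).} You do not control the accidental parabolics of the capping-off surfaces. Your sentence ``accidental parabolics of turbine surfaces are fiber-parallel by construction and surfaces in $\mathcal{T}_B$ have none'' ignores the third family entirely, and accidental parabolics (by definition) arise only for curves in hyperbolic blocks, so the turbine remark is irrelevant. You also misread Proposition~\ref{cap}: the surface it produces has boundary \emph{only} in the chosen torus $T_1$, covering $C_1$; the remaining curves $C_2,\dots,C_k$ in the other boundary tori do not receive boundary components but instead constrain the \emph{accidental} parabolics of the surface. The paper's fix (which you should adopt) is to take each $C_i$ for $i\ge 2$ to be the fiber slope of the Seifert fibered block adjacent across $T_i$; then every parabolic of the capping-off surface is conjugate into a fiber subgroup, giving (8).

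\textbf{The pruning step for~(10) is unnecessary and your justification is roundabout.} The paper simply replaces each surface by the single finite cover supplied by Lemma~\ref{degree} and keeps it as one element of $\mathcal{S}$, so the ``exactly one surface'' clause holds tautologically. If you prefer connected surfaces, the clean argument is: any connected component $\widehat{S}_0$ of the cover $\widehat{S}\to S$ is itself a finite cover of $S$, so every elevation of $S$ to $\widetilde{M}$ is already an elevation of $\widehat{S}_0$; hence retaining a single component preserves the cut-surface property for every $g$ that $S$ cut. Your case analysis about axes ``not entering the block containing $T$'' and about $\pi_1T\cong\mathbb{Z}^2$ is not needed and does not, as written, cover all elements of the relevant block.
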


\begin{proof}

Each graph manifold cluster of $M$ is either a Seifert fibered space with boundary or a chargeless graph manifold and therefore admits a collection of surface satisfying Theorem \ref{graphsurfaces} and hence criteria (1), (3), (5), and (6) of our lemma. Let $\mathcal{S}_1$ denote the union of these collections over all the graph manifold clusters of $M$. For each transitional torus $T$ of $M$, there are exactly two surfaces $S, S' \in \mathcal{S}_1$ which intersect $T$. Let $\alpha_T$ and $\beta_T$ denote the slope of the components of $S \cap T$ and $S' \cap T$ respectively.

The surfaces in $\mathcal{S}_1$ put a framing on every hyperbolic of $M$. By Theorem \ref{hypsurfaces}, we can choose in every hyperbolic block a collection of properly immersed satisfying criteria (1) and (4) of our lemma which are true to the framing induced by surfaces in $\mathcal{S}_1$. Further, these surfaces have no accidental parabolics. Let $\mathcal{S}_2$ denote the union of these collections over all the hyperbolic blocks of $M$.

%By Theorem \ref{hypsurfaces}, we can choose in every hyperbolic block a collection of properly immersed satisfying criteria (1) and (4) of our lemma so that if a surface has boundary curve immersed in a transitional torus $T$, then that curve has a slope of either $\alpha_T$ or $\beta_T$. Further, these surfaces have no accidental parabolics. Let $\mathcal{S}_2$ denote the union of these collections over all the hyperbolic blocks of $M$.

For each transitional torus $T$ of $M$, Proposition \ref{cap} says the hyperbolic block adjacent to $T$ contains a properly immersed, geometrically finite surface $S$ whose boundary curves all map into $T$ and have slope $\alpha_T$. Further, any accidental parabolic of $S$ is freely homotopic to an $S^1$-fiber of an adjacent Seifert fibered block. We can construct a capping off surface for the slope $\beta_T$ similarly.

Let $\mathcal{S}$ consists of all the surfaces in $\mathcal{S}_1$ and $\mathcal{S}_2$ together with the capping off surfaces constructed above. By Lemma \ref{degree} we may replace each surface of $\mathcal{S}$ with a finite cover as needed so that their boundary curves map onto their images with a uniform degree. Note that after applying this process $\mathcal{S}$ still satisfies criteria (7).

%Let $T$ be a transitional torus and let $N$ denote the hyperbolic block adjacent to $T$. 

%Let $\mathcal{S}$ be the union of $\mathcal{S}_1$ and $\mathcal{S}_2$ together with the transitional tori of $M$. The other JSJ were included in $\mathcal{S}_1$ so $\mathcal{S}$ now satisfies (2). For each $T$ of $M$, let $N$ denote the hyperbolic block adjacent to $T$ and the add to $\mathcal{S}$ the pair of surfaces described as follows: By Proposition \ref{cap} there is a properly immersed geometrically surface $S \to N$ so that $\partial{S}$ maps into $T$ and each component has slopes $\alpha_T$ (resp. $\beta_T$). Further, if $S$ contains an accidental parabolic homotopic into a transitional torus $T'$, then that accidental parabolic is homotopic to curve in $T'$ with slope $\beta_T'$ (irrespective of whether $S$ caps off the $\alpha_T$ or $\beta_T$ slope).

\end{proof}

We are now ready to prove Proposition \ref{step1}, which involves matching the multiplicities.

\begin{proof}[Proof of Proposition \ref{step1}]

This proof is similar to the proof of Theorem 2.1 in \cite{pw13} but in order to ensure cocompactness we need to be more delicate in certain places. Let $\mathcal{S}$ be the collection of surfaces guaranteed by Lemma \ref{efficientpieces}. For each transitional torus $T$ of $M$, the boundary curves of the surfaces of $\mathcal{S}$ which map into $T$ form two families of curves with the same slope. Label these slopes $\alpha_T$ and $\beta_T$ respectively.

Let $\mathcal{H}$ and $\mathcal{G}$ denote the collections of all the hyperbolic blocks of $M$ and all the graph manifold clusters of $M$ respectively. For each hyperbolic block $Q \in \mathcal{H}$, let $\mathcal{S}_Q \subset \mathcal{S}$ denote the subcollection of surfaces which map into $Q$. For each $N \in \mathcal{G}$, define $\mathcal{S}_N$ similarly.

Choose $Q \in \mathcal{H}$ and $S \in \mathcal{S}_Q$. If $T$ is a transitional torus with $S \cap T$ nonempty, let $N$ be the graph manifold cluster containing $T$ and $S', S'' \in \mathcal{S}_N$ the pair of surfaces intersecting $T$ in curves of slope $\alpha_T$ and $\beta_T$ respectively. If $S \cap T$ contains components with slope $\alpha_T$ then let $m$ denote the number of such components and $n$ denote the number of components of $S' \cap T$. Choose $k$ so that $k n \leq m$ and add $k$ copies of $S'$ to $N$ attaching them to $S$ so that every boundary component of $S$ is attached to a copy of $S'$. If $S \cap T$ contains components with slope $\beta_T$ follow the same process using $S''$. Repeat this for every transitional torus intersecting $\partial{S}$ to obtain a new surface $S^*$.

We now attach ``capping off'' surfaces to $S^*$. Let $\mathcal{T}$ denote the collection of transitional tori which intersect $\partial{S^*}$. For each $T \in \mathcal{T}$, let $S^T_{\alpha}$ and $S^T_{\beta}$ denote the capping off surfaces in $\mathcal{S}$ which intersect $T$ in curves with slope $\alpha_T$ and $\beta_T$ respectively. Further, let $r_T$ and $s_T$ denote the the number of components of $S^* \cap T$ with slope $\alpha_T$ and $\beta_T$ respectively. Let $a_T$ and $b_T$ denote the number of components of $\partial{S^T_{\alpha}}$ and $\partial{S^T_{\beta}}$ respectively. Note some $r_T$ and $s_T$ could be $0$.

 %For each $T \in \mathcal{T}$, let $S^T_{\alpha}$ and $S^T_{\beta}$ denote the capping off surfaces in $\mathcal{S}$ which intersect $T$ in curves with slope $\alpha_T$ and $\beta_T$ respectively.

 %For each $T \in \mathcal{T}''$, let $s_T$ and $r_T$ denote the the number of components of $S^* \cap T$ with slope $\alpha_T$ and $\beta_T$ respectively. Note some $a_T$ or $b_T$ could be $0$. Also, for each $T \in \mathcal{T}''$, consider the surfaces $S^T_{\alpha}$ and $S^T_{\beta}$ constructed earlier. Let $a_T$ and $b_T$ denote the number of components of $\partial{S^T_{\alpha}}$ and $\partial{S^T_{\beta}}$ respectively.

%For each $S^T_{\alpha}$ with $T \in \mathcal{T}''$ let $a_T$ denote the number of components of $S^T_{\alpha} \cap T$ . Similarly, for each $S^T_{\beta}$ with $T \in \mathcal{T}''$ let $b_T$ denote the number of components of $S^T_{\beta} \cap T$.

If every $r_T$ and $s_T$ were nonzero we could define $\ell$ to be the least common multiple of all the constants given above; ie, define $\ell = \lcm( \{ r_T, s_T, a_T, b_T \}_{T \in \mathcal{T}})$. (Yikes! That's a lot of numbers.) In general,  we define $\ell$ to be the lcm of the ones that are nonzero.

Take $\ell$ copies of $S^*$, and, for each $T \in \mathcal{T}$, take $ r_T \ell / a_T$ of copies of $S^T_{\alpha}$ and $s_T \ell / b_T$ copies of $S^T_{\beta}$. Attaching these surfaces together creates a surface properly immersed in $M$. Repeat this process for every $Q \in \mathcal{H}$ and every $S \in \mathcal{S}_Q$ to obtain a collection $\mathbf{S}'$ of surfaces properly immersed in $M$. If $N \in \mathcal{G}$ and $S \in \mathcal{S}_N$ intersects a transitional torus, then some surface of $\mathbf{S}'$ has an piece which is a copy of $S$, so we do not repeat this gluing process for surfaces in the graph manifold clusters. Construct a frame efficient collection $\mathbf{S}$ from $\mathbf{S}'$ by adding the surfaces from $\mathcal{S}$ which were already properly immersed in $M$ as well as all the transitional tori of $M$. (The other JSJ tori were already in $\mathcal{S}$.)

The surfaces of $\mathbf{S}$ are properly immersed incompressible, in general position, and satisfy Definition \ref{efficient}(\ref{vembedded})-(\ref{geometricallyfinite}) and (\ref{turbine})-(\ref{verticalaccidental}) since the surfaces from Lemma \ref{efficientpieces} in have these properties. The collection $\mathbf{S}$ contains a cut-surface for every nontrivial element of $\pi_1M$ since it contains all the JSJ tori and the pieces of surfaces in $\mathbf{S}$ provide cut-surfaces in all the blocks of $M$. The proof in \cite{pw13} that their collection of surfaces satisfies the strong separation goes through without change for our collection $\mathbf{S}$ so we refer the reader there. From all this it follows $\mathbf{S}$ is a frame efficient collection.

\end{proof}

%We close with a fact about efficient collections of surfaces that simplifies the proof of Proposition \ref{step2} in Section \ref{mainproof}.

\section{Dual Cube Complex of a Frame Efficient Collection} \label{mainproof}

In Section \ref{cubulating} we constructed a frame efficient collection of surfaces in a chargeless mixed manifold $M$. In this section we prove Proposition \ref{step2} which states that for a mixed manifold $M$ admitting a frame efficient collection, $\pi_1M$ is virtually compact special. Most of the work in the proof is verifying cocompactness. We will use Theorem \ref{relcpt} to show the action of $\pi_1M$ on $\widetilde{X}$ is cocompact relative to a collection of convex subcomplexes associated to the graph manifold subgroups described in Section \ref{review}. We will then show that each graph manifold subgroup acts cocompactly on its associated convex subcomplex.

%When studying the hemiwallspace associated to a graph manifold cluster $\widetilde{N}$ of $\widetilde{M}$, the proof is simplest when every pair of paired halfspaces arise from a wall intersecting $\widetilde{N}$. The following will allow us to assume we are always in this case:

When studying the convex subcomplex associated to a graph manifold subgroup $\pi_1N$, the proof is simplest when every surface containing an accidental parabolic homotopic into $N$ actually intersects $N$. The following will allow us to assume we are always in this case:

\begin{Proposition} \label{push}

Let $M$ be a mixed manifold admitting a frame efficient collection of surfaces $\mathbf{S}$. Then $M$ also admits an efficient collection $\mathbf{S}'$ with the following property: If $S \in \mathbf{S}$ contains an accidental parabolic $C \subset S$ then $C$ is homotopic in $S$ to a curve that maps into a thin graph manifold block $N$. In particular, the image of $S$ intersects $N$.

\end{Proposition}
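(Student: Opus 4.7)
The plan is to take the given frame efficient collection $\mathbf{S}$ and homotope each surface carrying an accidental parabolic so that the parabolic gets pushed across its associated transitional torus into a thin graph manifold cluster, refining the modified JSJ decomposition of $M$ when necessary to guarantee a thin block is available on the other side. Concretely, fix $S\in\mathbf{S}$ carrying an accidental parabolic $C$, let $T$ be the transitional torus into which $C$ is homotopic in $M$, and write $H$ and $B$ for the hyperbolic and Seifert fibered blocks adjacent to $T$. By Definition \ref{efficient}(\ref{verticalaccidental}) the image of $C$ is freely homotopic in $M$ to a fiber of $B$, which produces an immersed cylinder $A\subset H\cup T$ running from $C$ to that fiber curve in $T$. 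Using $A$, I homotope the immersion $\phi\colon S\to M$ so that a small annular neighborhood of $C$ in $S$ is pushed across $T$; after the homotopy, $C$ is isotopic in $S$ to a curve $C'$ whose image lies in $B$.

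If $B$ is already a thin block then $N=B$ and the conclusion holds immediately. If $B$ is thick, I further refine the JSJ decomposition of $M$ by inserting a parallel copy $T'$ of $T$ in the interior of $B$ so that the collar region between $T$ and $T'$ is a new thin Seifert fibered block $N$ equipped with the Seifert fibration inherited from $B$, while $B\setminus N$ is a slightly smaller copy of $B$. In this refined decomposition $T$ remains transitional, now adjacent to the thin cluster $N$, and $T'$ is a new non-transitional JSJ torus which I add to the surface collection per Definition \ref{efficient}(\ref{jsj}). Reapplying the push deposits $C'$ into $N$. Performing this for every $S$ and every accidental parabolic produces the new collection $\mathbf{S}'$.

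The main obstacle is verifying that $\mathbf{S}'$ remains frame efficient with respect to the refined JSJ decomposition. The cut-surface property (\ref{cut}) and geometric finiteness (\ref{geometricallyfinite}) are unaffected, since each homotopy is supported in a bounded neighborhood of a transitional torus, and cut-surfaces for elements of each new $\pi_1N$ arise from the horizontal and vertical pieces of surfaces in $\mathbf{S}$ that already cross $N$. The Seifert constraints (\ref{turbine}) and (\ref{coveringembedding}) are preserved because, under the compatible fibration, each original horizontal piece in $B$ decomposes into a horizontal piece in $N$ glued to a horizontal piece in $B\setminus N$, while the pushed annulus from $S$ contributes a vertical piece in $N$. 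Strong separation (\ref{strongseparation}) is a local, bounded-scale property that survives the bounded-scale homotopies. Finally, (\ref{verticalaccidental}) holds by construction, since every accidental parabolic in $\mathbf{S}'$ now has a representative mapping into a thin block where it is vertical, as required.
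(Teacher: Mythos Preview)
Your approach diverges from the paper's in the thick case, and the divergence introduces a genuine error. The paper does not homotope $S$ or refine the JSJ decomposition at all; instead it performs \emph{surgery} on $S$ using Wise's accidental parabolic removal (Lemma~14.32 of \cite{w12a}), producing two new boundary circles on $T$, and then caps them with a single vertical annulus in the adjacent Seifert block $B$. When $B$ lies in a thick cluster the base orbifold of $B$ is not an annulus, so one can choose that vertical annulus to be non-$\partial$-parallel; the accidental parabolic is then genuinely eliminated (the curve $C$ is now homotopic in the new surface into the interior of the thick cluster, not merely into a collar of $T$). Only in the thin case does an accidental parabolic survive, and there it already satisfies the conclusion.

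Your alternative---inserting a parallel copy $T'$ of $T$ to manufacture a thin block $N$ inside a thick $B$---breaks condition~(\ref{turbine}) of Definition~\ref{efficient}. You yourself write that ``each original horizontal piece in $B$ decomposes into a horizontal piece in $N$ glued to a horizontal piece in $B\setminus N$,'' but that sentence is precisely a violation: condition~(\ref{turbine}) demands that a horizontal piece in $N$ meet the JSJ torus $T'$ in curves that are \emph{fibers} of the adjacent block $B\setminus N$, whereas with the inherited fibration those curves are transverse to the fibers of $B\setminus N$. Choosing a different fibration on $N$ does not help: it would repair~(\ref{turbine}) at $T'$ only to make the pushed annulus horizontal in $N$, or to make the old vertical pieces of $B$ horizontal in $N$, recreating the problem elsewhere (and jeopardizing~(\ref{coveringembedding}) as well). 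The underlying reason is that~(\ref{turbine}) encodes the fact that adjacent Seifert blocks in a genuine JSJ decomposition have incompatible fiberings; an artificially inserted $T'$ inside a single Seifert block cannot satisfy this. So in the thick case you must modify the surface, not the decomposition, and the non-$\partial$-parallel capping annulus is exactly the missing idea.
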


%\begin{Proposition} \label{push}

%Let $M$ be a chargeless mixed manifold and $\mathbf{S}$ a frame efficient collection in $M$. Suppose $S \in \mathbf{S}$ contains an accidental parabolic $C \subset S$ homotopic into some transition torus $T$ of $M$. The map $S \to M$ can be modified so that $C$ is homotopic in $S$ to a curve which maps onto a vertical fiber of $S$. Further, if $B$ is the Seifert fibered block containing $T$ and $B$ is not a thin graph manifold cluster, then this can be done so that $S \cap B$ is not boundary parallel, removing the accidental parabolic.

%Let $M$ be a chargeless mixed manifold and $\mathbf{S}$ a frame efficient collection in $M$. Suppose $S \in \mathbf{S}$ contains an accidental parabolic $C \subset S$ homotopic into some transition torus $T$ of $M$. Let $B$ be the Seifert fibered block intersecting $T$. Then $S$ can be modified so that $S \cap B$ is nonempty and vertical. Further, if $B$ is not a thin graph manifold cluster, then this can be done so that $S \cap B$ is not boundary parallel, removing the accidental parabolic.

%\end{Proposition}

\begin{proof}

Let $S \in \mathbf{S}$ and supposed $C \subset S$ is an accidental parabolic. Apply the accidental parabolic removal process described in the proof of Lemma 14.32 of \cite{w12a} to obtain a new surface $S'$ with two new boundary components intersecting $T$. Cap off these boundary components with a vertical annulus in $B$. If $B$ is a Seifert-fibered block of a thick graph manifold, then we can choose the annulus so that it is not $\partial$-parallel and therefore eliminate the accidental parabolic.

\end{proof}

To prove Proposition \ref{step2}, we also need the following Lemma:

\begin{Lemma} \label{close}

Let $M$ be a mixed manifold with universal cover $\widetilde{M}$. Let $\mathbf{S}$ be a frame efficient collection and $\widetilde{\mathbf{S}}$ all the elevations of surfaces in $\mathbf{S}$ to $\widetilde{M}$.  There exists $R > 0$ so that the following holds: Let $\widetilde{S}, \widetilde{S}' \in \widetilde{\mathbf{S}}$ and let $\widetilde{B}$ be a Seifert fibered block of $\widetilde{M}$. If both $\widetilde{S} \cap \widetilde{B}$ and $\widetilde{S}' \cap \widetilde{B}$ are non-empty and are distance at least $R$ from each other, then $\widetilde{S}$ and $\widetilde{S}'$ do not intersect in $\widetilde{M}$.

\end{Lemma}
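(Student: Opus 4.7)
My plan is to reduce the lemma to the Strong Separation property (criterion (\ref{strongseparation})) of the frame efficient collection, applied to the graph manifold cluster $\widetilde{N}$ that contains $\widetilde{B}$. Concretely, I aim to choose $R$ large enough that the hypothesis
\[ d\bigl(\widetilde{S}\cap\widetilde{B},\,\widetilde{S}'\cap\widetilde{B}\bigr)\geq R \]
forces the corresponding cluster-level estimate $d(\widetilde{S}\cap\widetilde{N},\widetilde{S}'\cap\widetilde{N})\geq D$, where $D$ is the Strong Separation constant. Then some $\widetilde{W}\in\widetilde{\mathbf{S}}$ separates $\widetilde{S}$ from $\widetilde{S}'$, and since $\widetilde{W}$ separates $\widetilde{M}$ into two half-spaces we immediately conclude $\widetilde{S}\cap\widetilde{S}'=\emptyset$. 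If $\widetilde{B}$ is itself an entire (thin) cluster then this is immediate, so the interesting case is when $\widetilde{B}$ is a proper part of a thick cluster $\widetilde{N}$.

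The first step is a type alignment observation. Viewing $\widetilde{B}\cong\widetilde{\Sigma}\times\mathbb{R}$ as a Seifert fibered block of $\widetilde{M}$, criteria (\ref{turbine}) and (\ref{coveringembedding}) force each component of $\widetilde{S}\cap\widetilde{B}$ to be either a horizontal sheet $\widetilde{\Sigma}\times\{h\}$ or a vertical strip of the form $\gamma\times\mathbb{R}$ for some curve $\gamma\subset\widetilde{\Sigma}$. Any such horizontal sheet meets any such vertical strip in the nonempty curve $\gamma\times\{h\}$, so for any $R>0$ the distance hypothesis forces the components of $\widetilde{S}\cap\widetilde{B}$ and of $\widetilde{S}'\cap\widetilde{B}$ to be of matching type: either both collections consist entirely of horizontal sheets, or both consist entirely of vertical strips.

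The second step is the propagation of the distance estimate from $\widetilde{B}$ to all of $\widetilde{N}$. By criterion (\ref{turbine}) applied at each internal (Seifert–Seifert) JSJ plane of $\widetilde{N}$, the type of piece flips as one crosses from one SF block to an adjacent one: a horizontal piece continues as a vertical piece and vice versa. Since $\mathbf{S}$ is finite, there are only finitely many $\pi_1N$-orbits of pieces in the cluster $\widetilde{N}$, and in particular the curves $\gamma$ underlying vertical strips (obtained as lifts of the finitely many vertical annuli and vertical tori of the efficient collection in the quotient) and the horizontal sheets have bounded combinatorial type. I aim to exploit this finiteness together with the alternating horizontal/vertical structure to show that the subsurface $\widetilde{S}\cap\widetilde{N}$ lies within a uniformly bounded neighborhood (of radius some constant $C=C(M,\mathbf{S})$) of $\widetilde{S}\cap\widetilde{B}$ in the directions transverse to the fiber of $\widetilde{B}$; the symmetric statement holds for $\widetilde{S}'$. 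Setting $R:=D+2C+1$ then yields $d(\widetilde{S}\cap\widetilde{N},\widetilde{S}'\cap\widetilde{N})\geq R-2C\geq D$, and Strong Separation concludes the argument.

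The main obstacle will be step two: establishing the uniform ``transverse spread'' bound $C$ for $\widetilde{S}\cap\widetilde{N}$. The subtlety is that $\widetilde{N}$ can be an arbitrarily large tree of blocks, so naively tracking pieces across many internal JSJ planes could give an unbounded spread. I expect to handle this by analyzing what happens at a single internal SF–SF crossing: the boundary of a horizontal sheet on an internal JSJ plane is a fiber of the adjacent block, and by (\ref{turbine}) its continuation is a vertical piece whose underlying curve $\gamma$ in the new base is of one of finitely many types and runs between specific boundary components. Applying this analysis block-by-block, together with the convex (tree-like) structure of $\widetilde{S}\cap\widetilde{N}$ inside the simply connected surface $\widetilde{S}$, should yield the required uniform bound and hence the desired $R$.
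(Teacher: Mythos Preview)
Your reduction to Strong Separation is a reasonable high-level idea, and your Step~1 (type alignment) is correct. The gap is in Step~2: the ``uniform transverse spread'' claim that $\widetilde{S}\cap\widetilde{N}$ lies in a $C$-neighborhood of $\widetilde{S}\cap\widetilde{B}$ is simply false, and no interpretation of ``directions transverse to the fiber of $\widetilde{B}$'' rescues it. Concretely, take a turbine-type surface with a vertical strip $\gamma\times\mathbb{R}$ in $\widetilde{B}$ attached across an internal JSJ plane to a horizontal sheet in the adjacent block $\widetilde{B}'$. That horizontal sheet covers the entire base $\widetilde{\Sigma}'$ of $\widetilde{B}'$, and points on it can be at arbitrarily large distance from the vertical strip in $\widetilde{B}$. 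So $\widetilde{S}\cap\widetilde{N}$ is \emph{not} contained in any bounded neighborhood of $\widetilde{S}\cap\widetilde{B}$, and the inequality $d(\widetilde{S}\cap\widetilde{N},\widetilde{S}'\cap\widetilde{N})\geq R-2C$ does not follow. The finiteness of $\pi_1N$-orbits of pieces does not help here: it bounds the \emph{shapes} of pieces, not the diameters of their elevations, which are infinite.

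What actually needs to be propagated is not a neighborhood containment but the distance \emph{between} the two surfaces, block by block through the tree of blocks of $\widetilde{N}$ (and also into the adjacent hyperbolic blocks, since $\widetilde{S}$ and $\widetilde{S}'$ may only meet there). The paper does exactly this, bypassing Strong Separation entirely. It invokes two lemmas of Przytycki--Wise: one for hyperbolic blocks (if two geometrically finite pieces meet a common boundary plane $\widetilde{T}$ in lines that are far apart, then the pieces are disjoint in that block and $\widetilde{T}$ is their \emph{only} common boundary plane there) and an analogous one for vertical pieces in Seifert blocks. The ``only common plane'' clause is the crucial inductive fuel: it guarantees that after crossing $\widetilde{T}$ the two surfaces cannot rejoin through some other JSJ plane, so the argument terminates on each branch of the tree. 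The one subtle case is vertical-in-$\widetilde{B}$ becoming horizontal-in-$\widetilde{B}'$; there the paper uses criterion~(\ref{coveringembedding}) (disjoint horizontal images, hence only finitely many horizontal sheets within $R'$ of a given one) to enlarge $R$ so that the horizontal pieces in $\widetilde{B}'$ are again $\geq R'$ apart, reducing to the previous case. If you tried to carry out the propagation needed to salvage your Step~2, you would end up reproducing this argument, at which point the appeal to Strong Separation is redundant.
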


The proof of Lemma \ref{close} uses the following two lemmas due to Przytycki-Wise \cite{pw13}:

\begin{Lemma}[Lemma 2.5 of \cite{pw13}]\label{r1}
Let $\mathcal{S}$ be a finite family of geometrically finite immersed incompressible surfaces in a compact hyperbolic 3-manifold $N$. Let $\widetilde{N}$ denote the universal cover of $N$ and $\widetilde{\mathcal{S}}$ all the elevations of surfaces in $\mathcal{S}$ to $\widetilde{N}$. There exists $R' = R'(N, \mathcal{S})$ such that if the stabilizer of an elevation $\widetilde{S} \in \widetilde{\mathcal{S}}$ intersects the stabilizer of a boundary plane $\widetilde{T} \subset \partial{N}$ along an infinite cyclic group, then $N = N_R(\widetilde{S}) \cap \widetilde{T}$ is nonempty.

Moreover, assume that we have two such elevations $\widetilde{S}$, $\widetilde{S}'$ of possibly distinct surfaces. If $\widetilde{S} \cap \widetilde{T}$ and $\widetilde{S}' \cap \widetilde{T}$ are nonempty and at distance $\geq R$ in the intrinsic metric on $\widetilde{T}$  
(resp. $\mathcal{N}_R(\widetilde{S}) \cap \widetilde{T}$ and $\mathcal{N}_R(\widetilde{S}') \cap \widetilde{T}$ are sufficiently far with respect to some $r$), then $\widetilde{S}$ and $\widetilde{S}'$ are disjoint. (resp. at distance $\geq r$) and $\widetilde{T}$ is the only boundary plane of $\widetilde{N}$ intersecting both $\widetilde{S}$ and $\widetilde{S}'$.

\end{Lemma}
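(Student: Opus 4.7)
The plan is to work directly in $\widetilde{N} \subset \mathbb{H}^3$, viewing each boundary plane $\widetilde{T}$ as a horosphere whose stabilizer is a rank-two parabolic subgroup of $\pi_1N$. Geometric finiteness of each $S \in \mathcal{S}$ gives a thick--thin decomposition: up to the action of $stab(\widetilde{S})$, the elevation $\widetilde{S}$ is the union of a compact thick part together with finitely many standard cusp pieces, one per maximal parabolic subgroup of $\pi_1S$. Since $\mathcal{S}$ is finite and $N$ compact, all the constants arising from this decomposition (injectivity radius at the thin/thick interface, Euclidean widths of cusp cross-sections, etc.) can be taken uniform in $\widetilde{S}$ and $\widetilde{T}$.

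For the first claim, the hypothesis that $stab(\widetilde{S}) \cap stab(\widetilde{T})$ is infinite cyclic forces $\widetilde{S}$ to have a cusp ending at the parabolic fixed point $p$ of $\widetilde{T}$. Placing $p$ at $\infty$ in the upper half-space model, $\widetilde{T}$ becomes a horizontal Euclidean plane and the cyclic intersection acts as a horizontal translation leaving $\widetilde{S}$ invariant. The standard cusp description then shows that above some height $h_0$ the surface $\widetilde{S}$ lies within bounded Euclidean distance of a vertical geodesic plane through that translation axis; uniformity gives a single $R$ with $\mathcal{N}_R(\widetilde{S}) \cap \widetilde{T}$ nonempty. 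For the disjointness assertion, suppose $\widetilde{S}\cap \widetilde{T}$ and $\widetilde{S}'\cap \widetilde{T}$ are nonempty at intrinsic distance $\geq R$ in $\widetilde{T}$. In the same coordinates each trace is a union of parallel Euclidean lines (the invariant axes of the respective cyclic subgroups), and the cusped portions of $\widetilde{S}$ and $\widetilde{S}'$ above $\widetilde{T}$ lie in a uniform Euclidean strip over these axes. Taking $R$ larger than twice the strip width separates the two cusp regions above $\widetilde{T}$; below $\widetilde{T}$ the surfaces retreat to their thick parts, which have uniformly bounded diameter projected into $\widetilde{T}$, so an intersection down there would force the two traces to lie within a uniform distance, contradicting $R$ large. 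The neighborhood version is obtained by a further additive enlargement of $R$ by $r$.

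Uniqueness of $\widetilde{T}$ as the only common boundary plane follows from the same geometry: a second shared horosphere $\widetilde{T}'$ meeting both $\widetilde{S}$ and $\widetilde{S}'$ would produce, by the first paragraph, cusps of both surfaces at $p' = $ apex of $\widetilde{T}'$. Two distinct horoballs in $\widetilde{N}$ are at positive distance, so the pair of cusp rays from $\widetilde{S}$ (resp.\ $\widetilde{S}'$) running from $p'$ to $p$ forces a coarse bound on the Euclidean offset between the axes of $\widetilde{S}\cap\widetilde{T}$ and $\widetilde{S}'\cap\widetilde{T}$, which again violates the distance-$\geq R$ hypothesis once $R$ is taken large enough. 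The main obstacle is producing the uniform strip-width estimate across the finite family $\mathcal{S}$; this is where one needs to combine the Margulis lemma (to get standard horoball neighborhoods at the cusps of each $\widetilde{S}$) with the finiteness of $\mathcal{S}$ and of $\pi_0(\partial N)$, together with cocompactness of $stab(\widetilde{T})$ on $\widetilde{T}$, so that a single $R = R'(N,\mathcal{S})$ governs all pairs $(\widetilde{S}, \widetilde{T})$ simultaneously.
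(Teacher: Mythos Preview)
The paper does not supply its own proof of this lemma: it is quoted verbatim as Lemma~2.5 of Przytycki--Wise \cite{pw13} and used as a black box in the proof of Lemma~\ref{close}. So there is no in-paper argument to compare your proposal against; your sketch is an attempt to reconstruct the Przytycki--Wise proof from scratch.

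As a sketch, your outline is headed in the right direction---the thick--thin decomposition, the upper-half-space picture with $p=\infty$, and uniformity from finiteness of $\mathcal{S}$ and $\pi_0(\partial N)$ are all the correct ingredients. But there is a genuine gap in the disjointness step. You write that ``below $\widetilde{T}$ the surfaces retreat to their thick parts, which have uniformly bounded diameter projected into $\widetilde{T}$, so an intersection down there would force the two traces to lie within a uniform distance.'' The thick part of a geometrically finite $\widetilde{S}$ need not have bounded diameter, nor bounded projection to $\widetilde{T}$: $\widetilde{S}$ may have many other cusps heading off to other parabolic points, and its thick part can be arbitrarily large. What actually controls the situation is relative quasiconvexity: if $\widetilde{S}\cap\widetilde{S}'\ni x$, then the geodesic ray from $x$ to $p$ lies in a uniform neighborhood of \emph{both} $\widetilde{S}$ and $\widetilde{S}'$ (each is $\kappa$-relatively quasiconvex and each contains a ray to $p$ as well as $x$), and where this ray crosses $\widetilde{T}$ it is uniformly close to both traces, bounding their intrinsic distance in $\widetilde{T}$. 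The same mechanism handles the uniqueness of $\widetilde{T}$: a second common boundary plane $\widetilde{T}'$ gives a second common parabolic point $p'$, and the bi-infinite geodesic from $p$ to $p'$ stays close to both surfaces, again forcing the traces on $\widetilde{T}$ to be close. Your ``pair of cusp rays'' picture is gesturing at this, but the bounded-projection claim as stated does not hold and should be replaced by the quasiconvexity argument.
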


\begin{Lemma}[Remark 3.6 of \cite{pw13}]\label{r2}

Let $\mathcal{S}$ be a finite family of immersed incompressible surfaces in a thick graph manifold $N$. There exists $R' = R'(N, \mathcal{S})$ with the following property: Let $B \subset N$ be a Seifert fibered block with elevation $\widetilde{B} \subset \widetilde{N}$ and let $\widetilde{S}$, $\widetilde{S}'$ be elevations to $\widetilde{N}$ of surfaces in $\mathcal{S}$. Suppose $\widetilde{S}_o = \widetilde{S} \cap \widetilde{B}$ and $\widetilde{S}'_o = \widetilde{S}' \cap \widetilde{B}$ are both vertical and that there is JSJ-plane $\widetilde{T} \subset \partial{\widetilde{B}}$ intersecting both $\widetilde{S}_o$ and $\widetilde{S}'_o$. If the distance between the lines $\widetilde{S}_o \cap \widetilde{T}$ and $\widetilde{S}'_o \cap \widetilde{T}$ is $\geq R$ in the intrinsic metric on $\widetilde{T}$, then $\widetilde{S}_o$ and $\widetilde{S}'_o$ are disjoint and $\widetilde{T}$ is the only JSJ-plane contained in $\widetilde{B}$ intersecting both $\widetilde{S}$ and $\widetilde{S}'$.

%$\mathbf{T}(\widetilde{S}_o) \cap \mathbf{T}(\widetilde{S}'_o) = \{ \widetilde{T} \}$.

\end{Lemma}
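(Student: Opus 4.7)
The plan is to choose $R$ as the maximum of the strong separation constant $D$ from Definition~\ref{efficient}(\ref{strongseparation}) and the constants $R'$ obtained by applying Lemma~\ref{r1} to each hyperbolic block of $M$ and Lemma~\ref{r2} to each thick graph manifold cluster of $M$. These maxima are well defined because $M$ has only finitely many blocks and clusters up to $\pi_1M$-translation. I would also inflate $R$ by a constant to account for the quasi-isometric relationship between the intrinsic metric on $\widetilde{B}$ and the intrinsic metrics on its boundary JSJ planes $\widetilde{T} \subset \partial\widetilde{B}$.

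The core argument is local to the graph manifold cluster $\widetilde{N}'$ containing $\widetilde{B}$. The pieces $\widetilde{S}_0 = \widetilde{S} \cap \widetilde{B}$ and $\widetilde{S}'_0 = \widetilde{S}' \cap \widetilde{B}$ are each horizontal or vertical. By Definition~\ref{efficient}(\ref{coveringembedding}) horizontal pieces in $B$ are embedded with disjoint images, so two horizontal pieces are automatically disjoint in $\widetilde{B}$; by Definition~\ref{efficient}(\ref{turbine}) a horizontal piece meets each JSJ plane of $\partial\widetilde{B}$ only in fibers of the adjacent block, so its extension into the adjacent block is itself vertical. Consequently every cross-plane interaction reduces to a vertical-versus-vertical analysis, for which Lemma~\ref{r2} is the natural tool.

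Next I would show that if $\widetilde{S}_0$ and $\widetilde{S}'_0$ share a common boundary JSJ plane $\widetilde{T}$, then the distance $\geq R$ hypothesis in $\widetilde{B}$ forces the intersections $\widetilde{S}\cap\widetilde{T}$ and $\widetilde{S}'\cap\widetilde{T}$ to be distance $\geq R'$ apart in the intrinsic metric on $\widetilde{T}$. Applying Lemma~\ref{r2} (or Lemma~\ref{r1} if $\widetilde{T}$ is a transitional plane whose other side lies in a hyperbolic block) then gives both that the extensions of $\widetilde{S}$ and $\widetilde{S}'$ into the block across $\widetilde{T}$ are disjoint and that $\widetilde{T}$ is the unique common boundary plane of $\widetilde{B}$ meeting both surfaces. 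Propagating this conclusion across each Seifert block of $\widetilde{N}'$ visited by either surface, one establishes that $\widetilde{S}$ and $\widetilde{S}'$ are disjoint within $\widetilde{N}'$ and that their intersections with $\widetilde{N}'$ remain at distance $\geq D$. The strong separation property, Definition~\ref{efficient}(\ref{strongseparation})(2), then produces a surface in $\widetilde{\mathbf{S}}$ separating $\widetilde{S}$ from $\widetilde{S}'$ in all of $\widetilde{M}$, yielding the global disjointness.

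The main obstacle will be the metric bookkeeping: converting the distance lower bound in $\widetilde{B}$ into lower bounds on the distances in the boundary JSJ planes, in the pieces within adjacent Seifert blocks of $\widetilde{N}'$, and finally on the intersections with the entire cluster $\widetilde{N}'$. Particular care is needed in the mixed case where one piece in $\widetilde{B}$ is horizontal and the other vertical, since by Definition~\ref{efficient}(\ref{turbine}) horizontal pieces meet $\partial\widetilde{B}$ only along fibers of the adjacent block while a vertical piece of $\widetilde{B}$ is a union of fibers of $\widetilde{B}$ itself, so their intersection patterns with boundary JSJ planes are controlled by different geometric constraints that must be reconciled before Lemma~\ref{r2} can be applied.
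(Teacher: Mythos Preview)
Your proposal is not a proof of Lemma~\ref{r2} at all; it is an attempt to prove Lemma~\ref{close}. The most immediate symptom is that you invoke Lemma~\ref{r2} itself in the middle of the argument (``Applying Lemma~\ref{r2} \ldots''), which is circular. More structurally, Lemma~\ref{r2} is a statement about a thick graph manifold $N$ and a finite family $\mathcal{S}$ of immersed incompressible surfaces in $N$; there is no mixed manifold $M$, no hyperbolic block, no frame efficient collection, and no Strong Separation hypothesis available. Everything you appeal to --- Definition~\ref{efficient}(\ref{strongseparation}), (\ref{turbine}), (\ref{coveringembedding}), and Lemma~\ref{r1} for hyperbolic blocks --- lives in the ambient mixed-manifold setting of Section~\ref{mainproof} and is simply not part of the data of Lemma~\ref{r2}.

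For comparison, the paper does not prove Lemma~\ref{r2}; it is quoted from Remark~3.6 of \cite{pw13}. A correct argument is entirely local to the single Seifert block $\widetilde{B}$ and uses its product structure $\widetilde{B}\cong F\times\mathbb{R}$ with $F$ a convex subset of $\mathbb{H}^2$. Vertical pieces $\widetilde{S}_o$, $\widetilde{S}'_o$ project to (quasi)geodesic arcs in $F$, and the JSJ plane $\widetilde{T}$ projects to a boundary geodesic of $F$. The hypothesis that $\widetilde{S}_o\cap\widetilde{T}$ and $\widetilde{S}'_o\cap\widetilde{T}$ are far apart in $\widetilde{T}$ translates to the projected arcs meeting that boundary geodesic at points far apart in $F$; standard thin-triangle/divergence properties of $\mathbb{H}^2$ then force the two arcs to be disjoint and to meet no other boundary component of $F$ in common, which lifts to the stated conclusion. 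None of this requires, or is helped by, the frame-efficient machinery.
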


%Since thin graph manifolds are only adjacent to hyperbolic blocks, the first result will suffice.

\begin{proof} [Proof of Lemma \ref{close}]

For each graph manifold cluster or hyperbolic block $N$ of $M$, let $\mathcal{S}_N$ denote the collection of all pieces of surfaces in $\mathbf{S}$ which map into $N$. Choose $R'$ satisfying Lemma \ref{r1} and Lemma \ref{r2} for every pair $(N, \mathcal{S}_N)$.

Let $\widetilde{B}$ be a Seifert fibered block of $\widetilde{M}$. Suppose $\widetilde{S}, \widetilde{S}' \in \widetilde{\mathbf{S}}$ both intersect $\widetilde{B}$. If there is no JSJ-plane contained in $\widetilde{B}$ which insersects both $\widetilde{S}$ and $\widetilde{S}'$ then $\widetilde{B}$ is only block of $\widetilde{M}$ intersecting both $\widetilde{S}$ and $\widetilde{S}'$. In this case, being far apart in $\widetilde{B}$ clearly implies $\widetilde{S}$ and $\widetilde{S}'$ do not intersect in $\widetilde{M}$.

Now assume a JSJ-plane $\widetilde{T}$ of $\widetilde{B}$ intersects both $\widetilde{S}$ and $\widetilde{S}'$. Let $\widetilde{B}'$ be the block adjacent to $\widetilde{B}$ via $\widetilde{T}$. We will find $R > 0$ so that if $\widetilde{S} \cap \widetilde{T}$ and $\widetilde{S}' \cap \widetilde{T}$ are distance at least $R$ from each other then $\widetilde{S}$ and $\widetilde{S}'$ do not intersect in $\widetilde{B}'$ and that $\widetilde{T}$ is the only JSJ-plane of $\widetilde{B}$ intersecting both $\widetilde{S}$ and $\widetilde{S}'$. It will follow that that $\widetilde{S}$ and $\widetilde{S}'$ do not intersect in the component of $\widetilde{M} - \widetilde{B}$ containing $\widetilde{B}'$. Since $T$ was chosen arbitrarily, it will then follow that $\widetilde{S}$ and $\widetilde{S}'$ do not intersect in $\widetilde{M}$.

%Suppose both $\widetilde{S}$ and $\widetilde{S}'$ intersect a JSJ-plane $\widetilde{T}$ contained in $\widetilde{B}$. Let $\widetilde{B}'$ be the block adjacent to $\widetilde{B}$ via $\widetilde{T}$. We will find $R > 0$ so that if $\widetilde{S} \cap \widetilde{T}$ and $\widetilde{S}' \cap \widetilde{T}$ are distance at least $R$ from each other then $\widetilde{S}$ and $\widetilde{S}'$ do not intersect in the component of $\widetilde{M} - \widetilde{B}$ that contains $\widetilde{B}'$. It will follow from symmetry that if $\widetilde{S} \cap \widetilde{B}$ and $\widetilde{S}' \cap \widetilde{B}$ are distance at least $R$ from each other then $\widetilde{S}$ and $\widetilde{S}'$ do not intersect in $\widetilde{M}$.

%Suppose $T$ is a transitional plane intersecting $\widetilde{B}$ and let $\widetilde{B}'$ denote the block $M$ adjacent to $B$ via $T$. Suppose $\widetilde{S}$ and $\widetilde{S}'$ both intersect $\widetilde{T}$. We will find $R > 0$ so that if $\widetilde{S} \cap \widetilde{T}$ and $\widetilde{S}' \cap \widetilde{T}$ are distance at least $R$ from each other then $\widetilde{S}$ and $\widetilde{S}'$ do not intersect in the component of $\widetilde{M} - \widetilde{B}$ that contains $\widetilde{B}'$. It will follow from symmetry that if $\widetilde{S} \cap \widetilde{B}$ and $\widetilde{S}' \cap \widetilde{B}$ are distance at least $R$ from each other then $\widetilde{S}$ and $\widetilde{S}'$ do not intersect in $\widetilde{M}$.

There are a few cases to consider. First, if one of these elevations has a horizontal piece in $\widetilde{B}$ and the other a vertical piece in $\widetilde{B}$, then they cross in $\widetilde{B}$. Therefore $d ( \widetilde{S} \cap \widetilde{B} , \widetilde{S}' \cap \widetilde{B} ) = 0 < R'$.

Next assume both $\widetilde{S} \cap \widetilde{B}$ and $\widetilde{S}' \cap \widetilde{B}$ are horizontal and that $\widetilde{B}'$ is a hyperbolic block. If $\widetilde{S} \cap \widetilde{T}$ and $\widetilde{S} \cap \widetilde{T}$ are both non-empty and at distance at least $R'$ from each other, then by Lemma \ref{r1} the elevations $\widetilde{S}$ and $\widetilde{S}'$ do not intersect in $\widetilde{B}'$ nor do they intersect any common JSJ planes of $\widetilde{B}'$ other than $\widetilde{T}$. Thus $\widetilde{S}$ and $\widetilde
 {S}'$ do not intersect in this component of $\widetilde{M} - \widetilde{B}$. If $\widetilde{B}'$ is a Seifert fibered block, then $\widetilde{S} \cap \widetilde{B}'$ and $\widetilde{S}' \cap \widetilde{B}'$ are vertical in $\widetilde{B}'$ and the same conclusion holds by applying Lemma \ref{r2}. %(Note that if $\widetilde{B}'$ is a Seifert fibered block then this implies $\widetilde{B}$ and $\widetilde{B}'$ are blocks of a thick graph manifold cluster, so Lemma \ref{r2} applies.)

Now assume they are both vertical in $\widetilde{B}$. If $\widetilde{B}'$ is hyperbolic, $R'$ still suffices by the same argument as before. The trickier case is when $\widetilde{B}'$ is Seifert fibered since then $\widetilde{S}$ and $\widetilde{S}'$ have pieces horizontal in $\widetilde{B}'$. The horizontal pieces in $\widetilde{B}'$ do not intersect and it follows that only finite many can lie $R'$-close to $\widetilde{S}$. Thus, there is some $R \geq R'$ so that if $\widetilde{S} \cap \widetilde{T}$ and $\widetilde{S}' \cap \widetilde{T}'$ are at distance at least $R$, then $\widetilde{S} \cap \widetilde{B}'$ and $\widetilde{S}' \cap \widetilde{B}'$ are at distance at least $R'$ and a previous case implies $\widetilde{S}$ and $\widetilde{S}'$ do not intersect in $\widetilde{M}$. This choice of $R$ depended only on $R'$ and the $\pi_1M$-orbit of $\widetilde{T}$. Therefore, we can choose $R$ uniformly.

%The horizontal pieces in our Seifert fibered blocks were chosen to be the composition of a covering map and an embedding. Therefore, their elevations do not cross and only finite many elevations of horizontal pieces lie $R'$-close to $\widetilde{S}$. Thus, there is some $R \geq R'$ so that if $\widetilde{S} \cap \widetilde{T}$ and $\widetilde{S} \cap \widetilde{S}'$ are at distance at least $R$, then $\widetilde{S} \cap \widetilde{B}'$ and $\widetilde{S}s \cap \widetilde{B}'$ are at distance at least $R'$. Thus the question reduces to considering $\widetilde{S} \cap \widetilde{B}'$ and $\widetilde{S} \cap \widetilde{B}'$ and applying the previous cases.

\end{proof}

We now prove Proposition \ref{step2}, completing the proof of the main theorem.

\begin{proof}[Proof of Proposition \ref{step2}]

Let $M$ be a mixed manifold admitting a frame efficient collection of surfaces $\mathbf{S}$. Let $\widetilde{\mathbf{S}}$ the collection of all elevations of surfaces in $\mathbf{S}$ to $\widetilde{M}$, the universal cover of $M$. Choose a Riemannian metric for $M$ and lift it to $\widetilde{M}$. Assume $\mathbf{S}$ is as in Proposition \ref{push}. Let $\widetilde{X}$ be the $CAT(0)$ cube complex dual to the wallspace $( \widetilde{M} , \widetilde{\mathbf{S}} )$.

Let $N_1, \ldots, N_k$ denote the graph manifold clusters of $M$. For each graph manifold cluster $N_i$, choose an elevation $\widetilde{N}_i$ in $\widetilde{M}$ and let $P_i = stab(\widetilde{N}_i)$. %Theorem \ref{relhyp} tells us $\pi_1M$ is hyperbolic relative to $\{ P_i \}$. For each $\widetilde{N}_i$, let $(\widetilde{M} , \widetilde{\mathbf{S}}_i)$ denote the $\widetilde{N}_i$-wallspace of $(\widetilde{M} , \widetilde{\mathbf{S}} )$ as defined in Definition \ref{hemi} and $C(\widetilde{N}_i)$ the dual cube complex of $(\widetilde{M} , \widetilde{\mathbf{S}}_i)$.

Przytycki-Wise have shown that Definition \ref{efficient}(1)-(5) imply that $\pi_1M$ acts freely and properly on $\widetilde{X}$, that $\widetilde{X} / \pi_1M$ is virtually special, and that $\pi_1M$ acts cocompactly relative to $\{ C(\widetilde{N}_i) \}$. (See Theorems 2.1 and 2.4 of \cite{pw13}.) Therefore, it remains only to show that each $P_i$ acts cocompactly on its respective $C(\widetilde{N}_i)$.

Let $R >0$ be a constant satisfying Lemma \ref{close} for $M$ and $\mathbf{S}$. Fix a choice of $i$. To show $P_i$ acts cocompactly $C(\widetilde{N}_i)$, we find $L > 0$ so that each collection of pairwise crossing walls in $\mathbf{\widetilde{S}}_i$ has an $L$-center in $\widetilde{N}_i$. Since $P_i$ acts cocompactly on $\widetilde{N}_i$, Lemma \ref{center} will then imply there are finitely many $P_i$-orbits of pairwise crossing walls. First we show each wall in $\mathbf{\widetilde{S}}_i$ intersects $\widetilde{N}_i$. If $\widetilde{S} \in \mathbf{\widetilde{S}}_i$, then $stab(\widetilde{S}) \cap P_i$ is infinite. If $S$ is the surface covered by $\widetilde{S}$, this implies $S$ either intersects $N_i$ or has an accidental parabolic homotopic into $N_i$. Since $\mathbf{S}$ satisfies Proposition \ref{push}, $S$ intersects $N_i$ in the latter case.

%Our goal is to show $\mathbf{\widetilde{S}}_i$ has finitely many $P_i$-orbits of pairwise crossing walls. First we show each wall in $\mathbf{\widetilde{S}}_i$ intersects $\widetilde{N}_i$. If $\widetilde{S} \in \mathbf{\widetilde{S}}_i$, then $stab(\widetilde{S}) \cap P_i$ is infinite. If $S$ is the surface covered by $\widetilde{S}$, this implies $S$ either intersects $N_i$ or has an accidental parabolic homotopic into $N_i$. Since $\mathbf{S}$ satisfies Proposition \ref{push}, $S$ intersects $N_i$ in the latter case.

Let $\mathcal{V} \subset \mathbf{\widetilde{S}}_i$ be a collection of walls which pairwise cross in $\widetilde{M}$. Now we show there is a Seifert fibered block of $\widetilde{N}_i$ intersecting every surface of $\mathcal{V}$. The JSJ-planes of $\widetilde{M}$ give it the structure of a tree of spaces. Let $\Gamma$ denote the tree dual to the collection of JSJ-planes of $\widetilde{M}$ in which there is a vertex for each block of $\widetilde{M}$ and an edge whenever two blocks are adjacent via a JSJ-plane. For each wall in $W \in\mathcal{V}$ consider the subtree of $\Gamma_W$ whose vertices corresponded to blocks intersecting that wall and edges to JSJ-planes intersecting that wall. These subtrees associated to walls in $\mathcal{V}$ pairwise intersect since the surfaces pairwise intersect in $\widetilde{M}$. Further, each of these subtrees intersects the subtree $\Gamma_{\widetilde{N}_i}$ consisting of all the blocks and JSJ-planes contained in $\widetilde{N}_i$. The Helly Property for trees states that if a collection of a subtrees in a tree pairwise intersect, then the total intersection of the collection of subtrees is nonempty. Therefore, $(\cap_{W \in \mathcal{V}} \Gamma_W) \cap \Gamma_{\widetilde{N}_i}$ is nonempty which implies there is a Seifert fibered block $\widetilde{B}$ of $\widetilde{N}_i$ intersecting every wall of $\mathcal{V}$.

 %and they all intersect the subtree corresponding to $\widetilde{N}'$. By the Helly property for trees, the total intersection of all these subtrees is nonempty. Therefore, there is a Seifert fibered block $\widetilde{B}$ of $\widetilde{N}_i$ intersecting every wall of $\mathcal{V}$.

Let $\mathcal{V}'$ denote the collection of all pieces in $\widetilde{B}$ of walls in $\mathcal{V}$. By our choice of $R > 0$, the pieces in $\mathcal{V}'$ are pairwise $R$-close.

We find an $\mathbb{R}$-fiber of $\widetilde{B}$ close to the vertical pieces of $\mathcal{V}'$ then a point on that fiber close to the horizontal pieces. Since $B$ is a Seifert fibered space with boundary, $\widetilde{B}$ has a product structure of the form $E \times \mathbb{R}$ respecting the $\mathbb{R}$-fibering of $\widetilde{B}$. If $N_i$ is thick, $E$ is a convex subset of $\mathbb{H}^2$. If $N_i$ is thin, $\widetilde{B} = \widetilde{N}_i$ and $E$ is an infinite strip. In either case, $E$ is $\delta$-hyperbolic. Projecting the vertical pieces of $\mathcal{V}'$ onto $E$ yields a collection of pairwise-close quasigeodesics in $E$. By Lemma \ref{HyperbolicCenter}, there is a constant $L_1 = L_{1}(B , \mathbf{S})$ and an $L_1$-center $y_0 \in E$ for these quasigeodesics.

Let $\ell = \{y_0\} \times \mathbb{R}$ be the $\mathbb{R}$-fiber $L_1$-close to the vertical pieces of $\mathcal{V}'$. The horizontal pieces are all disjoint so there is an upper bound $K = K(B, \mathbf{S} )$ on the size of any collection of pairwise $R$-close horizontal pieces. Choose a point $x_0 \in \ell$ lying on any horizontal piece in $\mathcal{V}'$. Let $f$ be the length of a regular fiber of $B$ and let $L_2 = fK$. Any horizontal piece in $\mathcal{V}'$ in $B$ is $L_2$-close to $x_0$.

Thus, $x_0$ is an $L_2$-center for $\mathcal{V}$. The constants we chose depended only on $B$, the Seifert fibered block covered by $\widetilde{B}$, and $\mathbf{S}$, so we can choose them uniformly. Therefore, $P_i$ acts cocompactly on $C(\widetilde{N}_i)$ by Lemma \ref{center}. This together with Theorem \ref{relcpt} implies $\pi_1M$ acts cocompactly on $\widetilde{X}$.
Therefore $X = \widetilde{X} / \pi_1M$ is virtually compact special.

\end{proof}

\section{Classification of virtually compact special 3-manifold groups} \label{finale}

We conclude by proving Theorem \ref{classification}, which gives a classification of virtually compact special 3-manifold groups in terms of their geometric structure.

\begin{proof}[Proof of Theorem \ref{classification}]

The case where $M$ is nongeometric follows from the main theorem and Theorems A and B of \cite{hp15}.

Wise proved $\pi_1M$ is virtually compact special when $M$ is a hyperbolic manifold with boundary in \cite{w12a}. Agol, Groves, and Manning proved $\pi_1M$ is virtually compact special when $M$ is a closed hyperbolic 3-manifold in \cite{ago13} building on \cite{w12a} and \cite{bw12}. If $M$ is a spherical manifold then $\pi_1M$ is finite and hence virtually trivial implying it is virtually compact special. For $M$ a sol manifold, then Hagen-Przytycki observed $\pi_1M$ is not virtually compact special. Indeed, since $\pi_1M$ is solvable but not virtually Abelian, the solvable subgroup theorem (See e.g. Theorem 7.8 in part II of \cite{bh99}.) implies $\pi_1M$ cannot act properly on a $CAT(0)$ cube complex and therefore is not virtually compact special. Technically, we could also consider sol manifolds as graph manifold with a nontrivial JSJ decomposition. Hagen-Przytycki \cite{hp15} exclude this case when studying graph manifolds but their results still hold since sol manifolds are not chargeless.

For the remaining cases we study their Seifert fibered structure. These cases are also discussed in \cite{hp15}. The argument is straightforward so we include it here.  Suppose $M$ is a Seifert fibered space with infinite fundamental group. If $M$ is closed and has a vanishing Euler number then it has a finite cover which is a product $F \times S^1$ where $F$ is a surface. It is well-known that the fundamental group of a such a manifold is virtually compact special. The geometries corresponding to a vanishing Euler number are $\mathbb{E}^3$, $\mathbb{H}^2 \times \mathbb{R}$, and $S^2 \times \mathbb{R}$. (See e.g. Table 1 in \cite{afw13}.) If $M$ has nonempty boundary then it is virtually the product of a surface with boundary and a circle and admits one of the three geometries above. 

If $M$ is a closed Seifert fibered space with a non-vanishing Euler number, then $M$ does not have a finite cover which is the product of a surface and a circle. Theorem 6.12 in Part II of \cite{bh99} states that if $\pi_1M$ were to act properly by isometries on a $CAT(0)$ space then there would be a finite index subgroup with the fiber subgroup as a direct factor. This would then implies (by e.g. Theorem 3.9 of \cite{awf15}) that $M$ has a finite cover which is the product of a surface and a circle. It follows that $\pi_1M$ cannot properly on a $CAT(0)$ cube complex if $M$ has non-vanishing Euler number. This excludes closed 3-manifolds which admit a $\widetilde{SL}(2, \mathbb{R})$ or nil geometric structure.

\end{proof}

\bibliographystyle{alpha}
\bibliography{TidmoreBib}

\end{document}